\DeclareMathAlphabet{\mathpzc}{OT1}{pzc}{m}{it}
\newtheorem{lemma}[equation]{Lemma}
\newtheorem{theorem}[equation]{Theorem}
\newtheorem{proposition}[equation]{Proposition}
\newtheorem*{corollary*}{Corollary}
\theoremstyle{definition}
\newtheorem{definition}[equation]{Definition}
\newtheorem{remark}[equation]{Remark}
\newtheorem{example}[equation]{Example}
\newtheorem{notation}[equation]{Notation}
\numberwithin{equation}{section}
\newcommand{\fm}{\mathfrak m}
\newcommand{\frs}{\mathfrak s}
\newcommand{\frt}{\mathfrak t}
\newcommand{\fru}{\mathfrak u}
\newcommand{\cP}{{\mathcal P}}
\newcommand{\cO}{{\mathcal O}}
\newcommand{\cI}{{\mathcal I}}
\newcommand{\mcG}{\mathcal{G}}
\newcommand{\N}{\mathbb{N}}
\newcommand{\R}{\mathbb{R}}
\newcommand{\D}{\mathbb{D}}
\newcommand{\PP}{\mathbb{P}}
\newcommand{\bfW}{\mathbf{W}}
\newcommand{\scH}{\mathscr{H}}
\newcommand{\scX}{\mathscr{X}}
\newcommand{\scC}{\mathscr{C}}
\newcommand{\scD}{\mathscr{D}}
\newcommand{\Control}{\mathsf{Crl}}
\newcommand{\Crl}{\mathsf{Crl}}
\newcommand{\A}{\mathsf{A}}
\newcommand{\Vect}{\mathsf{Vect}}
\newcommand{\RelVect}{\mathsf{RelVect}}
\newcommand{\LinRel}{\mathsf{LinRel}}
\newcommand{\Set} {\mathsf{Set}}
\newcommand{\FinSet}{\mathsf{FinSet}}
\newcommand{\sfC} {\mathsf{C}}
\newcommand{\Man} {\mathsf{Man}}
\newcommand{\Graph}{\mathsf{graph}}
\newcommand{\OS}{\mathsf{OS}}
\newcommand{\SSub}{\mathsf{SSub}}
\newcommand{\inp}{\mathrm{inp}}
\newcommand{\out}{\mathrm{out}}
\newcommand{\st}{\mathsf{st}}
\newcommand{\tot}{\mathsf{tot}}
\newcommand{\op}{\mathsf{op}}
\newcommand{\SSubi}{\mathsf{SSub}^\mathsf{int}}
\newcommand{\id}{\mathsf{id}}
\newcommand{\inv}{^{-1}}
\newcommand{\toto}{\rightrightarrows}
\newcommand{\grph}{\mathop \mathrm{graph}}
\DeclareMathOperator{\Hom}{\mathsf{Hom}}
\tikzset{
   tick/.style={postaction={
      decorate,
      decoration={markings, mark=at position 0.5 with {\draw[-] (0,.4ex) -- (0,-.4ex);}}}
   }
}
\newcommand{\tickar}{\begin{tikzcd}[baseline=-0.5ex,cramped,sep=small,ampersand replacement=\&]{}\ar[r,tick]\&{}\end{tikzcd}}
\newcommand{\ltickar}{\begin{tikzcd}[baseline=-0.5ex,cramped,sep=small,ampersand replacement=\&]{}\&\ar[l,tick]{}\end{tikzcd}}
\begin{document}

\title{Networks   of open 
  systems}

\author{Eugene Lerman}
\address{Department of Mathematics, University of Illinois at Urbana-Champaign,
1409 W. Green Street
Urbana, IL 61801}

\begin{abstract}
  Many systems of interest in science and engineering are made up of
  interacting subsystems.  These subsystems, in turn, could be made up
  of collections of smaller interacting subsystems and so on.  In a
  series of papers David Spivak with collaborators formalized these
  kinds of structures (systems of systems) as algebras over
  presentable colored operads \cite{Spivak}, \cite{RupSp}, \cite{VSL}.

  It is also very useful to consider maps between dynamical systems. 
  This is the point of view taken by DeVille and Lerman in the study
  of dynamics on networks \cite{DL1}, \cite{DL2}, \cite{DL3}.  The
  work of DeVille and Lerman was inspired by the coupled cell networks
  of Golubitsky, Stewart and their collaborators
  \cite{Stewart.Golubitsky.Pivato.03, Golubitsky.Stewart.Torok.05,
    Golubitsky.Stewart.06}.

  The goal of this paper is to describe an algebraic structure that
  encompasses both approaches to systems of systems.  More
  specifically we define a double category of open systems and
  construct a functor from this double category to the double category
  of vector spaces, linear maps and linear relations.  This allows us,
  on one hand, to build new  open systems out of collections of
  smaller open subsystems and on the  other to  keep track of maps between
  open systems.  Consequently we obtain synchrony results for open
  systems which generalize the synchrony results of Golubitsky,
  Stewart and their collaborators for groupoid invariant vector fields
  on coupled cell networks.
\end{abstract}

\maketitle 

\tableofcontents

\section{Introduction}

Many systems of interest in science and engineering are made up of
interacting subsystems.  These subsystems, in turn, could be made up
of collections of smaller interacting subsystems and so on.  These
kind of structures have been formalized by David Spivak with
collaborators as algebras over presentable colored operads
\cite{Spivak, RupSp, VSL}. There are several variants of these
operads; they depend on the kinds of systems one is interested in.
Since the subsystems are supposed to receive input from other
subsystems they are conveniently modeled as {\em open} (a.k.a.\ {\em
  control}) systems; we review open systems in Section~\ref{sec:open}.
Informally an open system is a dynamical system that receive inputs
from other systems.  There are several formal models of open systems
starting with collections of vector fields that depend on parameters.
In \cite{VSL} the input-state-output model is used.

One of the fundamental problems in the theory of (closed) dynamical
systems is finding or, failing that, proving the existence of
equilibria, periodic orbits and, more generally, other invariant
submanifolds. This amounts to finding/proving the existence of maps
between dynamical systems. For example, a map from a point to our
favorite closed system is an equilibrium, maps from circles are
periodic orbits, and so on. Thus it is highly desirable to have a
systematic way of constructing maps between dynamical systems. 

One can view a {\em network} as a pattern of interconnection of open
system.  In \cite{VSL} a network is formalized as a {\em morphism} in
the colored operad of wiring diagram --- the morphism encodes the
pattern.  In the work of DeVille and Lerman \cite{DL1,DL2, DL3}, which
was inspired by the coupled cell networks of Golubitsky, Stewart and
their collaborators \cite{Stewart.Golubitsky.Pivato.03,
  Golubitsky.Stewart.Torok.05, Golubitsky.Stewart.06}, networks are
encoded by directed graphs.  In contrast to \cite{VSL} networks in
\cite{DL1,DL2, DL3} are viewed as {\em objects} in a category, and the
main result  is a good notions of a {\em map}
between networks.   The notion leads to a combinatorial recipe for a
construction of maps of closed dynamical systems out of appropriate
maps of graphs.  We will show in this paper that the networks of \cite{DL1, DL2, DL3} can be viewed as
particular kinds of morphisms in a colored operad
(Proposition~\ref{prop:5.5}).  The morphisms of networks of \cite{DL1,
  DL2, DL3}, on the other hand, have no obvious interpretation in the
operadic language.

In this paper we generalize both approaches (directed graphs and
operadic) to networks of open systems.  This allows us, on one hand,
to build new large open systems out of collections of smaller open
subsystems and on the other hand keep track of maps between open systems.
Consequently we obtain synchrony results for open systems which
generalize the synchrony results of Golubitsky, Stewart and their
collaborators for groupoid invariant vector fields on coupled cell
networks (see for example \cite{Stewart.Golubitsky.Pivato.03,
  Golubitsky.Stewart.Torok.05, Golubitsky.Stewart.06}).

Networks of open systems as such are not new. For example, networks of
open systems are implicit in the work of Field \cite{Field}.  They are
also implicit in the work of Golubitsky, Pivato, Torok and Stewart
\cite{Stewart.Golubitsky.Pivato.03, Golubitsky.Stewart.Torok.05} and
their collaborators.  Special cases of networks of open systems
present in the coupled cell network formalism were made explicit in
\cite{DL1, DL2, DL3}.
Maps between open systems are not new either.  For example the
category of open systems has been explicitly introduced by Tabuada
and Pappas \cite{TP}.  

What is new in this paper is a general notion of maps between networks
(Definition~\ref{def:map_of_networks}) and a dynamical/control system
interpretation of these maps (Theorem~\ref{thm:main}).  We frame this
notion in terms of double categories.  In particular the results of
this paper subsume and extend the results of \cite{DL1, DL2, DL3}, as
we explain in
Section~\ref{sec:networks}.\\[-2pt]

The paper assumes that the reader is comfortable with viewing
continuous time dynamical systems as vector fields on manifolds.  By
necessity the paper also uses a certain amount of category theory,
which we try to keep down to a minimum.  We expect the reader to be
comfortable with the universal properties of products and coproducts
and have a nodding acquaintance with 2 categories, but not much more
than that.  Some of the results of the paper are expressed in the
language of symmetric monoidal categories and the corresponding
colored operads.  A reader who may be
unfamiliar with monoidal categories may safely skip the corresponding
parts of the paper.  We also use the language of (strict) double
categories.  Since double categories are somewhat less common, we do
not expect any familiarity with them on the part of the reader.
Strict double categories are reviewed in Section~\ref{sec:double}.

\subsection*{Organization of the paper}
We start by recalling the definition of an open system
(Definition~\ref{def:open_sys}) and reviewing the category of open
systems of Tabuada and Pappas \cite{TP}.  We then constructing a
symmetric monoidal category $(\SSubi)^\op$ whose objects are
surjective submersions.  In a coordinate-free approach to control
theory a surjective submersion $a$ gives rise to a vector space of
$\Crl(a)$ of control (i.e., open) systems.  We extend the assignment
\[
a \mapsto \Crl(a)
\]
to a morphism of symmetric monoidal categories
\[
\Crl: (\SSubi)^\op \to \Vect,
\]
where $\Vect$ is the 
category of real vector spaces and linear maps with the monoidal
product being given by direct sum $\oplus$. 

Recall that a symmetric monoidal category $\A$ defines a colored
operad $\cO(\A)$.  We interpret a morphism in the operad $\cO
((\SSubi)^\op)$ as a {\em pattern of interconnection of open systems}
and think of it as a {\em network} of open systems.  The monoidal
functor $\Crl$ turns the colored operad $\cO(\Vect, \oplus)$ into an
algebra over the operad $\cO((\SSubi)^\op)$
(Section~\ref{sec:operad}).  

In Section~\ref{sec:lists} we review the category of lists
$\FinSet/\scC$ in a category $\scC$.  The objects of $\FinSet/\scC$
are finite unordered lists of objects of $\scC$.  This is done to
facilitate the comparison of the operad $\cO ((\SSubi)^\op)$ with the
operad of wiring diagrams of \cite{VSL}.  There are also other reasons
for introducing the categories of lists that will become apparent later.
We then revisit the algebra $\cO\Crl: \cO((\SSubi)^\op)\to \cO\Vect$
introduced earlier in Subsection~\ref{subsec:5.2}.

We carry out the comparison of the operad
$\cO((\SSubi)^\op)$ with the operad of wiring diagrams in
Section~\ref{sec:WD}.  The main difference between the two operads and
their respective algebras is philosophical. Namely, the approach of
\cite{VSL} is to treat an open system as a black box --- the space of
internal states is completely unknown while the algebra supplies all
possible choices of internal state spaces.  By contrast in this paper
we treat the space of internal states (and the total space) as known
and have the algebra supply the possible choices of dynamics on a
given total space.

The next two sections are technical. The main result of
Section~\ref{sec:7} is Lemma~\ref{thm:5.9}.  This lemma, in effect, is
half of the proof of the main theorem of the paper,
Theorem~\ref{thm:main}.  The results of Section~\ref{sec:7} are used
to motivate the introduction of double categories, which is carried
out in Section~\ref{sec:double}.  The two main results of
Section~\ref{sec:double} are Lemma~\ref{thm:5.9mrk2} (which is a
reformulation of Lemma~\ref{thm:5.9} in terms of double categories)
and Lemma~\ref{lemma:crl_ssub_relvect}.

Finally in
Section~\ref{sec:net_open} we introduce our notion of maps between
networks (Definition~\ref{def:map_of_networks}) and interpret it in
terms of maps of open systems (Theorem~\ref{thm:main}).  In
Section~\ref{sec:networks} we show that the networks of \cite{DL1}
(hence the coupled cell networks of Golubitsky, Stewart et al.) are a
special case of the networks in the sense of
Definition~\ref{def:network}. We then show that Theorem~3 of
\cite{DL1} (which is the main result of that paper) is an easy
consequence of Theorem~\ref{thm:main}.

\subsection*{Acknowledgments} I thank Tobias Fritz, Joachim Kock and
John Baez for a number of helpful comments.   

This paper started out as a joint project with David Spivak.  An
earlier version of the paper is \cite{LS}.

\section{Open systems} \label{sec:open}
In this section we  define open/control systems and maps
between them. We then construct the functor $\Crl$ which assigns to a
surjective submersion the vector space of all control systems
supported by the submersion.  The tricky part is figuring out the
target category of $\Crl$.\\

Informally an {\em open} (a.k.a. a {\em control}) system is a
dynamical system that receive inputs from other systems.  There are
several formal models of open systems.  The simplest has the following
form.  Fix a manifold $M$ of internal states of the system and another
manifold $U$ (the space of parameters).  An open system is a map (of
an appropriate regularity)
\[
R: U\to \scX (M),
\]
where $\scX(M)$ is a space of vector fields on $M$.  Any map
$R:U\to \scX (M)$ corresponds to a map 
\[
\hat{R}:U\times M\to TM
\]
with the property that 
\begin{equation}\label{eq:2.1}
\hat{R}(u,m)\in T_mM\quad\textrm{ for all } (u,m)\in U\times M.
\end{equation}
Equation \eqref{eq:2.1} is equivalent to the commutativity of the  diagram
\begin{equation}\label{eq:2.2}
\xy 
(-11, 6)*+{M\times U}="1"; 
(6, 6)*+{TM} ="2"; 
(6,-5)*+{M}="3"; 
{\ar@{->}_{ p} "1";"3"}; 
{\ar@{->}^{\,\,\hat{R}} "1";"2"}; 
{\ar@{->}^{\pi} "2";"3"}; 
\endxy 
\end{equation}
where $p:M\times U\to M$ is the projection on the first factor and
$\pi:TM\to M$ is the canonical projection from the tangent bundle of
$M$ to its base.  We think of the manifold $M\times U$ as the total
space of the open system with the factor $U$ representing the space of
inputs or of control variables (we use the words ``inputs'' and ``controls'' interchangeably).  However in many control systems of interests the
factorization of the total space into internal states and inputs is
not natural.  For this reason we adopt a somewhat more general
definition of a open/control system.  Note that the map $p$ in
\eqref{eq:2.2} is a surjective submersion.  Following Brockett
\cite{Brockett} and Tabuada and Pappas \cite{TP} we define a
(continuous-time) open system as follows:
\begin{definition}[open system] \label{def:open_sys} A continuous time
  {\sf open system} on a surjective submersion $p: Q\to M$ is a smooth
  map $ F: Q \to TM$ so that $ F(q) \in T_{p(q)} M$ for all $q\in Q
  $. That is, the following diagram commutes:
\begin{equation}\label{eq:3.2}
 \xy 
(-10, 6)*+{Q}="1"; 
(6, 6)*+{TM} ="2"; 
(6,-5)*+{M}="3"; 
{\ar@{->}_{ p} "1";"3"}; 
{\ar@{->}^{F} "1";"2"}; 
{\ar@{->}^{\pi} "2";"3"}; 
\endxy \qquad.
\end{equation}
Thus an {\sf open system} (or a {\sf control system}) is a pair
$(Q\xrightarrow{p}M, F)$ where $p$ is a surjective submersion and
$F: Q\to TM$ is a smooth map satisfying \eqref{eq:3.2}.  We refer
to the manifold $Q$ as the {\sf total space} and of the
manifold $M$ as the {\sf state space.  }
\end{definition}

\begin{remark}\label{rem:fixed_input_space}
  In the case when  $Q= M\times U$ for some manifold $U$  and the surjective submersion $p: M\times U \to M$
  is the projection on the first factor, we  think of $U$ as the
  space of input variables and say that the open system $F:M\times
  U\to TM$ is an {\sf open system with a choice of factorization of
    the total space into inputs and states}.

  Note that in general even if a surjective submersion $p:Q\to M$ is a
  trivial fiber bundle with a typical fiber $U$ there may not be a preferred
  choice of a factorization $Q\xrightarrow{\simeq} M\times U$.  The
  lack of natural factorization of variables of open systems into
  inputs and states 
  has been emphasized by Willems
  \cite{Willems}.
\end{remark}

\begin{remark}
Fix a surjective submersion $p:Q\to M$.   The set
\begin{equation}
\Crl(Q\xrightarrow{p} M):= \{F: Q\to TM \mid \pi\circ F = p\},
\end{equation}
of all control systems for the given submersion $p$ has the structure
of an infinite dimensional real vector space.
\end{remark}
\begin{remark}
  If the surjective submersion $Q\to M$ is the identity map $\id: M\to
  M$ then the space of open systems $\Control(M\xrightarrow{\id}M)$ is
  the space $\scX(M)$ of vector fields on the manifold $M$. Thus
  closed systems (i.e., vector fields) can be thought of as open
  systems whose space of inputs is a point (provided we suppress the
  diffeomorphism $M\times \{*\} \xrightarrow{\,\simeq \,} M$.)
\end{remark}

Just as vector fields have trajectories, so do open systems.
\begin{definition} A {\sf trajectory} of an open system
  $(Q\xrightarrow{p} M, F:Q\to TM)$ is a curve $\gamma:(a,b)\to Q$ so that 
\[
 \frac{d}{dt} (p\circ \gamma)\, (t) = F(\gamma (t)).
\]
for all $t$ in the open interval $(a,b)$.
\end{definition}

\begin{remark}\label{rmrk:2.10} \mbox{}\\
\begin{itemize}
\item If $Q= M$ and $p$ is the identity map, the definition above
  reduces to the definition of an integral curve of a vector field.
\item if $Q= M\times U$ and $p:M\times U\to M$ is the projection on
  the first factor then a trajectory $\gamma$ of an open system $F:M\times U\to
  TM$ is of the  form 
\[
\gamma(t) = (x(t), u(t))
\] 
with
\[
 \frac{d}{dt} x(t)  = F(x(t), u(t)).
\]
Such a definition of a trajectory is very common in the control theory
literature.
\end{itemize}
\end{remark}

Recall that a vector field $X$ on a manifold $M$ is {\sf $f$-related}  to a vector field $Y$ on a manifold $M$ (where $f:M\to N$ is a smooth map of manifolds) if \begin{equation}\label{eqn:intertwine}
Tf\circ X = Y\circ f.
\end{equation}
Here and elsewhere in the paper $Tf: TM\to TN$ denotes the
differential of $f$.  Equivalently the diagram
\begin{equation}\label{eqn:CDs}
\xy 
(-10, 6)*+{M}="1"; 
(6, 6)*+{TM} ="2"; 
(-10, -6)*+{N}="3"; 
(6,-6)*+{TN.}="4"; 
 {\ar@{->}_{f} "1";"3"}; 
{\ar@{->}^{X} "1";"2"}; 
{\ar@{->}^{Tf} "2";"4"}; 
 {\ar@{->}_{Y} "3";"4"}; 
\endxy
\end{equation} 
commutes.  Recall also that if $X\in \scX(M)$ is
$M\xrightarrow{f}N$-related to $Y\in \scX(N)$ and $\gamma$ is a
trajectory (i.e., an integral curve) of $X$ then $f\circ \gamma$ is an
integral curve of $Y$.   

The analogues results holds for open systems.  To state it we need to
first recall the notion of a map of submersions and then the notion of
morphism of open systems, where we follow \cite{TP}.
\begin{definition}\label{def:mor_ssub}
 A {\sf morphism} $f$
  from a submersion $p:Q\to M$ to a submersion $p': Q'\to
  M'$ is a pair of maps $f_{\tot}: Q\to Q'$, $f_{\st}: M\to
  M'$ so that the following square commutes:
\[
\begin{tikzcd}
Q\ar[r,"f_\tot"]\ar[d,"p"']&Q'\ar[d,"p'"]\\
M\ar[r,"f_\st"']&M'
\end{tikzcd}
\]
\end{definition}

\begin{definition}[The category $\SSub$ of surjective submersions]
  Definition~\ref{def:mor_ssub} allows us to turns the collection of
  surjective submersions into a category. We denote it by $\SSub$.
  Explicitly an object $a$ of the category $\SSub$ is surjective
  submersion 
\[
a= (a_\tot \xrightarrow{p_a} a_\st) 
\] 
(Here and elsewhere in the paper $a_\tot$ stands for the total space
of $a$, $a_\st$ for the state space and $p_a$ is the submersions).  A
morphism $a\xrightarrow{f} b$ in the category $\SSub$ is a map of
submersions. That is, it is a pair of smooth maps $(f_\tot, f_\st)$ so
that $p_b\circ f_\tot = f_\st \circ p_a$.
\end{definition}
\begin{notation}\label{note:2.15}
  We will use two types of notation for surjective submersions: $a=
  (a_\tot \xrightarrow{p_a} a_\st)$ and $Q\xrightarrow{p} M$. The usage
  will depend on convenience.
\end{notation}

\begin{remark}
  Surjective submersions are also known as {\sf fibered manifolds}.
  The term ``fibered manifold'' goes back to Seifert and Whitney and
  has been in use  since the early 1930's.
\end{remark}

\begin{definition}[cf. \protect{\cite[Definition~4.1]{TP}}]
\label{def:dyn_morph} 
A {\sf morphism}
  from an open system $(Q\to M, F:Q\to TM)$ to an open system $(Q'\to M', F':Q'\to TM')$
  is a morphism of submersions $f= (f_{\tot}, f_{\st}):(Q\to M)
  \to (Q'\to M')$ for which the following diagram commutes:
\[
\begin{tikzcd}
Q\ar[r,"f_\tot"]\ar[d,"F"']&Q'\ar[d,"F'"]\\
TM\ar[r,"Tf_\st"']&TM'
\end{tikzcd}
\]
In this case, we say that the open systems $(Q\to M,F)$ and $(Q'\to
M',F')$ are {\em $f$-related}.
\end{definition}

\begin{remark}
It is easy to see that if $f:(Q\to M, F) \to (Q'\to M')$ is a 
morphism of an open systems and $\gamma$ is a trajectory of $F$ then
$f\circ \gamma$ is a trajectory of $F'$.
\end{remark}

We can now in position to recall the definition of the category $\OS$
of open systems (it is called $\mathsf{Con}$ for {\em control} in \cite{TP}).

\begin{definition}[The category $\OS$] The objects of the
  category $\OS$ of open systems are
  open systems $(Q\to M,F)$ as in Definition~\ref{def:open_sys}.
  Morphisms of $\OS$ are morphisms of open systems as in 
  Definition~\ref{def:dyn_morph}.
\end{definition}

\begin{remark}
The categories $\SSub$ of surjective submersions and the category
$\OS$ of open systems have finite products.  The product of two
surjective submersions $Q_i\xrightarrow{p_i} M_i$, $i=1,2$, is the
submersion
\[
p_1\times p_2:Q_1\times Q_2\to M_1\times M_2, \qquad
(p_1\times p_2)\, (q_1, q_2) = (p_1(q_1), p_2(q_2)).
\]
The product of two open systems $(Q_i\xrightarrow{p_i} M_i, F_i:Q_i
\to TM_i)$, $i=1,2$, is the open system
\[
(Q_1\times Q_2\xrightarrow{p_1\times p_2} M_1\times M_2,\quad F_1
\times F_2:Q_1 \times Q_2 \to TM_1\times TM_2 \simeq T(M_1\times
M_2)),
\]
where
\[
 (F_1 \times F_2)\, (q_1, q_2) = (F_1(q_1), F_2(q_2))
\]
for all $(q_1, q_2)\in Q_1\times Q_2$. 
\end{remark}

\begin{remark}
  We have the evident forgetful functor $u: \OS \to \SSub$ from open
  systems to submersions that forgets the dynamics.  That is, on
  objects the functor $u$ is given by
\[
u(a, F) = a.
\]
Note that the functor $u$ preserves finite products. 
\end{remark}

For a surjective submersion $a$ the fiber $u\inv
(a)$ of the functor $u: \OS \to \SSub$ is (isomorphic to) the space
$\Crl(a)$.  This suggest that the assignment which
sends a surjective submersion $a$ to the space of open systems $\Crl(a)$
should extend to a functor from the category $\SSub$ to some category.
The objects of the target category should be real vector spaces since
$\Crl(a)$ is a real vector space.  But what are the morphisms?  A map
$f:a\to b$ between two submersions does not in
general give rise to a linear map from $\Crl(a)$ to $\Crl(b)$.  
However a morphism of
submersions $f:a\to b$ defines a linear
relation
\begin{equation}
\Crl(f):=\{(G, F)\in \Crl(b)\times \Crl(a) \mid 
F, G \textrm{ are }f\textrm{-related}\}.
\end{equation}
This suggests that the assignment $a\mapsto \Crl(a)$ extends to a
functor from the category $\SSub$ of submersions to the category whose
objects are vector spaces and morphisms are linear relations.  This is
not quite correct.  If $f:a\to b$ and $g:b\to c$ are two morphisms
of submersions then it easy to see that
\[
\Crl(g)\circ \Crl(f) \subset \Crl(g\circ f).
\]
However in general there is no reason for the inclusion to be an
equality of linear subspaces. In fact  the inclusion can be strict (see
Example~\ref{ex:2.25} below).
Thus our best hope is to make $\Crl$ into a lax 2-functor with the
target 2-category
$\mathsf{LinRel}$ of vector spaces, linear relations and inclusions.  
We now proceed to formally define the 2-category $\LinRel$.

\begin{definition}[The 2-category $\LinRel$ of real vector spaces,
  linear relations and inclusions]\label{def:lin-rel}
  The objects of the category $\LinRel$ are (real) vector spaces. A
  1-morphism from a vector space $V$ to a vector space $W$ is a subspace
  $R\subset W\times V$. We write $R:V\tickar W$ and say that $R$ is a
 {\sf linear relation from $V$ to $W$}. 

Given two linear relations $S\subset Z\times Y$ and
  $R\subset  Y\times X$ we define their composite $S\circ R$ to be the
  linear relation
\[
S\circ R :=\{(z,x) \in Z\times X\mid \textrm{ there exist } (z,y) \in S 
\textrm{ and }
(y',x) \in R\textrm{ with } y=y'\}.
\]
It is easy to see that the composition of relations is associative;
hence vector spaces and linear relations form a category.  

A 2-morphism in $\LinRel$ from a relation $R:V\tickar W$ to a relation
$S: V\tickar W$ is a linear inclusion $R\hookrightarrow S$.  We define
the vertical composition of an inclusion $R_1\subset R_2 \subset
Y\times X$ followed by the inclusion $R_2 \subset R_3 \subset Y\times
X$ to be the inclusion $R_1\subset R_3$.  It is easy to check that if
$S\subset S'\subset Z\times Y$ and $R\subset R' \subset Y\times X$ are
two pairs of inclusions of linear relations then
\[
S\circ R\subset S'\circ R'.
\]
Consequently vector spaces, linear relations and inclusions
form  a (strict) 2-category.   We denote it by $\LinRel$.
\end{definition}

\begin{remark}\label{rmrk:2.24}
  There is a functor $\Graph$ from the category $\Vect$ of
  vector spaces and linear maps to the underlying 1-category of
  $\LinRel$.  The functor does nothing on objects and is defined on
  arrows by
\[
\Graph(W\xleftarrow{T}V):= \{(w,v)\in W\times V \mid w= T(v)\}.
\]
\mbox{} \hfill  $\Box$
\end{remark}

Given two composible morphisms of submersions $f:a\to b$ and $g:b\to
c$ it is easy to see that
\[
\Crl(g)\circ \Crl(f) \subset \Crl(g\circ f).
\]
In general there is no reason for the inclusion to be an equality of
linear subspaces.  Hence $\Crl$ is a lax 2-functor.  Here is an
example.

\begin{example} \label{ex:2.25}
Recall that an open system on a submersion of the form
  $id_M:M\to M$ is a vector field on the manifold $M$.  Now consider
  the pair of embeddings
\[
f:  \R \hookrightarrow \R^2, \qquad f(x) = (x,0)
\]
and
\[
g: \R^2\hookrightarrow \R^3, \qquad g(x,y) = (x,y,0).
\]
The constant vector field $\frac{d}{dx}$ on $\R$ is $(g\circ
f)$-related to any vector field $v$ on $\R^3$ with $v(x,0,0) =
\frac{\partial}{\partial x}$. However, such a vector field $v$ need
not be tangent to the $xy$-plane and thus need not be $g$-related to
any vector field on $\R^2$.  Thus in this case
\[
\Crl(g)\circ \Crl(f) \subsetneq \Crl(g\circ f).
\]
\end{example}

\begin{remark} The lax functor $\Crl:\SSub\to\LinRel$ carries all the
  essential information of the forgetful functor $u:\OS\to\SSub$. That is,
  for each object $a\in\SSub$, the fiber
  $u\inv (a)$ is isomorphic to $\Crl(a)$, and to each morphism $f: a\to
  b$ we can associate the linear relation
  $\Crl(f)$. Thus the functor $u$ is in some ways akin to the
  Grothendieck construction of $\Crl$, but $u$ is not a fibration of
  categories.
\end{remark}

\section{Interconnections and networks} 

We take the point of view that a {\em network} is a pattern of
interconnection of a collection of open systems.  The goal of this
section is to make the previous sentence precise. The idea is to start
with a finite unordered list of surjective submersions.  Formally such
a list is a map $\tau:X\to \SSub$, where $X$ is a finite set and
$\SSub$ is the category of surjective submersions defined above. That
is, $\tau(x)$ is a surjective submersion for every $x\in X$. A pattern
of interconnection is then an appropriate map of surjective
submersions $\psi:a\to \prod_{x\in X} \tau(x)$.  To explain what maps
are appropriate and the intuition behind this definition we start with two
examples.

\begin{example}\label{ex:3.1}
  Let $M, U, V$ be manifolds.  Then the projection on the first factor
  \[
M\times U\times V\to M 
\]
is a surjective submersions.  Consider an
  open system $F:M\times U\times V\to TM$.  Let $\phi: M\to V$ be a
  smooth map. Then the map $G: M\times U\to TM$ defined by
\[
G(m,u) = F(m, u,\phi (m))
\]
is an open system on the submersion $M\times U\to M$.

Note also that 
\[
G = F\circ \varphi,
\]
where $\varphi:M\times U\to M\times U\times V$ is given by
\[
\varphi(m,u) = (m, u, \phi(m)).
\]
We therefore view $\varphi$ as defining a pattern of interconnection of
opens systems.  Namely $\varphi$ gives rise to the linear map
\[
\varphi^*: \Control(M\times U\times V \to M)\to \Control(M\times U\to M),
\qquad \varphi^*F:= F\circ \varphi.
\]
\end{example}

\begin{example}\label{ex:3.2}
  Example~\ref{ex:3.1} above gives us a way to view a vector field on a product
  of two manifold as a pair of interconnected open systems, that is,
  as a network.  Namely let $M, N$ be two manifolds and let $X:
  M\times N\to TM\times TN$ be a vector field on their product.  Then
  $X$ is of the form
\[
X(m,n) = (F(m,n), G(n,m))
\]
where $F:M\times N\to TM$ and $G:N\times M\to TN$ are the appropriate
smooth maps.  In fact it is easy to see that $F$ and $G$ are open
systems.  Moreover it should be intuitively clear that $X$ is obtained
from $F$ and $G$ by plugging the states of the open system $F$ into
the inputs of the open system $G$ and the states of $G$ into the
inputs of $F$.  More precisely consider the product open system
\[
F\times G: (M\times N)\times (N\times M) \to TM\times TN, \quad
((m,n),(u,v))\mapsto (F(m,u), G(n,v)) .
\]
Then
\[
X  = (F\times G)\circ \varphi, 
\]
where 
\[
\varphi: M\times N\to (M\times N)\times (N\times M)
\]
is defined by 
\[
 \varphi(m,n) := ((m,n), (n,m)).
\]
\mbox{}\hfill $\Box$
\end{example}

The notion of an interconnection map informally introduced above
easily generalizes to maps between more general submersions.
\begin{definition}\label{def:intercon}
  A morphism $\varphi= (\varphi_\tot, \varphi_\st):
  a \to b$ between two
  submersions is a {\sf interconnection morphism} if $\varphi_\st$ is
  a diffeomorphism.
\end{definition}

\begin{remark}
  We are interested in interconnection morphisms $\varphi$
  with the property that $\varphi_\st$ is the identity map.  We fear,
  however, that {\em requiring} $\varphi_\st$ to be the identity outright
  may cause trouble.
\end{remark}
\begin{remark}
  It may be that Definition~\ref{def:intercon} of an ``interconnection
  map" is a bit too general. For example, one may want to additionally
  insist that $\varphi_\tot:a_\tot\to b_\tot$ is an embedding, though
  we will not need this restriction in what follows.  The additional
  requirement that $\varphi_\tot$ is an embedding would capture the
  idea that after plugging states into inputs the total space of the
  systems (inputs and states) should be smaller.
\end{remark}

\begin{remark} \label{rmrk:3.6} An interconnection morphism
  $\varphi:a\to b$ of Definition~\ref{def:intercon} gives rise to a
  linear map
\[
\varphi^*: \Crl(b) \to \Crl(a).
\]
It is given by
\[
\varphi^*F:= T(\varphi_\st)\inv \circ F \circ \varphi_\tot.
\]
Note that 
\[
\Crl(\varphi) =\{(\varphi^*F, F) \mid 
F\in \Crl(b)\} = \Graph(\varphi^*)
\]
(q.v.\ Remark~\ref{rmrk:2.24}).
\end{remark}

\begin{definition}
[The category $\SSubi$ of submersions and interconnection maps]
  The collection of interconnection maps is closed under compositions.
  Consequently the collections of surjective submersions and
  interconnection maps forms a subcategory of the category $\SSub$.  We
  denote it by $\SSubi$.
\end{definition}

\begin{remark}\label{rmrk:SSub_as_double}
  The subcategory $\SSubi$ of $\SSub$ has the same objects as the
  category $\SSub$.  For any two composible morphisms
  $c\xleftarrow{\psi}b \xleftarrow{\varphi}a$ of $\SSubi$ we have
\[
(\psi\circ \varphi)^* = \varphi^*\circ \psi^*.
\]
Therefore we have another way to extend the assignment
\[
  \SSub \ni a\mapsto \Crl(a)
\]
to a functor.  Namely we have an evident  functor 
\[
(\SSubi)^\op \to \Vect,
\]
which is defined on arrows by 
\begin{equation}\label{eq:3.9}
(b\xleftarrow{\psi}a)\mapsto (\Crl(b)\xrightarrow{\psi^*} \Crl(a))
\end{equation}
(see Remark~\ref{rmrk:3.6}).
At the risk of causing a temporary confusion we will also denote this
functor by $\Crl$.  We will see later in the paper
(Lemma~\ref{lemma:crl_ssub_relvect}) that the functors
\[
\Crl:(\SSubi)^\op \to \Vect\qquad\textrm{and}\qquad
\Crl:\SSub \to \LinRel
\]
are components of a single morphism of double categories.
\end{remark}
We are now in position to define a network of open systems.

\begin{definition} \label{def:network}
  A {\sf network of open systems} is an unordered list of
  submersions $\tau:X\to \SSub$ indexed by a finite set $X$ together
  with an interconnection morphism $\psi:b\to \prod_{x\in X} \tau(x)$.
\end{definition}

\begin{example} \label{ex:3.11} 
Examples~\ref{ex:3.1} and \ref{ex:3.2} are both
  examples of networks of open systems in the sense of
  Definition~\ref{def:network}.

  In Example~\ref{ex:3.1} the ``list'' of submersions consists of the
  single submersion $M\times U\times V \to M$, which we can think of a
  map $\tau:\{*\}\to \SSub$ with $\tau(*) = (M\times U\times V \to
  M)$.  The submersion $b$ is $ (M\times U\to M)$.  The interconnection
  map $\varphi:b\to \tau(*)$ is defined by $\varphi_{st} = id_M$, 
\[
\varphi_{tot}:M\times U\to M\times U\times V,\qquad \varphi_{tot}(m,u)
= (m,u,\phi(m)).
\]
In Example~\ref{ex:3.2} the list of submersions is the map
$\tau:\{1,2\}\to\SSub$ with
\[
\tau(1) = (M\times N\to M),\qquad \tau(2) = (N\times M \to M).
\]
The submersion $b$ is $id:M\times N\to M\times N$.
The interconnection map $\varphi:b\to \tau(1)\times \tau(2)$ is defined by 
$\varphi_{\st} = id_{M\times N}$ and 
\[
\varphi_{\tot}:M\times N\to (M\times N)\times (N\times M),\qquad
\varphi_{\tot}(m,n) := ((m,n), (n,m)).
\]

\end{example}

\begin{remark}[Networks as patterns of
  interconnections]\label{rmrk:net_pattern}
A network is a pattern of interconnection of open systems in the
following sense.  Let $a_1,\ldots, a_n$ be a collection of surjective
submersions (so here $X=\{1, \ldots, n\}$ and $\tau:\{1, \ldots, n\} \to
  \SSub$ is given by $\tau(i) = a_i$). Let $\psi:b\to \prod_{i=1}^n
  a_i$ be an interconnection morphism.  Then given a collection of
  open systems $\{F_i\in \Crl(a_i)\}_{i=1}^n$ we get an open system
  $F\in \Crl(b)$ which is defined by
\begin{equation}\label{eq:3.10}
F = \psi^* (F_1\times \cdots \times F_n).
\end{equation}
We will say more about networks as patterns in the next section.
\end{remark}

We finish the section with one more example of a network.  We'll come
back to this example in Example~\ref{ex:9.1} and Example~\ref{ex:5.9}.

\begin{example}\label{ex:3.13}  
  Let $M, U$ be two smooth manifolds, $p:M\times U\to M$ a trivial
  fiber bundle and $\phi:M\to U$ a smooth map.  Let $X$ be a three
  element set, $X=\{1, 2, 3\}$, and $\tau:X\to \SSub$ the constant map
  with $\tau(1)= \tau(20= \tau(3) = (p:M\times U\to M).$ We choose $b$
  to be the trivial submersion $\id_{M^3}:M^3 \to M^3$ and
\[
\psi: b\to \prod_{x\in X}\tau(x) \simeq ((M\times U)^3 \to M^3)
\]
to be the interconnection morphism with $\psi_\tot$ given by
\[
\psi_\tot(m_1, m_2, m_3)= ((m_1, \phi(m_2)), (m_2, \phi(m_1)),(m_3, \phi(m_2))).
\]
Then $(\tau:X\to \SSub, \psi:b\to \prod_{x\in X}\tau(x))$ is a network
in the sense of Definition~\ref{def:network}.

Note that for any three open systems $F_1, F_2, F_3 \in \Crl(M\times
U\to M)$ the system
\[
F =\psi^* (F_1 \times F_2\times F_3)
\]
is a vector field on $M^3$ given by
\[
F(m_1, m_2, m_3)= (F_1(m_1, \phi(m_2))F_2(m_2, \phi(m_1))F_3(m_3, \phi(m_2))).
\]
\end{example}

\section{Networks as morphisms in a colored operad}
\label{sec:operad}

We now give networks in the sense of Definition~\ref{def:network} an
operadic interpretation.  Recall that if $\sfC$ is a symmetric
monoidal category with the monoidal product $\otimes$ the
corresponding (representable colored) operad $\cO\sfC$ has the same
objects as $\sfC$.  For any objects $a_1,\ldots, a_n, b$ of $\sfC$ the
set $\Hom_{\cO\sfC}(a_1,\ldots,a_n:b)$ of morphisms in the operad
$\cO\sfC$ from $a_1,\ldots, a_n$ to $b$ is defined to be $\Hom_\sfC
(a_1\otimes\cdots \otimes a_n, b)$:
\[
\Hom_{\cO\sfC}(a_1,\ldots,a_n:b):= \Hom_\sfC (a_1\otimes\cdots \otimes a_n, b).
\]
Since $\SSub$ has finite products, it is a Cartesian symmetric monoidal
category.  It is easy to see that the product of two interconnection
morphisms is again an interconnection morphisms.  Consequently
$\SSubi$ inherits from $\SSub$ the structure of a symmetric monoidal
category.\footnote{Note that $\SSubi$ is not Cartesian.  The issue
  is that if $a_1$, $a_2$ are two surjective submersions then the two
  projection maps $\pi_i:a_1\times a_2 \to a_i$ ($i=1,2$) are not (in
  general) morphisms in $\SSubi$, so $\SSubi$ does not have products.}
Consequently the opposite category $(\SSubi)^\op$ is also symmetric
monoidal.  Now for any $n+1$ objects $a_1,\ldots, a_n, b\in \SSubi$ a
morphism in the operad $\cO( (\SSubi)^\op)$ is exactly a network in
the sense of Definition~\ref{def:network}.

We next interpret \eqref{eq:3.10} in terms of an algebra over the
operad $\cO((\SSubi)^\op)$.  In order to carry this out we view the category
$\Vect$ of (real) vector spaces as a symmetric monoidal category with
the monoidal product being the direct sum $\oplus$.%  

\begin{lemma}\label{lem:crl_lax}
The functor 
\[
\Crl: ((\SSubi)^\op, \times) \to (\Vect, \oplus) 
\]
defined by \eqref{eq:3.9} is a lax monoidal functor.
\end{lemma}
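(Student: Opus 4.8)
The plan is to verify the two pieces of data required for a lax monoidal functor---the structure morphisms---and then check the coherence axioms, which I expect to be routine bookkeeping. Recall that a lax monoidal functor $(\Crl, \mu, \epsilon)$ from $((\SSubi)^\op, \times)$ to $(\Vect, \oplus)$ consists of the functor $\Crl$ itself (already constructed in Remark~\ref{rmrk:SSub_as_double} via \eqref{eq:3.9}), a natural transformation $\mu_{a,b}\colon \Crl(a)\oplus\Crl(b)\to \Crl(a\times b)$, and a morphism $\epsilon\colon 0 \to \Crl(\mathbf{1})$ where $\mathbf{1}$ is the monoidal unit of $\SSubi$, i.e.\ the submersion $\id\colon \{*\}\to\{*\}$. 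Since $\Crl(\mathbf 1)=\scX(\{*\})=0$, the unit morphism $\epsilon$ is forced and is an isomorphism; there is nothing to check there beyond this identification.

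The main construction is $\mu_{a,b}$. Writing $a = (Q\xrightarrow{p}M)$ and $b=(Q'\xrightarrow{p'}M')$, an element of $\Crl(a)\oplus\Crl(b)$ is a pair $(F,F')$ with $F\colon Q\to TM$, $F'\colon Q'\to TM'$ over $M$, $M'$ respectively; I send it to $F\times F'\colon Q\times Q'\to TM\times TM'\cong T(M\times M')$, which by the Remark on products in $\OS$ is an element of $\Crl(a\times b)$. This assignment is manifestly $\R$-linear in $(F,F')$, so $\mu_{a,b}$ is a linear map. First I would check naturality: given interconnection morphisms $\varphi\colon a_1\to a_2$ and $\psi\colon b_1\to b_2$ in $\SSubi$ (i.e.\ morphisms $a_2\to a_1$, $b_2\to b_1$ in $(\SSubi)^\op$), one must show that $\mu_{a_1,b_1}\circ(\varphi^*\oplus\psi^*) = (\varphi\times\psi)^*\circ\mu_{a_2,b_2}$. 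Unravelling \eqref{eq:3.9} and Remark~\ref{rmrk:3.6}, both sides send $(F,F')$ to the product of $T(\varphi_\st)\inv\circ F\circ\varphi_\tot$ and $T(\psi_\st)\inv\circ F'\circ\psi_\tot$, using that the canonical iso $T(M\times M')\cong TM\times TM'$ intertwines $T(\varphi_\st\times\psi_\st)$ with $T\varphi_\st\times T\psi_\st$; this is a direct computation.

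It then remains to verify the associativity and unit coherence hexagons/triangles. For associativity, both $\mu_{a\times b, c}\circ(\mu_{a,b}\oplus\id)$ and $\mu_{a, b\times c}\circ(\id\oplus\mu_{b,c})$ send $(F,F',F'')$ to $F\times F'\times F''$, modulo the associator isomorphisms of $\times$ on $\SSub$ and of $\oplus$ on $\Vect$, which are compatible because both are induced by the associativity of the Cartesian product of manifolds and of tangent bundles; similarly the unit triangles reduce to the identification $\Crl(\mathbf 1)=0$ and the fact that $0$ is the unit for $\oplus$. I do not expect any genuine obstacle here: the only mild subtlety---and the one step I would write out carefully---is keeping track of the natural isomorphism $T(M\times M')\cong TM\times TM'$ and checking it is coherent with the associators, so that the diagrams commute on the nose rather than merely up to further identification. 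Everything else is a formal consequence of $\times$ being Cartesian on $\SSub$ and $\Crl$ being a functor by Remark~\ref{rmrk:SSub_as_double}.
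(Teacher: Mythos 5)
Your proposal is correct and follows essentially the same route as the paper: the structure map $\Crl_{a,b}(F,G)=F\times G$ and the naturality square for a pair of interconnection morphisms are exactly the paper's proof. The only difference is that you additionally spell out the unit morphism (via $\Crl(\mathbf 1)=\scX(\{*\})=0$) and the associativity/unit coherence checks, which the paper leaves implicit; this is a harmless and slightly more complete presentation of the same argument.
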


\begin{proof}
For any two surjective submersions $a,b$ we have a linear map
\[
\Crl_{a,b}: \Crl(a)\oplus \Crl(b) \to \Crl(a\times b)
\]
which is given by
\[
\Crl_{a,b}(F,G) := F\times G
\]
for all $(F,G)\in \Crl(a)\oplus \Crl(b)$.  It is easy to see that for
any two interconnection morphisms $f:a\to a'$, $g:b\to b'$ the diagram
\[
\xy
(-20, 10)*+{\Crl(a')\oplus \Crl(b')}="1"; 
(20,10)*+{\Crl(a'\times b')}="2";
(-20, -10)*+{\Crl(a)\oplus \Crl(b)}="3"; 
(20,-10)*+{\Crl(a\times b)}="4";
{\ar@{->}^{ \Crl_{a',b'}}"1";"2"};
{\ar@{->}_{f^*\oplus g^*} "1";"3"};
{\ar@{->}^{(f\times g)^*} "2";"4"};
{\ar@{->}_{\Crl_{a,b} } "3";"4"};
\endxy
\]
commutes.  
\end{proof}
Since the functor $\Crl: (\SSubi)^\op\to \Vect$ is monoidal, it induces
a map of colored operads
\[
\cO\Crl: \cO((\SSubi)^\op)\to \cO\Vect.
\]
Therefore for any morphism 
\[
\psi\in \Hom_{\cO((\SSubi)^\op)}(a_1,\ldots, a_n;b) 
\]
we get a morphism 
\[
\cO\Crl(\psi) \in
\Hom_{\cO\Vect}(\Crl(a_1),\ldots, \Crl(a_n); \Crl(b))\equiv
\Hom_\Vect(\oplus_i \Crl(a_i), \Crl(b)). 
\]
The linear map $\cO\Crl(\psi):\oplus_i \Crl(a_i)\to \Crl(b)$  is given by
\[
\cO\Crl(\psi)(F_1,\ldots, F_n)= \psi^* (F_1\times\cdots \times F_n)
\]
for any $(F_1,\ldots, F_n)\in \oplus_{i=1}^n \Crl(a_i)$.\\

We would  next like to  give and justify a meaningful notion of a map between
networks.  Our strategy is to first discuss morphisms between lists of
submersions. We carry this out in the next section.  Maps between
networks themselves will be defined in Section~\ref{sec:net_open} after
further preparation.

\section{Categories of lists}\label{sec:lists}

Think of sets as discrete categories. Then for any category $\scC$ we
have the category $\FinSet/\scC$. By definition its objects are
functors of the form $\tau:X\to \scC$, where $X$ is a finite set
(i.e., a finite discrete category).  Morphisms are strictly commuting
triangles of the form
\[
\xy
(-10, 8)*+{X} ="1"; 
(10, 8)*+{Y} ="2";
(0,-4)*+{\scC}="3";
{\ar@{->}_{\tau} "1";"3"};
{\ar@{->}^{\varphi} "1";"2"};
{\ar@{->}^{\mu} "2";"3"};
\endxy , 
\]
where $\varphi$ is a map of finite sets.  We think of an object
$(X\xrightarrow{\tau}\scC)$ of $\FinSet/\scC$ as unordered list
$\{\tau(a)\}_{a\in X}$ of objects of the category $\scC$ indexed by
the finite set $X$. The composition of morphisms in $\FinSet/\scC$ is
given by pasting triangles together.
\begin{remark}
  In \cite{VSL} the category $\FinSet/\scC$ is called the category of
  typed finite sets of type $\scC$.
\end{remark}

\begin{remark} 
If the category $\scC$ has all finite products  there is a canonical functor 
\[
\PP= \PP_\scC:\left(\FinSet/\scC\right)^{\op} \to \scC.
\]
On objects the functor $\PP_\scC$  is defined by 
\[
\PP_\scC (\tau):= \prod _{x\in X}\tau(x).
\]
 On a morphism \[
\xy
(-10, 6)*+{X} ="1"; 
(10, 6)*+{Y} ="2";
(0,-6)*+{\scC}="3";
{\ar@{->}_{\tau} "1";"3"};
{\ar@{->}^{\varphi} "1";"2"};
{\ar@{->}^{\mu} "2";"3"};
\endxy 
\] 
the functor $\PP_\scC$ is defined by requiring that  
 the diagram
\[
\begin{tikzcd}
	\PP_\scC(\mu)\ar[r,dashed,"\PP_\scC(\varphi)"]\ar[d,"\pi_{\varphi(a)}"']
&
	\PP_\scC(\tau)\ar[d,"\pi_a"]
\\
	\mu(\varphi(a))\ar[r,"\id"']&\tau(a)
\end{tikzcd}
\]
commutes for all $a\in X$.  Here and elsewhere $\pi_a:\PP_\scC (\tau)=
\prod _{x\in X}\tau(x) \to \tau (a)$ is the projection on the $a$th
factor (i.e., one of the structure maps of the categorical product)
and $\pi_{\varphi(a)} $ is defined similarly.
\end{remark}
Since the objects of the category $\FinSet/\scC$ are functors it is
natural to modify the morphisms by allowing the triangles to be
2-commutative rather than strictly commutative. There are two choices
for the direction of the 2-arrow. If $\scC $ has finite products it
is natural to choose 2-commuting triangles of the form
\begin{equation}\label{eq:4.1}
\xy
(-10, 10)*+{X} ="1"; 
(10, 10)*+{Y} ="2";
(0,-2)*+{\scC }="3";
{\ar@{->}_{\tau} "1";"3"};
{\ar@{->}^{\varphi} "1";"2"};
{\ar@{->}^{\mu} "2";"3"};
{\ar@{=>}_{\scriptstyle \Phi} (4,6)*{};(-0.4,4)*{}} ; 
\endxy 
\end{equation}
as morphisms between lists. We denote this variant of
$\FinSet/\scC$ by $(\FinSet/\scC)^\Leftarrow$. The reason why this is
``natural'' is that if $\scC$ has finite products we again have a
 functor
\[
\PP = \PP_\scC: ((\FinSet/\scC)^\Leftarrow)^{\op} \to \scC.
\]
which is defined on objects by 
\[
\PP_\scC(\tau) := \prod_{a\in X} \tau(a).
\]
To extend the definition of $\PP_\scC$ to morphisms, we define
\[
\PP_\scC(\varphi, \Phi):= \PP_\scC\left(
\parbox{.8in}{
\xy
(-10, 10)*+{X} ="1"; 
(10, 10)*+{Y} ="2";
(0,-2)*+{\scC }="3";
{\ar@{->}_{\tau} "1";"3"};
{\ar@{->}^{\varphi} "1";"2"};
{\ar@{->}^{\mu} "2";"3"};
{\ar@{=>}_{\scriptstyle \Phi} (4,6)*{};(-0.4,4)*{}} ; 
\endxy 
}
\right)
\]
by requiring that the diagram
\begin{equation}\label{eq:*3}
\begin{tikzcd}[column sep=large]
	\PP_\scC(\mu)\ar[r,dashed,"{\PP_\scC(\varphi,\Phi)}"]\ar[d,"\pi_{\varphi(a)}"']
&
	\PP_\scC(\tau)\ar[d,"\pi_a"]
\\
	\mu(\varphi(a))\ar[r,"\Phi(a)"']
&
	\tau(a)
\end{tikzcd}
\end{equation}
commutes for all $a\in X$.

Here is an example of $(\FinSet/\scC)^\Leftarrow$ that we very much
care about: take $\scC = \SSub$, the category of surjective
submersions. Since $\SSub$ has finite products we have a contravariant
functor
\[
\PP_\SSub:((\FinSet/\SSub)^\Leftarrow)^{\op} \to \SSub.
\]
Other categories of interest are $\scC = \Man$, the category of
manifolds and $\scC = \Vect$, the category of vector spaces and linear
maps.  \mbox{}

\begin{remark} There is a canonical faithful functor $\jmath:
  \FinSet/\scC \hookrightarrow (\FinSet/\scC)^\Leftarrow$ which is
  identity on objects. The functor $\jmath$ sends an arrow
  $\varphi:(X\xrightarrow{\tau}\scC)\to (Y\xrightarrow{\mu}\scC)$ in
  $\FinSet/\SSub$ to the arrow
\[
\xy
(-10, 10)*+{X} ="1"; 
(10, 10)*+{Y} ="2";
(0,-2)*+{\scC }="3";
{\ar@{->}_{\tau} "1";"3"};
{\ar@{->}^{\varphi} "1";"2"};
{\ar@{->}^{\mu} "2";"3"};
{\ar@{=>}_{\scriptstyle id_{\tau} }(4,6)*{};(-0.4,4)*{}} ; 
\endxy 
\]
in $(\FinSet/\SSub)^\Leftarrow$ (note that $\tau = \mu\circ \varphi$).
If the category $\scC$ has finite products, the diagram
\[
\xy
(-20, 10)*+{\FinSet/\scC} ="1"; 
(20, 10)*+{(\FinSet/\scC)^\Leftarrow} ="2";
(0,-2)*+{\scC }="3";
{\ar@{->}_{\PP} "1";"3"};
{\ar@{->}^{\jmath} "1";"2"};
{\ar@{->}^{\PP} "2";"3"};
\endxy
\]
commutes.
\end{remark}
\begin{remark}
Note also that given two objects $X\xrightarrow{\tau}\scC$ and
$Y\xrightarrow{\mu}\scC$ in $\FinSet/\scC$ (or in
$(\FinSet/\scC)^\Leftarrow$) we have the new object
\[
\tau\sqcup \mu:X\sqcup Y\to \scC ,
\]
which is defined by taking disjoint unions (i.e., coproducts).
Moreover 
\[
\PP_\scC(\tau\sqcup \mu) = \PP_\scC(\tau)\times \PP_\scC (\mu),
\]
where $ \PP_\scC(\tau)\times \PP_\scC (\mu)$ is the categorical
product of $ \PP_\scC(\tau)$ and $\PP_\scC(\mu)$ in $\scC$.
\end{remark}

\subsection{ The functor $\odot:  ((\FinSet/\LinRel)^\Leftarrow)^{\op}
  \to \LinRel$} \label{subsec:4.2}
\mbox{}\\[4pt]
If $\scC = \LinRel$, the (2-)category of vector spaces and linear
relations, then the  category $(\FinSet/\LinRel)^\Leftarrow$ of finite unordered lists of vector spaces still makes
sense. However the existence of an extension of the assignment
\[
(\FinSet/\LinRel) \ni \tau\mapsto \oplus_{a\in X} \tau(a) \in \LinRel
\]
to a functor is a bit more delicate since the direct sum $\oplus$ is
not a product in $\LinRel$. This said, given a finite list $\tau: X\to
\LinRel$ of vector spaces and an arbitrary vector space $Z$ there is
a canonical map
\[
\bigoplus_{a\in X} \Hom_\LinRel (Z, \tau(a)) \to \Hom_\LinRel (Z, \oplus_{a\in X}
\tau(a)).
\]
It assigns to a collection of the subspaces $\{R_a\subset
\tau(a)\times Z\}_{a\in X}$ (that is, to a collection $\{R_a:
Z\tickar \tau(a)\}_{a\in X}$ of arrows in $\LinRel$) 
the intersection
\begin{multline} \label{eq:5**}
\bigcap _{a\in X} (\pi_a
\times\id_Z)\inv (R_a)= \\
\left\{((v_a)_{a\in X}, z)\in (\bigoplus_{a\in X} \tau(a)) \times Z\,\,
\middle|\,\, (v_a,z)\in R_a\textrm{ for all }a\in X\right\}.
\end{multline}
Here $\pi_a: \oplus_{x\in X} \tau(x) \to \tau(a)$ are the canonical
projections.
\begin{proposition}\label{prop:4.2}
The assignment 
\[
(\FinSet/\LinRel) \ni \tau \mapsto \oplus_{a\in X} \tau(a) \in \LinRel
\]
extends to a lax  functor
\[
\odot: ((\FinSet/\LinRel)^\Leftarrow)^{\op}   \to \LinRel.
\]
\end{proposition}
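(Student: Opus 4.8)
The plan is to promote the object-level assignment $\tau \mapsto \bigoplus_{a \in X}\tau(a)$ to a lax functor on $((\FinSet/\LinRel)^\Leftarrow)^{\op}$ by specifying its action on morphisms via the canonical maps already exhibited in \eqref{eq:5**}, and then checking the three things a lax functor needs: well-definedness (the putative image really is a linear relation), a comparison cell on composites, and a comparison cell on identities, with the coherence conditions. First I would define the action on a morphism $(\varphi,\Phi):(X\xrightarrow{\tau}\LinRel)\to(Y\xrightarrow{\mu}\LinRel)$ of $(\FinSet/\LinRel)^\Leftarrow$ — which in the opposite category points from $\mu$ to $\tau$ — by sending it to the linear relation $\odot(\varphi,\Phi) \subset (\bigoplus_{a\in X}\tau(a)) \times (\bigoplus_{b\in Y}\mu(b))$ built as in \eqref{eq:5**}, where the $a$th constraint is the relation $\Phi(a)\circ \pi_{\varphi(a)} : \bigoplus_{b\in Y}\mu(b) \tickar \tau(a)$ obtained by composing the projection away the $\varphi(a)$ coordinate of the source with the given 2-cell component $\Phi(a):\mu(\varphi(a))\tickar\tau(a)$. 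Concretely, $\odot(\varphi,\Phi) = \{((u_a)_{a\in X}, (w_b)_{b\in Y}) \mid (u_a, w_{\varphi(a)}) \in \Phi(a) \text{ for all } a \in X\}$. That this is a subspace is immediate since it is an intersection of preimages of subspaces under linear maps. I would check that on the image of $\jmath$ (i.e.\ when $\Phi = \id$ and each $\tau(a) = \mu(\varphi(a))$ genuinely, with the $\mu(b)$'s that are not hit being unconstrained) this recovers the graph of the usual reindexing/diagonal linear map, so $\odot$ restricts to an honest functor along $\FinSet/\LinRel$; this also pins down the identity comparison cell, which will in fact be an equality $\odot(\id_\tau) = \Delta_\tau \hookrightarrow \Delta_\tau$ where $\Delta_\tau$ is the diagonal, hence strictly unital.

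Next I would establish the lax composition cell. Given composable morphisms, in $(\FinSet/\LinRel)^\Leftarrow$,
\[
(X\xrightarrow{\tau}\LinRel) \xrightarrow{(\varphi,\Phi)} (Y\xrightarrow{\mu}\LinRel) \xrightarrow{(\psi,\Psi)} (Z\xrightarrow{\nu}\LinRel),
\]
with composite $(\psi\circ\varphi, \ \Phi \bullet (\Psi\circ\varphi))$ in the appropriate 2-categorical sense — the composite 2-cell at $a\in X$ being $\Phi(a)\circ\Psi(\varphi(a)) : \nu(\psi(\varphi(a))) \tickar \tau(a)$ — I claim the inclusion
\[
\odot(\psi,\Psi)\circ\odot(\varphi,\Phi) \ \subseteq\ \odot\big((\psi,\Psi)\circ(\varphi,\Phi)\big)
\]
holds and is the structure 2-morphism. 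This is the same phenomenon as $\Crl(g)\circ\Crl(f)\subset\Crl(g\circ f)$ from the excerpt: an element of the left side witnesses, for each $a$, a \emph{chosen} intermediate vector $v_{\varphi(a)}\in\mu(\varphi(a))$ with $(u_a,v_{\varphi(a)})\in\Phi(a)$ and, for each $b\in Y$, some $t_{\psi(b)}$ with $(v_b, t_{\psi(b)})\in\Psi(b)$; composing relations discards the witnesses and only remembers that \emph{some} such $v_{\varphi(a)}$ exists for the combined constraint at $a$, which is exactly membership in the right side. The reverse inclusion can fail — by exactly the mechanism of Example~\ref{ex:2.25}, since the $a$th and $a'$th constraints may force incompatible demands on a shared intermediate coordinate when $\varphi(a) = \varphi(a')$ — so laxness is genuine and one cannot do better than an inclusion. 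I would verify the associativity coherence pentagon for these cells, which reduces to the fact that the relevant inclusions of subspaces compose in the evident way (the 2-category $\LinRel$ has all 2-cells being inclusions, so any two parallel inclusions with the same source and target agree, and coherence is automatic once source/target match up).

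The main obstacle, and the step I would spend the most care on, is purely bookkeeping: setting up the 2-cell composition in $(\FinSet/\LinRel)^\Leftarrow$ correctly — in particular getting the direction of the whiskering $\Psi\circ\varphi$ right and confirming the composite triangle's 2-cell is $\Phi(a)\circ\Psi(\varphi(a))$ in $\LinRel$ and not its transpose — and then threading that through \eqref{eq:5**} without index errors. The conceptual content is light; it is the indexing over the (non-injective in general) map $\varphi:X\to Y$ that makes the formulas bulky. I would organize the proof by: (1) fixing notation for morphisms and 2-cells in $(\FinSet/\LinRel)^\Leftarrow$; (2) giving the explicit set-builder description of $\odot(\varphi,\Phi)$ and noting it is a subspace; (3) checking unitality and the restriction-to-$\jmath$ compatibility; (4) proving the composition inclusion by the witness-elimination argument above; (5) dispatching the coherence axioms via the triviality of 2-cell uniqueness in $\LinRel$; and (6) remarking, with a pointer to Example~\ref{ex:2.25}, that the inclusion in (4) is strict in general, which is why $\odot$ is only lax.
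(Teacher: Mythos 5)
Your proposal is correct and follows essentially the same route as the paper: the same set-builder/intersection-of-preimages definition of $\odot(\varphi,\Phi)$, and the same witness-elimination argument giving the lax comparison inclusion on composites (the paper leaves unitality and coherence implicit, which you dispatch via strict unitality and the fact that parallel 2-cells in $\LinRel$ are mere inclusions). One cosmetic slip: with the paper's convention that a relation from $V$ to $W$ is a subspace of $W\times V$, the composite to compare with $\odot$ of the composite morphism is $\odot(\varphi,\Phi)\circ\odot(\psi,\Psi)$ rather than $\odot(\psi,\Psi)\circ\odot(\varphi,\Phi)$; your elementwise argument already uses the correctly ordered composite.
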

\begin{proof}
Given a list $\tau\in  (\FinSet/\LinRel)^\Leftarrow$ we define 
\[
\odot (\tau):= \oplus_{a\in X}\tau(a).
\]
Given a 2-commuting triangle 
\[
\xy
(-10, 10)*+{X} ="1"; 
(10, 10)*+{Y} ="2";
(0,-2)*+{\LinRel }="3";
{\ar@{->}_{\tau} "1";"3"};
{\ar@{->}^{\varphi} "1";"2"};
{\ar@{->}^{\mu} "2";"3"};
{\ar@{=>}_{\scriptstyle \Phi} (4,6)*{};(-0.4,4)*{}} ; 
\endxy 
\]
we set
\begin{equation}\label{eq:5*}
\odot (\varphi, \Phi):= \bigcap _{a\in X} (\pi_a\times
\pi_{\varphi(a)})\inv (\Phi(a)),
\end{equation}
where 
\[
\pi_a\times \pi_{\varphi(a)}: \oplus_{x\in X}\tau(x) \times \oplus
_{b\in Y} \mu(b) \to \tau(a)\times \mu(\varphi(a)) 
\]
are the
projections and the relations $\Phi(a):\mu(\varphi(a)\tickar \tau (a))$ are
the component relations of the natural transformation $\Phi: \mu\circ
\varphi \Rightarrow \tau$.
It remains to check that given a pair of triangles 
\[
\xy
(-20, 10)*+{X} ="-1"; 
( 0, 10)*+{Y} ="1"; 
(20, 10)*+{Z} ="2";
(0,-6)*+{\LinRel }="3";
{\ar@{->}_{\tau} "-1";"3"};
{\ar@{->}^{\varphi} "-1";"1"}; 
{\ar@{->}_{\mu} "1";"3"};
{\ar@{->}^{\psi} "1";"2"};
{\ar@{->}^{\nu} "2";"3"};
{\ar@{=>}_<<<{\scriptstyle \Psi} (10,7)*{};(6,4)*{}} ; 
{\ar@{=>}_<<<{\scriptstyle \Phi} (-4,7)*{};(-8,4)*{}} ; 
\endxy
\]
that can be composed (pasted together) we have
\[
\odot (\varphi\psi , \Phi\circ (\Psi \varphi)) \supseteq 
\odot (\varphi, \Phi)\circ \odot (\psi, \Psi). 
\]
This is a computation. By definition (see \eqref{eq:5**} and \eqref{eq:5*})   we have
\begin{align*}
\odot (\psi, \Psi)&= \{((w_b)_{b\in Y},(v_c)_{c \in Z })\mid (w_b,v_{\psi(b)} )\in \Psi (b) \textrm{ for all } b\in Y\}\\
\odot (\varphi, \Phi)&= \{((u_a)_{a\in X},(w_b)_{b \in Y})\mid (u_a,w_{\varphi(a)} ) \in  \Phi(a)\textrm{ for all } a\in X\}.
\end{align*}
Hence 
\begin{gather}\label{eq4.1}
  \odot (\varphi, \Phi)\circ \odot (q, \Psi) =
  \left\{((u_a)_{a\in X}, (v_c)_{c\in Z})\;\;\middle|\;\;
  \parbox{2in}{\raggedright $\exists (w_b)\in \oplus_{b\in Y} \mu(b)$ so that $\forall a,   
(u_a, w_{p(a)})\in \Phi(a)$ and $\forall b, (w_b, v_{q(b)})\in \Psi(b)$}\right\}.
\end{gather}
The left hand side, on the other hand, is a
subspace of 
\[
\odot (\tau) \times \odot (\nu)= \oplus _{a\in X} \tau (a)
\times \oplus_{c\in Z} \nu(c)
\]
which is  given by
\begin{align}\label{eq4.2}
\odot (\varphi\psi,&\Phi \circ (\Psi\varphi)):=\\ \nonumber
& 
\{((u_a)_{a\in X},
(v_c)_{c\in Z})\mid (u_a, v_{\psi(\varphi(a))}) \in \Phi(a)\circ
  \Psi(\varphi(a)) \textrm{ for all } a\in X\}.
\end{align}
Since the right hand side of \eqref{eq4.1} is contained in the right
hand side of \eqref{eq4.2}, the result follows.
\end{proof}

\begin{example}
  We use the notation of Proposition~\ref{prop:4.2} above.  Suppose that
  $X=\{1,2,3\}$, $\tau: X\to \LinRel$ is a function that assigns to
  each $i\in X$ the same vector space $W$, $Y$ is a one point set
  $\{*\}$, $\mu(*)$ is some vector space $V$, and  $\Phi(i) \subseteq
  \tau(i)\times \mu(*) = W\times V$, $i=1,2,3$, are some linear relations.  Then
  $\odot(\varphi, \Phi) \subseteq W^3 \times V$ is the relation
\[
\odot(\varphi, \Phi) =\{(w_1,w_2,w_3, v)\mid (w_i, v) \in \Phi(i),
\quad i=1,2,3\}.
\]
\mbox{} \hfill $\Box$
\end{example}

The following lemma will prove useful in  computing examples.
\begin{lemma}\label{lem:6.2}
If the components $\Phi(a):\mu(\varphi(a)) \tickar \tau(a)$ of the natural transformation $\Phi: \mu\circ \varphi \Rightarrow \tau$ in the morphism 
\[
\xy
(-10, 10)*+{X} ="1"; 
(10, 10)*+{Y} ="2";
(0,-2)*+{\LinRel }="3";
{\ar@{->}_{\tau} "1";"3"};
{\ar@{->}^{\varphi} "1";"2"};
{\ar@{->}^{\mu} "2";"3"};
{\ar@{=>}_{\scriptstyle \Phi} (4,6)*{};(-0.4,4)*{}} ; 
\endxy  
\]
in the category $\FinSet/\LinRel)^\Leftarrow$ are graphs of linear
maps $\phi(a):\mu(\varphi(a)) \to \tau(a)$, i.e., $\Phi(a) =
\Graph(\phi(a))$, then the relation $\odot(\varphi,\Phi): \odot(\mu)\tickar
\odot(\tau)$ is the graph of a linear map.
\end{lemma}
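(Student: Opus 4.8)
The plan is to compute $\odot(\varphi, \Phi)$ explicitly using the defining formula \eqref{eq:5*} and show that, under the hypothesis, it is the graph of a linear map $\oplus_{b\in Y}\mu(b) \to \oplus_{a\in X}\tau(a)$. Write $\phi(a):\mu(\varphi(a))\to \tau(a)$ for the linear maps with $\Phi(a)=\Graph(\phi(a))$. By the definition of $\odot$ on morphisms (equivalently by \eqref{eq:5**}–\eqref{eq:5*}), a pair $\left((u_a)_{a\in X}, (w_b)_{b\in Y}\right)$ lies in $\odot(\varphi,\Phi)$ if and only if $(u_a, w_{\varphi(a)})\in \Phi(a)$ for all $a\in X$, which by hypothesis means $u_a = \phi(a)\bigl(w_{\varphi(a)}\bigr)$ for all $a\in X$.

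First I would package this as a candidate linear map. Define $g:\oplus_{b\in Y}\mu(b)\to \oplus_{a\in X}\tau(a)$ by $g\bigl((w_b)_{b\in Y}\bigr) := \bigl(\phi(a)(w_{\varphi(a)})\bigr)_{a\in X}$; this is manifestly linear, being built coordinatewise from the linear maps $\phi(a)$ precomposed with the (linear) projections $(w_b)_b\mapsto w_{\varphi(a)}$. The computation in the previous paragraph then says precisely that $\odot(\varphi,\Phi)=\{(g(w),w)\mid w\in \oplus_{b\in Y}\mu(b)\} = \Graph(g)$ as a subset of $\bigl(\oplus_{a\in X}\tau(a)\bigr)\times\bigl(\oplus_{b\in Y}\mu(b)\bigr) = \odot(\tau)\times\odot(\mu)$, which is the claim. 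There is essentially no obstacle here: the only things to check are that for each $a$ the condition $(u_a, w_{\varphi(a)})\in\Graph(\phi(a))$ unwinds to $u_a=\phi(a)(w_{\varphi(a)})$ (immediate from Remark~\ref{rmrk:2.24}), and that the resulting total condition cuts out exactly a graph, i.e.\ that the $\tau$-coordinates are uniquely determined by the $\mu$-coordinates and every choice of $\mu$-coordinates is attained — both of which are visible from the explicit formula for $g$.

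The one point that warrants a sentence of care is matching the orientation conventions: a linear relation $\odot(\mu)\tickar\odot(\tau)$ is a subspace of $\odot(\tau)\times\odot(\mu)$ (target first), and $\Graph$ of Remark~\ref{rmrk:2.24} sends $T:V\to W$ to $\{(w,v)\mid w=Tv\}\subseteq W\times V$; so I must make sure $g$ is the map pointing from the $\mu$-side to the $\tau$-side, which it is by construction. The mild subtlety is that even when every $\Phi(a)$ is a graph, $\odot(\varphi,\Phi)$ is defined by an intersection of pullbacks over all $a\in X$, and one should confirm the intersection of graphs (pulled back along the projections $\pi_a\times\pi_{\varphi(a)}$) is again a graph rather than merely a relation; this is exactly what the explicit description of $g$ above establishes, so the lemma follows with no further work.
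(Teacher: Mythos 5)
Your proof is correct and follows essentially the same route as the paper: the paper defines the same map $\oplus(\varphi,\Phi)$ via the universal property of the product (requiring $\pi_a\circ\oplus(\varphi,\Phi)=\phi(a)\circ\pi_{\varphi(a)}$ for all $a$), which is exactly your coordinatewise formula $g\bigl((w_b)\bigr)=\bigl(\phi(a)(w_{\varphi(a)})\bigr)_{a\in X}$, and then identifies its graph with $\odot(\varphi,\Phi)$ by the same unwinding of \eqref{eq:5*}. Your explicit attention to the orientation convention is a nice touch (and in fact fixes a small typo in the paper's displayed identification of the graph).
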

\begin{proof}
The linear map 
\[ 
\oplus(\varphi,\Phi):\oplus_{y\in Y}\mu(y) \to \oplus_{x\in X}\tau(x)
\] 
in question is uniquely defined by requiring that the diagrams
\[
\xy
(-20, 6)*+{\bigoplus_{y\in Y}\,\mu(y)}="1"; 
(10, 6)*+{\bigoplus_{x\in X}\,\tau(x)} ="2"; 
(-20, -6)*+{\mu(\varphi(a))}="3"; 
(10,-6)*+{\tau(a)}="4"; 
 {\ar@{->}_{\pi_{\varphi(a)}} "1";"3"}; 
{\ar@{-->}^{\oplus(\varphi,\Phi)} "1";"2"}; 
{\ar@{->}^{\pi_a} "2";"4"}; 
 {\ar@{->}_{\phi(a)} "3";"4"}; 
\endxy
\]
commute for all $a\in X$. %  
The map  $\oplus(\varphi,\Phi)$ is the required map since
\begin{multline}
\Graph(\oplus (\varphi,\Phi)) =\\
\{((v_x), (w_y)) \in (\oplus_{x\in X} \tau(x)) \times (\oplus_{y\in Y} \mu(y))\mid
  w_{\varpi(a)} = \phi(a)(v_a) \textrm{ for all }a\in X\}\\
=\{((v_x), (w_y)) \mid  (v_a, w_{\varphi(a)})  \in \Graph(\phi(a))  = \Phi(a)\}\\
= \odot (\varphi, \Phi).
\end{multline}
\end{proof}

\subsection{The algebra $\cO\Crl: \cO((\SSubi)^\op)\to \cO\Vect$ in
  terms of lists}\label{subsec:5.2}\mbox{}\\[4pt]
In discussing colored operads in Section~\ref{sec:operad} we swept a
few details under the rug.  We now revisit the discussion and revise
Remark~\ref{rmrk:net_pattern}.

Given a symmetric monoidal category
$(\sfC, \otimes)$ a morphism in the corresponding colored operad
$\cO\sfC$ has more generally as its source a finite list
$X\xrightarrow{\tau} \sfC$.  A morphism in $\cO\sfC$ with the source
$X\xrightarrow{\tau} \sfC$ and target $b\in \sfC$ is a morphism in $\sfC$
from the product $x_1\otimes (x_2\otimes (\cdots \otimes x_n)\ldots)$
for some choice of a bijection $\nu:\{1,\ldots, n\}\to X$ (where we
set $x_i:= \tau (\nu(i))$).  We make such a choice for each list in
$\FinSet/\sfC$.  The choices don't matter thanks to Mac Lane's
coherence theorem. 

Since the functor $\Crl: ((\SSubi)^\op, \times) \to (\Vect, \oplus) $
is lax monoidal (see Lemma~\ref{lem:crl_lax}) for any {\bf ordered} list
$a:\{1,\ldots, n\}\to \SSub$ of submersions we have a canonical linear
map
\[
\Crl_a: \bigoplus_{i=1}^n \Crl(a(i))\to \Crl (\prod _{i=1}^n a(i)).
\]
It is given by 
\begin{equation}\label{eq:5.2}
(\psi_1,\ldots, \psi_n)\mapsto \psi_1\times \cdots \times \psi_n.
\end{equation}

\begin{lemma}\label{lem:5.14} 
For any unordered  list $\tau: X\to \SSub$ we have a canonical
linear map
\begin{equation} \label{eq:5.16}
 \Crl_\tau: \bigoplus _{x\in X}\Crl(\tau(x))\to \Crl(\prod_{x\in X} \tau(x)) 
\end{equation}
so that if  $X=\{1,\ldots,n\}$ then $\Crl_\tau$ is given by  \eqref{eq:5.2}.
\end{lemma}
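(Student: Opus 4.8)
The plan is to construct $\Crl_\tau$ by transporting the ordered construction \eqref{eq:5.2} along an arbitrary bijection and then checking the result is independent of that choice. First I would pick a bijection $\nu : \{1,\ldots,n\} \to X$ where $n = |X|$; this turns the unordered list $\tau : X \to \SSub$ into the ordered list $\tau\circ\nu : \{1,\ldots,n\}\to\SSub$, to which Lemma~\ref{lem:5.14}'s predecessor (the map $\Crl_a$ of \eqref{eq:5.2}) applies, giving a linear map
\[
\bigoplus_{i=1}^n \Crl(\tau(\nu(i))) \to \Crl\Bigl(\prod_{i=1}^n \tau(\nu(i))\Bigr).
\]
I would then identify $\bigoplus_{i=1}^n \Crl(\tau(\nu(i)))$ with $\bigoplus_{x\in X}\Crl(\tau(x))$ via the canonical permutation isomorphism of the direct sum (which is the symmetry of the monoidal structure $(\Vect,\oplus)$ on the indexing set), and likewise $\prod_{i=1}^n \tau(\nu(i))$ with $\prod_{x\in X}\tau(x)$ via the symmetry isomorphism of the Cartesian product in $\SSub$, applying the functor $\Crl$ to the latter. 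Composing these three maps yields a candidate for $\Crl_\tau$.

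Next I would check that the candidate does not depend on the choice of $\nu$. Two bijections $\nu, \nu' : \{1,\ldots,n\}\to X$ differ by a permutation $\sigma \in S_n$, i.e. $\nu' = \nu\circ\sigma$. The point is that both the source identification $\bigoplus_i \Crl(\tau(\nu(i)))\cong\bigoplus_x\Crl(\tau(x))$ and the target identification $\Crl(\prod_i\tau(\nu(i)))\cong\Crl(\prod_x\tau(x))$ are built from the \emph{same} underlying permutation of the index set, and the ordered map $\Crl_a$ of \eqref{eq:5.2}, being $(\psi_1,\ldots,\psi_n)\mapsto\psi_1\times\cdots\times\psi_n$, is manifestly $S_n$-equivariant: permuting the inputs permutes the factors of the product in the matching way. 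So the two composites coming from $\nu$ and $\nu'$ agree. This is exactly the same ``coherence theorem makes the choices not matter'' mechanism already invoked in Subsection~\ref{subsec:5.2} for defining morphisms in $\cO\sfC$, and one could alternatively phrase the whole argument by saying that lax monoidality of $\Crl$ plus Mac Lane coherence gives a canonical $n$-ary structure map indexed by any finite set, not just a linearly ordered one.

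Finally, the normalization clause is immediate: when $X = \{1,\ldots,n\}$ with its standard order, one may take $\nu = \id$, and then all three maps in the composite except the middle one are identities, so $\Crl_\tau$ reduces to $\Crl_{\tau}$ of \eqref{eq:5.2} on the nose. I do not expect a genuine obstacle here; the only mild subtlety is bookkeeping — making sure the permutation used to reindex the direct sum on the source side is literally the same permutation used to reorder the factors of the product on the target side, so that $\Crl$ applied to the product-symmetry intertwines them with the sum-symmetry. Once that is set up carefully the equivariance of $(\psi_1,\ldots,\psi_n)\mapsto\psi_1\times\cdots\times\psi_n$ closes the argument.
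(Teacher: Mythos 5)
Your proposal is correct, but it proves the lemma by a genuinely different route than the paper. You reduce to the ordered case: choose a bijection $\nu:\{1,\ldots,n\}\to X$, apply the ordered map \eqref{eq:5.2} to $\tau\circ\nu$, transport along the canonical reindexing isomorphism of the direct sum and the symmetry isomorphism of the product in $\SSub$ (note that the latter is an isomorphism of submersions, hence an interconnection morphism, so $\Crl$ does apply to it), and then check independence of $\nu$ via the $S_n$-equivariance of $(\psi_1,\ldots,\psi_n)\mapsto\psi_1\times\cdots\times\psi_n$; the normalization clause is immediate for $\nu=\id$. That works, provided you do the bookkeeping you yourself flag, namely that the same permutation governs the reindexing of the summands and the reordering of the factors, and that $\Crl$ of the product symmetry (a pullback) intertwines the two --- this is exactly the symmetric compatibility of the lax monoidal structure of Lemma~\ref{lem:crl_lax}, which the paper states only as lax monoidality, so you are supplying a small extra verification. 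The paper instead avoids any choice of ordering: it decomposes $\Crl(\PP_\SSub(\tau))$ as a direct sum indexed by $X$ (using $T$ of a product of manifolds being the product of tangent bundles) and defines $\Crl_\tau$ by the universal property, i.e.\ as the unique linear map satisfying $T\pi_a^\st\circ\Crl_\tau((F_x)_{x\in X})=F_a\circ\pi_a^\tot$ for all $a\in X$. What the paper's construction buys is precisely this componentwise characterization \eqref{eq:5.17}, which is the workhorse in the proof of Lemma~\ref{thm:5.9}; your construction gives the same map but you would still need to unwind it to extract \eqref{eq:5.17}. What your construction buys is a shorter argument that makes the relation to \eqref{eq:5.2} and to Mac Lane coherence explicit.
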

\begin{proof}
We start by introducing notation. For each $a\in X$ we have a submersion $\tau(a)$ which we
  write as
\[
\tau=  (\tau(a)_{\tot}\xrightarrow{\upsilon_a} \tau(a)_{\st}).
\]
We denote the projections from $\bigoplus_{x\in X} \Control(\tau(x))$
to $\Control (\tau(a))$ by $\varpi_a$:
\begin{equation}\label{eq:5.15'}
\varpi_a:  \bigoplus_{x\in X} \Control(\tau(x)) \to \Control(\tau(a)).
\end{equation}
Since $\PP_\SSub(\tau) =\prod_{a\in X}\tau(a)$ we have canonical
projections
\[
\pi_a:  \prod_{x\in X}\tau(x) \to \tau(a). 
\]
Each $\pi_a$ is a pair of maps of manifolds:
\[
\pi_a^{\tot}:  \prod_{x\in X}\tau(x)_{\tot}  \to \tau(a)_{\tot}
\qquad\textrm{and}\qquad 
\pi_a^{\st}:  \prod_{x\in X}\tau(x)_{\st}  \to \tau(a)_{\st}.
\]
We set
\[
\vartheta_a := 
\upsilon_a \circ \pi_a^{\tot}:  \prod_{x\in X}\tau(x)_{\tot} \to \tau(a)_{\st}.
\]
We define the projections
\[
\Xi_a: \Control(\PP_\SSub(\tau))= 
\Control\left(\prod_{x\in    X}\tau(x)\right) \to 
\Control\left(\prod_{x\in    X}\tau(x)_{\tot}\xrightarrow{\vartheta_a} \tau(a)_{\st}\right)
\]
by 
\[
\Xi_a(\psi) := T\pi_a ^{\st} \circ \psi % 
\]
for all $\psi\in \Control(\PP_\SSub(\tau))$. Since $T (\prod_{x\in
  X}\tau(x)_{\st}) = \prod_{x\in X}T\, \tau(x)_{\st}$, the
projections $\Xi_a$ make $\Control(\PP_\SSub(\tau))$ into a direct sum:
\[
\Control(\PP_\SSub(\tau)) = \bigoplus_{a\in X}\Control \left(\prod_{x\in
  X}\tau(x)_{\tot}\xrightarrow{\vartheta_a} \tau(a)_{\st}\right).
\]
Finally we have pull-back maps
\[
(\pi_a^{\tot})^*:  
\Control\left(\tau(a)_{\tot}\xrightarrow{\upsilon_a} \tau(a)_{\st}\right)
\to \Control\left(\prod_{x\in X}\tau(x)_{\tot}\xrightarrow{\vartheta_a} \tau(a)_{\st}\right),
\]
\[
(\pi_a^{\tot})^*\psi_a := \psi_a \circ \pi_a^{\tot}
\]
or all $\psi_a \in \Control(\tau(a)_{\tot}\xrightarrow{\upsilon_a} \tau(a)_{\st})$.
By the universal property of products the family of maps 
\[
\left\{
(\pi_a^{\tot})^* \circ \varpi_a:  \oplus_{x\in X} \Control(\tau(x)) \to
\Control\left(\prod_{x\in X}\tau(x)_{\tot}\xrightarrow{\vartheta_a} \tau(a)_{\st}\right)
\right\}
\]
uniquely define a linear map $\Control_{\tau}$ making the diagram
\begin{equation}\label{eq:4.7}
\xy
(-30, 10)*+{\bigoplus_{x\in X} \Control(\tau(x)) } ="1"; 
(30, 10)*+{\Control(\PP_\SSub(\tau)) } ="2"; 
(-30, -10)*+{ \Control(\tau(a)) } ="3";
(30, -10)*+{\Control\left(\prod_{x\in X}\tau(x)_{\tot}
\xrightarrow{\vartheta_a} \tau(a)_{\st}\right) } ="4"; 
{\ar@{-->}^{\Control_{\tau}} "1";"2"};
{\ar@{->}_{\varpi_a} "1";"3"};
{\ar@{->}_{\Xi_a} "2";"4"};
{\ar@{->}_{(\pi_a^{\tot})^*\qquad} "3";"4"};
\endxy
\end{equation}
of vector spaces and linear maps commute. 
Note that by definition of $\Crl_\tau$
\begin{equation}\label{eq:5.17}
T\pi_a^\st \circ \Crl_\tau((F_x)_{x\in X}) = F_a \circ \pi_a^\tot
\end{equation}
for any $(F_x)_{x\in X} \in \bigoplus_{x\in X} \Control(\tau(x)) $ and
any $a\in X$.
\end{proof}

\begin{remark}
  It follows from Lemma~\ref{lem:5.14} that given a list
  $X\xrightarrow{\tau} \SSub$ and an interconnection morphism
  $b\xrightarrow{\psi} \prod_{x\in X} \tau(x) = \PP _\SSub (\tau)$ we
  have a linear map
\[
\cO\Crl(\psi): \bigoplus_{x\in X} \Crl(\tau(x))\to \Crl(b),
\]
which is given by
\[
\cO\Crl(\psi)= \psi^* \circ \Crl_\tau.
\]
\end{remark}

\section{Wiring diagrams}\label{sec:WD}
In section~\ref{sec:operad} we constructed the colored operad
$\cO((\SSubi)^\op)$ and the algebra 
\[
\cO\Crl: \cO((\SSubi)^\op)\to \cO\Vect.
\]
This algebra is similar to the algebra 
\[
\cO\mcG: \cO \mathbf{W} \to\cO \Set
\]
over the operad of $\cO \mathbf{W}$ of wiring diagrams defined in \cite{VSL}.
We now contrast  and compare the two operads and the two algebras.

To make the comparison
easier we recall the definition of the monoidal category $\mathbf{W}$
of wiring diagrams and the functor $\mcG: {\bf W}\to \Set$.  Note first that
in \cite{VSL} open continuous time dynamical systems are viewed 
differently from the way we have been viewing them in this
paper.  There an open system consists of three manifolds $M, U^\inp,
U^\out$, a smooth map $f^\out: M\to U^\out$  and an open system 
$f^\inp\in \Crl( M\times U^\inp \to M)$.
To distinguish the two approaches we will refer to the tuple 
\[
(M,
U^\inp, U^\out, f^\inp, f^\out)
\]
as a {\sf factorized open system} $f^\inp$ with {\sf output}
$f^\out$. The manifolds $M, U^\inp, U^\out$ are, respectively, the
spaces of states, inputs and outputs of the system $(M, U^\inp,
U^\out, f^\inp, f^\out)$.  Factorized open systems with outputs form a
category, which in \cite{VSL} is called $\mathsf{ODS}$ (for {\bf O}pen
{\bf D}ynamical {\bf S}ystems).  By definition a morphism from $(M_1,
U_1^\inp, U_1^\out, f_1^\inp, f_1^\out)$ to $(M_2, U^\inp_2, U^\out_2,
f^\inp_2, f^\out_2)$ is a triple of maps $\zeta = (\zeta^\st: M_1\to
M_2, \zeta^\inp: U_1 ^\inp \to U_2 ^\inp, \zeta^\out: U_1 ^\out \to
U_2 ^\out)$ so that the following diagram
\[
\xymatrix{
M_1\times U_1^\inp \quad\ar[r]^{ f_1^\inp \times f_1^\out } \ar[d]_{\zeta^\st\times\zeta^\inp}
&\quad TM_1\times U_1^\out  \ar[d]^{T\zeta^\st\times\zeta^\out} 
\\
M_2\times U_2^\inp \ar[r]_{f_2^\inp \times f_2^\out} & TM_2\times U_2^\out }
\]
commutes.  This category has finite products.  The symmetric monoidal
category $\mathbf{W}$ is defined as follows.  The objects of $\bfW$
are pairs of unordered lists of manifolds (or, equivalently, pairs of
typed finite sets of type ``manifold'').  Thus by definition an object $X$ of $\bfW$
is an ordered pair $ (\tau^\inp: X^\inp\to \Man, \tau^\out:X^\out\to
\Man)$ of objects of $\FinSet/\Man$.  The objects of $\bfW$ are called
{\sf boxes}.  The morphisms in $\bfW$ are called {\sf wiring
  diagrams}.  A wiring diagram is a triple $(X,Y,\varphi)$ where $X,Y$
are boxes and
\[
\varphi: X^\inp \sqcup Y^\out \to X^\out\sqcup Y^\inp
\]
is an isomorphism in $\FinSet/\Man$ (here and below we are suppressing
maps to $\Man$) so that
\begin{equation}\label{eq:6.*}
\varphi(Y^\out) \subseteq X^\out.
\end{equation}
Condition \eqref{eq:6.*} allows us to decompose $\varphi$ into a pair
$\varphi=(\varphi^\inp,\varphi^\out)$:
\begin{align}\label{dia:components of wd}
   \left\{
     \begin{array}{l}
       \varphi^\inp: X^\inp \to X^\out\sqcup Y^\inp \\
      \varphi^\out: Y^\out  \to X^\out
     \end{array}
   \right. .
\end{align}
Defining composition of morphisms in $\mathbf{W}$ and proving that
composition is associative, that is, proving that $\mathbf{W}$ is
actually a category, requires work (see \cite{VSL}).  Compare that
with the construction of the category $\SSubi$.

A {\sf wire} in a wiring diagram $(X,Y,\varphi)$ is a pair $(a,b)$,
where $a\in X^\inp\sqcup Y^\out$, $b\in X^\out\sqcup Y^\inp$, and
$\varphi(a)=b$.  The monoidal product on $\bfW$ is disjoint union:
\begin{gather*}
  (\tau^\inp:X^\inp\to \Man, \tau^\out:X^\out\to \Man)\sqcup
  (\mu^\inp:Y^\inp\to \Man, \mu^\out:Y^\out\to \Man):= \mbox{
    \hspace{6cm}}
  \\
\mbox{\hspace{2.6cm}}
(\tau^\inp\sqcup \mu^\inp:X^\inp\sqcup Y^\inp \to \Man,
  \tau^\out\sqcup \mu^\out :X^\out\sqcup Y^\out\to \Man).
\end{gather*}
The semantics of $\bfW$ is obtained by filling in the boxes in the
following sense.  Given a box $X= (X^\inp, X^\out) \in
(\FinSet/\Man)^2$ we have a pair of manifolds $(\PP X^\inp, \PP
X^\out)$, where as before the functor $\PP= \PP_\Man:(\FinSet/\Man)^\op \to
\Man$ is defined on objects by taking products:
\[
\PP (\tau) := \prod _{a\in X}\tau(a).
\]
Therefore a choice of a manifold $M$ defines a product fiber bundle 
\[
M\times \PP X^\inp\to M.
\]
We then can further choose an output map $f^\out: M\to \PP X^\out$ and
a factorized open system $f^\inp: M\times \PP X^\inp \to TM$.  This is
the consideration behind the definition of the functor $\mcG: \bfW\to
\Set$.  Its value on an object $X$ of $\bfW$ is, by definition, the
collection
\[
\mcG (X):= \{(S, f)\mid S\in \FinSet/\Man, f^\inp\times
f^\out:\PP S \times \PP X^\inp \to T\PP S \times \PP X^\out\}
\]
where $f= f^\inp\times f^\out$ are factorized open systems with
outputs.  (To make sure that $\mcG (X)$ is actually a set and not a
bigger collection we should, strictly speaking, replace the category
$\Man$ of manifolds by an equivalent small category. For example we
can redefine $\Man$ to consist of manifolds that are embedded in the
disjoint union $\sqcup _{n\in \N} \R^n$.)

We now see that the monoidal category $\bfW$ is set up so that an
object is a kind of black box with wires sticking out. The wires are
partitioned into two sets.  The first set of wires receive inputs.
The other set of wires report outputs.  The box is filled with open
dynamical systems.  By design we have no direct access to the state
spaces of these systems.  Compare this with the category of $\SSub$
where the objects specify the spaces of states of the systems. 

Note also that the functor $\mcG: \bfW \to \Set$ is very coarse.
For example if we start with a box $X$ whose inputs $X^\inp$ and
$X^\out$ are both singletons $\{*\}$ and $\tau^\inp(*)= \tau^\out(*)=
$ a point then $\mcG(X)$ is (in bijective correspondence with) the set
of {\em all} possible continuous time closed dynamical systems.

To conclude our discussion of wiring diagrams, the differences between
the algebra $ \cO\Crl: \cO((\SSubi)^\op)\to \cO\Vect$ of this paper
and $\cO\mcG: \cO \mathbf{W} \to\cO \Set$ of \cite{VSL} are
differences in philosophy and in intended applications.  The approach
of \cite{VSL} is to treat an open system as a black box with the space
of internal states as completely unknown with the algebra supplying
all possible choices of state spaces.  By contrast in this paper we
treat the space of internal states (and the total space) as known and
have the algebra supply the possible choices of open systems
(``dynamics'') that live on a given surjective submersion.

\section{Extension of the functor $\Crl$ to the category $(\FinSet/\SSub)^\Leftarrow$ of lists of submersions}\label{sec:7}

This section and the next one are technical.  The main result of this
section is Lemma~\ref{thm:5.9}. It will be reformulated as
Lemma~\ref{thm:5.9mrk2} in the next section once double categories are
introduced.  Lemma~\ref{thm:5.9} is, in effect, half the proof of the
main theorem of the paper, Theorem~\ref{thm:main}.

In Lemma~\ref{lem:5.14} we extended the object part of
the functor $\Crl:\SSub \to \LinRel$ to finite unordered lists of
submersions, which are objects of the category
$(\FinSet/\SSub)^\Leftarrow$.  We would like to extend $\Crl$ to maps
between lists, that is, to morphisms in the category
$(\FinSet/\SSub)^\Leftarrow$ of lists of submersions.  To this end
consider a map
\[
\xy
(-10, 10)*+{X} ="1"; 
(10, 10)*+{Y} ="2";
(0,-6)*+{\SSub }="3";
{\ar@{->}_{\tau} "1";"3"};
{\ar@{->}^{\varphi} "1";"2"};
{\ar@{->}^{\mu} "2";"3"};
{\ar@{=>}_<<<{\scriptstyle \Phi} (4,6)*{};(-2,2)*{}} ; 
\endxy 
\] 
between two lists of submersions.  We then have a pair of linear maps
\[
\Crl_\tau:\bigoplus _{x\in X}\Crl(\tau(x))\to \Crl(\PP_\SSub (\tau)),\qquad
\Crl_\mu:\bigoplus _{y\in Y}\Crl(\mu(y))\to \Crl(\PP_\SSub (\mu)).
\]%
We also have a map of submersions
\[
\PP_\SSub(\varphi,\Phi): \PP_\SSub (\mu)\to \PP_\SSub(\tau),
\]
hence a linear relation
\[
\Crl(\PP_\SSub(\varphi,\Phi)): \Crl(\PP_\SSub (\mu))\tickar \Crl(\PP_\SSub(\tau)).
\]
On the other hand we have a morphism of lists of vector spaces
\[
\xy
(-10, 10)*+{X} ="1"; 
(10, 10)*+{Y} ="2";
(0,-6)*+{\LinRel }="3";
{\ar@{->}_{\Crl\circ \tau} "1";"3"};
{\ar@{->}^{\varphi} "1";"2"};
{\ar@{->}^{\Crl\circ \mu} "2";"3"};
{\ar@{=>}_<<<{\scriptstyle \Crl \circ \Phi} (4,6)*{};(-2,2)*{}} ; 
\endxy ,
\] 
which give rise to a linear relation
\[
\odot(\varphi, \Crl \circ \Phi):  \bigoplus _{y\in Y}\Crl(\mu(y))
\tickar  \bigoplus _{x\in X}\Crl(\tau(x))
\]
(see subsection~\ref{subsec:4.2}).  These four maps  are related
in the following way.

\begin{lemma}\label{thm:5.9}
Suppose 
\[
\xy
(-10, 10)*+{X} ="1"; 
(10, 10)*+{Y} ="2";
(0,-6)*+{\SSub }="3";
{\ar@{->}_{\tau} "1";"3"};
{\ar@{->}^{\varphi} "1";"2"};
{\ar@{->}^{\mu} "2";"3"};
{\ar@{=>}_<<<{\scriptstyle \Phi} (4,6)*{};(-2,2)*{}} ; 
\endxy 
\] 
is a morphism in the category of lists of submersions.
The linear map
\[
\Crl_\tau\times \Crl _\mu:  \bigoplus _{x\in X}\Crl(\tau(x)) \times  \bigoplus _{y\in Y}\Crl(\mu(y)) \to \Crl(\PP_\SSub (\tau))\times \Crl(\PP_\SSub (\mu))
\]
sends the linear relation $\odot(\varphi, \Crl \circ \Phi)$ to a subspace of
the relation $\Crl(\PP_\SSub(\varphi,\Phi))$.  That is, for any pair 
\[
((F_x)_{x\in X}, (G_y)_{y\in Y})\in \odot(\varphi, \Crl \circ \Phi)
\subseteq \bigoplus _{x\in X} \Crl(\tau(x)) \times \bigoplus_{y\in
  Y}\Control(\mu(y))
\]
of lists of open systems, the open system $ \Crl_\mu((G_y))$ (see
\eqref{eq:5.16}) is $\PP(\varphi, \Phi)$ related to the open system
  $\Crl_\tau((F_x))$.
\end{lemma}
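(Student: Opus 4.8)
The plan is to unwind the three constructions involved --- the maps $\Crl_\tau$ and $\Crl_\mu$ of Lemma~\ref{lem:5.14}, the relation $\odot(\varphi,\Crl\circ\Phi)$, and the submersion map $\PP_\SSub(\varphi,\Phi)$ --- and then verify the asserted relatedness by a component-by-component diagram chase indexed by $X$.

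First I would fix notation: write $\PP$ for $\PP_\SSub$, let $\pi_a = (\pi_a^\tot,\pi_a^\st)\colon \PP(\tau)\to\tau(a)$ ($a\in X$) be the structure projections of the product, and let $\rho_b = (\rho_b^\tot,\rho_b^\st)\colon \PP(\mu)\to\mu(b)$ ($b\in Y$) be the analogous projections for $\mu$. I would record the two facts needed. By the defining property of $\Crl_\tau$ (equation~\eqref{eq:5.17} in Lemma~\ref{lem:5.14}), $T\pi_a^\st\circ\Crl_\tau((F_x)) = F_a\circ\pi_a^\tot$ for every $a\in X$, and likewise $T\rho_b^\st\circ\Crl_\mu((G_y)) = G_b\circ\rho_b^\tot$ for every $b\in Y$. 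By the definition of $\PP(\varphi,\Phi)$ (the commuting square~\eqref{eq:*3} with $\scC=\SSub$), for every $a\in X$ one has $\pi_a\circ\PP(\varphi,\Phi) = \Phi(a)\circ\rho_{\varphi(a)}$ in $\SSub$; taking $\tot$- and $\st$-components gives $\pi_a^\tot\circ\PP(\varphi,\Phi)_\tot = \Phi(a)_\tot\circ\rho_{\varphi(a)}^\tot$ and $\pi_a^\st\circ\PP(\varphi,\Phi)_\st = \Phi(a)_\st\circ\rho_{\varphi(a)}^\st$.

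Next I would translate the hypothesis and the conclusion. Using the explicit description of $\odot$ from the proof of Proposition~\ref{prop:4.2}, the hypothesis $((F_x),(G_y))\in\odot(\varphi,\Crl\circ\Phi)$ says exactly that $(F_a,G_{\varphi(a)})\in\Crl(\Phi(a))$ for every $a\in X$, i.e.\ $F_a$ and $G_{\varphi(a)}$ are $\Phi(a)$-related: $T\Phi(a)_\st\circ G_{\varphi(a)} = F_a\circ\Phi(a)_\tot$. The conclusion is that $\Crl_\mu((G_y))$ is $\PP(\varphi,\Phi)$-related to $\Crl_\tau((F_x))$, which by Definition~\ref{def:dyn_morph} is the single equation of maps $\PP(\mu)_\tot\to T\PP(\tau)_\st$
\[
T(\PP(\varphi,\Phi)_\st)\circ\Crl_\mu((G_y)) = \Crl_\tau((F_x))\circ\PP(\varphi,\Phi)_\tot.
\]

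Finally I would prove this equation. Since $T\PP(\tau)_\st = T\!\left(\prod_{x\in X}\tau(x)_\st\right) = \prod_{x\in X}T\tau(x)_\st$, it suffices to check equality after composing with each projection $T\pi_a^\st$, $a\in X$. On the right, \eqref{eq:5.17} and the $\tot$-component of the square give $T\pi_a^\st\circ\Crl_\tau((F_x))\circ\PP(\varphi,\Phi)_\tot = F_a\circ\pi_a^\tot\circ\PP(\varphi,\Phi)_\tot = F_a\circ\Phi(a)_\tot\circ\rho_{\varphi(a)}^\tot$. On the left, functoriality of $T$, the $\st$-component of the square, the corresponding identity for $\Crl_\mu$, and then the $\Phi(a)$-relatedness of $F_a$ and $G_{\varphi(a)}$ give $T\pi_a^\st\circ T(\PP(\varphi,\Phi)_\st)\circ\Crl_\mu((G_y)) = T\Phi(a)_\st\circ T\rho_{\varphi(a)}^\st\circ\Crl_\mu((G_y)) = T\Phi(a)_\st\circ G_{\varphi(a)}\circ\rho_{\varphi(a)}^\tot = F_a\circ\Phi(a)_\tot\circ\rho_{\varphi(a)}^\tot$. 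The two expressions coincide for every $a$, so the maps agree, which is the claim. I do not expect a genuine obstacle: the argument is a pure chase through commuting squares, and the only thing demanding care is the bookkeeping --- keeping the $\tot$- and $\st$-components apart, not conflating the projections of $\PP(\tau)$ with those of $\PP(\mu)$, and respecting the direction conventions (so that $\Crl(\PP(\varphi,\Phi))$ consists of pairs whose first entry lies in $\Crl(\PP(\tau))$, and correspondingly for $f$-relatedness of open systems).
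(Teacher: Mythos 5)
Your proposal is correct and follows essentially the same route as the paper's proof: translate membership in $\odot(\varphi,\Crl\circ\Phi)$ into componentwise $\Phi(a)$-relatedness, reduce the desired $\PP_\SSub(\varphi,\Phi)$-relatedness to equality after composing with each projection $T\pi_a^\st$ (using $T\PP(\tau)_\st=\prod_{x\in X}T\tau(x)_\st$), and then chase through \eqref{eq:5.17} and the defining square \eqref{eq:*3} on both sides. The only difference is cosmetic notation (your $\rho_b$ versus the paper's reuse of $\pi_{\varphi(a)}$ for the projections of $\PP_\SSub(\mu)$).
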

\begin{proof}
Recall that the map $\PP_\SSub(\varphi, \Phi): \PP_\SSub(\mu) \to \PP_\SSub(\tau)$ is defined by requiring that the diagrams
\begin{equation}\label{eq:7*3}
\begin{tikzcd}[column sep=large]
	\PP_\SSub(\mu)\ar[r,dashed,"{\PP_\SSub(\varphi,\Phi)}"]\ar[d,"\pi_{\varphi(a)}"']
&
	\PP_\SSub(\tau)\ar[d,"\pi_a"]
\\
	\mu(\varphi(a))\ar[r,"\Phi(a)"']
&
	\tau(a)
\end{tikzcd}
\end{equation}
commutes for all $a\in X$ (compare with \eqref{eq:*3}).
 By definition of the relation $\Crl(\PP(\varphi, \Phi))$, 
\[
(\Crl_\tau ((F_x)),
  \Crl_\mu ((G_y))) \in \Crl(\PP(\varphi, \Phi)) 
\]
if and only the two open systems are $\PP(\varphi, \Phi)$ related.
That is, if and only if the diagram
\begin{equation}\label{eq:*1}
\xy
(-20, 10)*+{\PP(\mu)_{\tot}}="1";
(20,10)*+{T(\PP( \mu)_{\st})}="2";
(-20, -10)*+{\PP(\tau)_{\tot}}="3"; 
(20, -10)*+{T(\PP(\tau)_{\st})}="4"; 
{\ar@{->}^{\Crl_{\mu}((G_{y})) }"1";"2"};
{\ar@{->}_{\Crl_\tau ((F_x))} "3";"4"};
{\ar@{->}^{T(\PP(\varphi, \Phi)_{\st})} "2";"4"};
{\ar@{->}_{\PP(\varphi, \Phi)_{\tot}} "1";"3"};
\endxy
\end{equation}
commutes. By definition of the relation $\odot(\varphi, \Phi):  \oplus_{y\in
    Y}\Control(\mu(y))\tickar \oplus _{x\in X} \Crl(\tau(x))$ a pair 
\[
((F_x)_{x\in X}, (G_y)_{y\in Y})\in  
\bigoplus _{a\in X} \Crl(\tau(a)) \times \bigoplus_{b\in    Y}\Control(\mu(b))
\]
of lists of open systems belongs to the relation $\odot(\varphi,
\Phi)$ if and only if $F_x$ and $G_{\varphi(x)}$ are $\Phi(x)$ related
for all $x\in X$.  That is, if and only if
\begin{equation}\label{eq:*0}
\xy
(-20, 10)*+{\mu(\varphi(x))_{\tot}}="1";
(20,10)*+{T\mu(\varphi(x))_{\st}}="2";
(-20, -10)*+{\tau(x)_{\tot}}="3"; 
(20, -10)*+{T\tau(x)_{\st}}="4"; 
{\ar@{->}^{G_{\varphi(x)} } "1";"2"};
{\ar@{->}_{F_x} "3";"4"};
{\ar@{->}^{T(\Phi(x)_{\st})} "2";"4"};
{\ar@{->}_{\Phi(x)_{\tot}} "1";"3"};
\endxy
\end{equation}
commutes for all $x\in X$.  The tangent bundle of a product of
manifolds is a product of tangent bundles
\[
T(\PP(\tau)_{\st}) = \prod_{a\in X}(T\tau(a)_{\st}).
\]  
Consequently
the diagram \eqref{eq:*1} commutes if and only if 
\begin{equation}\label{eq:*2}
T\pi_a^{\st} \circ \Crl_{\tau}((F_x))\circ \PP(\varphi, \Phi)_{\tot}=
 T\pi_a^{\st} \circ T(\PP(\varphi, \Phi)_{\st})\circ \Crl_{\mu}((G_y))
 \end{equation}
 for all $a\in X$. Here, as above, $\pi_a^{\st}: \prod_{x\in X}
 \tau(x)_{\st}\to \tau(a)_{\st}$ is the projection on the $a^{th}$
 factor.  By definition of $\Crl_\tau$  (see \eqref{eq:5.17})
\[
T\pi_a^\st \circ \Crl_\tau((F_x)_{x\in X}) = F_a \circ \pi_a^\tot
\]
for any $a\in X$.  By \eqref{eq:7*3}
\[
\pi_a^\tot \circ \PP (\varphi, \Phi)_\tot = 
     \Phi(a)_\tot \circ \pi^\tot_{\varphi(a)}
\]
Hence the left hand side of \eqref{eq:*2} is 
\[
F_a \circ \Phi(a)_{\tot} \circ \pi_{\varphi(a)}^{\tot}.
\]
We next compute the right hand side. Using the commutativity of
\eqref{eq:7*3} again we see that
\begin{eqnarray*}
\shortintertext{
\[
T \pi_a^{\st} \circ T(\PP(\varphi, \Phi)_{\st}) \circ \Crl_{\mu}((G_y))= 
\mbox{\hspace{16cm}}
\]
}
&=&T(\Phi(a)_{\st})\circ T\pi_{\varphi(a)}^{\st} \circ \Crl_{\mu} ((G_y))\\
&=& T(\Phi(a)_{\st})\circ (\pi_{\varphi(a)}^{\tot})^* \circ \varpi _{\varphi(a)} ((G_y))
\qquad  \mbox{\hspace{2cm}} \textrm{ (by definition of 
} \Crl_{\mu}, \textrm{ see \eqref{eq:5.17})}\\
&=&  T(\Phi(a)_{\st})\circ G_{\varphi(a)} \circ \pi_{\varphi(a)}^{\tot}\qquad  
\mbox{\hspace{.7cm}} \textrm{ (by definitions of $(\pi_{\varphi(a)}^{\tot})^*$ and of $\varpi _{\varphi(a)}$, see \eqref{eq:5.15'})}\\
&=& F_a \circ \Phi(a)_{\tot} \circ \pi_{\varphi(a)}^{\tot} \mbox{\hspace{6cm}}  
\qquad
\textrm{ (since \eqref{eq:*0} commutes).}
\end{eqnarray*}
Thus \eqref{eq:*2} holds.

\end{proof}
Lemma~\ref{thm:5.9} has a nice formulation in terms of double
categories --- see Lemma~\ref{thm:5.9mrk2}.  We will reformulate
Lemma~\ref{thm:5.9} after recalling the notions of a double category
in the next section.

\section{Double categories}\label{sec:double}

The goal of this section is to introduce double categories, to
reformulate Lemma~\ref{thm:5.9} in terms of double categories as
Lemma~\ref{thm:5.9mrk2}, and to prove
Lemma~\ref{lemma:crl_ssub_relvect}.  Lemmas~\ref{thm:5.9mrk2} and
\ref{lemma:crl_ssub_relvect} are used in the next section to prove
Theorem~\ref{thm:main}, which is the main theorem of the paper.

Recall that one can define (strict) double categories as
categories internal to the category $\mathsf{Cat}$ of categories.  

\begin{definition}\label{def:double_cat}
 A {\sf double category} $\D$ consists of two categories $\D_1$
(of arrows) and $\D_0$ (of objects) together with five structure
{\em functors}:
\begin{gather*}
  \frs, \frt:\D_1 \to \D_0 \quad (\textrm{source and target}), \qquad 
  \fru: \D_0 \to \D_1\quad (\textrm{unit})\\
  \fm: \D_1\times_{\frs, \D_0, \frt} \D_1\to \D_1 , 
\quad  (\xleftarrow{\mu}, \xleftarrow{\gamma}) \mapsto 
\fm(\xleftarrow{\mu}, \xleftarrow{\gamma})=: \mu* \gamma
\qquad (\textrm{multiplication/composition})
\end{gather*}
so that
\begin{gather*}
\frs \circ \fru = id_{\D_0}= \frt \circ \fru, \\
\frs (\mu *\gamma) = \frs (\gamma),\qquad \frt (\mu *\gamma) = \frt (\mu),
\end{gather*}
for all arrows $\mu,\gamma$ of $\D_1$ and
\begin{gather*}
(\mu * \gamma)* \nu =
\mu *(\gamma *\nu),\\
\mu* \fru(\frs(\mu)) = \mu,\qquad \fru(\frt(\mu))* \mu =\mu
\end{gather*}
for all arrows $\mu, \gamma, \nu$ of  $\D_1$.
\end{definition}
\begin{notation} We call the objects of $\D_0$ {\sf $0$-cells} or {\sf
    objects} and the morphisms of $\D_0$ the (vertical) {\sf
    1-morphisms}.  We depict 1-morphisms as $a\xleftarrow{f}b$. The
  objects of $\D_1$ are called the (horizontal) {\sf 1-cells} (or the
  horizontal {\sf 1-morphisms}).  A morphism $\alpha:\mu\to \nu$ of
  $\D_1$ with $\frs (\alpha) = f$ and $\frt(\alpha) =g$ is a {\sf
    2-morphism} or a {\sf 2-cell} (we will use the two terms
  interchangeably, {\em pace} Shulman \cite{Shulman10}).  We depict the two cell $\alpha$ 
  as
\begin{equation}\label{eq:2-mor}
\begin{tikzcd}
	c\ar[d,"g"']&a\ar[l,tick, "\mu"']\ar[d,"f"]\\
	d&b\ar[l,tick,"\nu"]\ar[lu,phantom,% description,
        "{\Box}"]
\end{tikzcd}
.
\end{equation}
\end{notation}
\begin{remark}
A 2-morphism in a double category $\D$ of the form 
\[
\begin{tikzcd}
	c\ar[d,"id_c"']&a\ar[l,tick, "\mu"']\ar[d,"id_a"]\\
	c&a\ar[l,tick,"\nu"]\ar[lu,phantom,% 
        "{\Box}"]
\end{tikzcd}
\]
 is called {\sf globular}.  Associated to every
strict double category $\D$ there is  a {\sf horizontal 2-category} $\scH(\D)$
consisting of objects, (horizontal) 1-cells and globular 2-morphisms.
\end{remark}

The main double category of interest for us is the double category
$\RelVect$ of vector spaces, linear maps and linear relations which we
presently define.

\begin{definition}[The double category $\RelVect$ of vector spaces,
  linear maps and linear relations]
  The category $\RelVect_0$ is the category of real vector spaces and
  linear maps.  The objects of the category $\RelVect_1$ are linear
  relations. Recall that we use the convention that a linear relation
  $R$ from a vector space $V$ to a vector space $W$ is a subspace of
  $W\times V$, which we write as $W\stackrel{R}{\ltickar}V$.  A
  morphism
\[
\alpha: (W\stackrel{R}{\ltickar}V) \to (Y\stackrel{S}{\ltickar}X)
\]
in $\RelVect_1$, that is, a 2-morphism in $\RelVect$, is a pair of linear
maps $f:V\to Y$, $g:W\to Y$ so that $(g\times f) (R)\subset S$ which
we picture as
\begin{equation}\label{eq:62.1}
\alpha =\quad
\begin{tikzcd}
	W\ar[d,"g"']&V\ar[l,tick, "R"']\ar[d,"f"]\\
	Y&X\ar[l,tick,"S"]\ar[lu,phantom,% description,
        "{\Box}"]
\end{tikzcd}.
\end{equation}
Note that the 2-cell $\alpha$ in \eqref{eq:62.1} is completely
determined by its ``edges'' $g$, $f$, $R$ and $S$.

The composition of morphisms in $\RelVect_1$ is the compositions of
pairs of linear maps: given 
\[
\beta = \quad
\begin{tikzcd}	Y\ar[d,"k"']&X\ar[l,tick, "S"']\ar[d,"h"]\\
	U&Z\ar[l,tick,"T"]\ar[lu,phantom,% description,
        "{\Box}"]
\end{tikzcd}
\quad
\textrm{and}  
\quad 
\alpha =\quad
\begin{tikzcd}
	W\ar[d,"g"']&V\ar[l,tick, "R"']\ar[d,"f"]\\
	Y&X\ar[l,tick,"S"]\ar[lu,phantom,% description,
        "{\Box}"]
\end{tikzcd},
\]
\[
\beta \circ \alpha = \quad
\begin{tikzcd}
	W\ar[d,"kg"']&V\ar[l,tick, "R"']\ar[d,"hf"]\\
	U&Z\ar[l,tick,"T"]\ar[lu,phantom,% description,
        "{\Box}"]
\end{tikzcd}.
\]

 Note that we {\sf do not} require the 2-morphisms in
$\RelVect$ to be commutative diagrams of relations: the condition
$(g,f)(R)\subset S$ does not imply that $\grph(f) \circ R = S \circ
\grph(f')$ (where $\grph(f)$, $\grph(g)$ are the graphs of $f$ and $g$
respectively and $\circ$ stands for the composition of relations).

To finish the description of the double category $\RelVect$ we need to
list the functors $\fru,\frs,\frt$ and $\fm$.  This is not
difficult. The functor $\fru$ sends a linear map $f:V\to W$ to the
2-morphism
\[
\fru(f): =\quad
\begin{tikzcd}
	V\ar[d,"f"']&V\ar[l,tick, "id_V"']\ar[d,"f"]\\
	W&X\ar[l,tick,"id_W"]\ar[lu,phantom,% description,
        "{\Box}"]
\end{tikzcd}.
\]

The result of applying the functor $\frs$ to the 2-morphism \eqref{eq:62.1}
is the linear map $f$ and the result of applying the functor $\frt$ is
the linear map $g$.  The horizontal composition $\fm$ is given by
\[
\fm \left(
\begin{tikzcd}
	U\ar[d,"h"']&W\ar[l,tick, "R'"']\ar[d,"g"]\\
	Z&Y\ar[l,tick,"S'"]\ar[lu,phantom,% description,
        "{\Box}"]
\end{tikzcd}\quad
,
\quad \begin{tikzcd}
	W\ar[d,"g"']&V\ar[l,tick, "R"']\ar[d,"f"]\\
	Y&X\ar[l,tick,"S"]\ar[lu,phantom,% description,
        "{\Box}"]
\end{tikzcd} \right):=\quad
\begin{tikzcd}
	U\ar[d,"h"']&V\ar[l,tick, "R'\circ R"']\ar[d,"f"]\\
	Z&X\ar[l,tick,"S'\circ S"]\ar[lu,phantom,% description,
        "{\Box}"]
\end{tikzcd}
\]
\end{definition}

\begin{remark}
  The horizontal category of the double category $\RelVect$ is the
  2-category $\LinRel$ (see Definition~\ref{def:lin-rel}):
\[
\scH(\RelVect) = \LinRel.
\]
\end{remark}

We can now restate Lemma~\ref{thm:5.9} in terms of the
double categories:
\begin{lemma}\label{thm:5.9mrk2}
A morphism 
\[
\xy
(-10, 10)*+{X} ="1"; 
(10, 10)*+{Y} ="2";
(0,-6)*+{\SSub }="3";
{\ar@{->}_{\tau} "1";"3"};
{\ar@{->}^{\varphi} "1";"2"};
{\ar@{->}^{\mu} "2";"3"};
{\ar@{=>}_<<<{\scriptstyle \Phi} (4,6)*{};(-2,2)*{}} ; 
\endxy 
\] 
in the category of lists of submersions gives rise to the 2-cell
\begin{equation}\label{eq:4.70}
\begin{tikzcd}[column sep=huge, row sep=large]
	% \odot(Y, \Control\circ \mu)=
        \bigoplus_{y\in Y} \Control(\mu(y))
	\ar[r,tick,"{\odot(\varphi,\Control\circ \Phi)}"]
	\ar[d,"\Control_{\mu}"']
&
	 \bigoplus_{x\in X} \Control(\tau(x))
	\ar[d,"\Control_{\tau}"]
\\
	\Control(\PP(\mu))
	\ar[r,tick,"{\Control(\PP(\varphi, \Phi))}"']
&
	\Control(\PP(\tau))
	\ar[lu,phantom,description,"\Box"]
\end{tikzcd}
\end{equation}
in the double category $\RelVect$ of vector spaces, linear maps and
linear relations. % 
\end{lemma}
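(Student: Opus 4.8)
The plan is to observe that Lemma~\ref{thm:5.9mrk2} is essentially a verbatim translation of Lemma~\ref{thm:5.9} into the language of the double category $\RelVect$: no new mathematical content is needed, and the proof consists of matching up the four boundary edges and invoking the one containment that Lemma~\ref{thm:5.9} already provides.

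First I would recall from the definition of $\RelVect$ what a 2-morphism with prescribed boundary is. Given linear maps $f\colon V\to X$ and $g\colon W\to Y$ together with linear relations $R\subseteq W\times V$ and $S\subseteq Y\times X$, there is a 2-cell of $\RelVect$ with these vertical and horizontal edges precisely when $(g\times f)(R)\subseteq S$; moreover, as noted right after \eqref{eq:62.1}, such a 2-cell carries no further data and is completely determined by its edges. So proving the lemma comes down to exhibiting the edges of \eqref{eq:4.70} and checking a single inclusion of subspaces.

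Next I would identify the edges. The vertical maps are the canonical linear maps $\Control_\mu\colon\bigoplus_{y\in Y}\Control(\mu(y))\to\Control(\PP(\mu))$ and $\Control_\tau\colon\bigoplus_{x\in X}\Control(\tau(x))\to\Control(\PP(\tau))$ supplied by Lemma~\ref{lem:5.14}. The top horizontal relation $\odot(\varphi,\Control\circ\Phi)$ is the linear relation produced by Proposition~\ref{prop:4.2} (Subsection~\ref{subsec:4.2}) from the morphism of lists of vector spaces obtained by post-composing the triangle $(\varphi,\Phi)$ with the lax functor $\Control\colon\SSub\to\LinRel$. The bottom horizontal relation $\Control(\PP(\varphi,\Phi))$ is the value of $\Control\colon\SSub\to\LinRel$ on the morphism of submersions $\PP_\SSub(\varphi,\Phi)\colon\PP_\SSub(\mu)\to\PP_\SSub(\tau)$. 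A quick check that all sources and targets agree shows that \eqref{eq:4.70} is a well-formed boundary in $\RelVect$. The remaining inclusion, $(\Control_\mu\times\Control_\tau)\bigl(\odot(\varphi,\Control\circ\Phi)\bigr)\subseteq\Control(\PP(\varphi,\Phi))$, is exactly the conclusion of Lemma~\ref{thm:5.9}: a pair of lists of open systems lying in $\odot(\varphi,\Control\circ\Phi)$ is sent to a pair of open systems that are $\PP(\varphi,\Phi)$-related, hence to an element of $\Control(\PP(\varphi,\Phi))$. This finishes the proof.

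Since the statement is a reformulation, there is no genuine obstacle; the only thing that requires care — and hence the step I would double-check most closely — is the bookkeeping of variance. One must confirm that the convention ``a relation from $V$ to $W$ is a subspace of $W\times V$'', the various $(-)^\op$'s appearing in $(\SSubi)^\op$ and $((\FinSet/\SSub)^\Leftarrow)^\op$, and the direction of the map $\PP_\SSub(\varphi,\Phi)$ are all oriented consistently, so that the inclusion extracted from Lemma~\ref{thm:5.9} is literally the condition $(g\times f)(R)\subseteq S$ demanded by the definition of a 2-morphism in $\RelVect$, and not its transpose.
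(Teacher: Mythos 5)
Your proposal is correct and matches the paper, which offers no separate argument for this lemma precisely because it is the restatement of Lemma~\ref{thm:5.9}: the 2-cell condition in $\RelVect$ is exactly the containment $(\Crl_\tau\times\Crl_\mu)\bigl(\odot(\varphi,\Crl\circ\Phi)\bigr)\subseteq\Crl(\PP_\SSub(\varphi,\Phi))$ established there. Just mind the order of the factors (the relation lives in $\bigoplus_{x\in X}\Crl(\tau(x))\times\bigoplus_{y\in Y}\Crl(\mu(y))$, so the map is $\Crl_\tau\times\Crl_\mu$, not $\Crl_\mu\times\Crl_\tau$), which is exactly the variance bookkeeping you flagged.
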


Next we  clarify Remark~\ref{rmrk:SSub_as_double}.

\begin{definition}[The double category $\SSub^\Box$ of
  submersions]\label{def:ssub_box}
  The objects of the double category $\SSub^\Box$ are surjective
  submersions.  The (horizontal) 1-cells/1-morphisms are maps of
  submersions.  The vertical 1-morphisms are interconnection morphisms
  (see Definition~\ref{def:intercon}).  The 2-cells of $\SSub^\Box$
  are squares of the form
\[
\begin{tikzcd}
	c\ar[d,"g"']&a\ar[l,tick, "\mu"']\ar[d,"f"]\\
	d&b\ar[l,tick,"\nu"]\ar[lu,phantom,% description,
        "{\Box}"]
\end{tikzcd}
\]
where $\mu,\nu$ are maps of submersions, $f$, $g$ are interconnection
morphisms and
\[
g\circ \mu = \nu\circ f
\]
in the category $\SSub$ of submersions.
\end{definition}

\begin{remark}
  Any category $\scC$ trivially defines a double category
  $\mathrm{db}\scC$: the 2-cells of $\mathrm{db}\scC$ are commuting
  squares in $\scC$.  We can also restrict the morphisms in these
  commuting squares by requiring that say vertical morphisms belong to
  a subcategory $\scD$ of $\scC$.  In Definition~\ref{def:ssub_box} above we
  took $\scC$ to be $\SSub$ and its subcategory $\scD$ to be $\SSubi$.
\end{remark}

It will be useful in understanding maps of networks to view the
functor $\Crl$ as a map of double categories.

\begin{lemma} \label{lemma:crl_ssub_relvect} 
A 2-cell
\begin{equation}\label{eq:square}
\begin{tikzcd}
	b&a\ar[l,tick, "\mu"']\\
	c\ar[u,"\psi"]&d\ar[u,"\varphi"']\ar[l,tick,"\nu"]
\ar[lu,phantom,       "{\Box}"]
\end{tikzcd}
\end{equation}
in the double category $\SSub^\Box$ of submersions (where $\mu, \nu$
are submersions and $\psi, \varphi$ are interconnection maps) gives
rise to the 2-cell
\[
\begin{tikzcd}
	\Crl(b)\ar[d,"\psi^*"']&\Crl(a)\ar[l,tick, "\Crl(\mu)"']
\ar[d,"\varphi^*"]\\
	\Crl(c)&\Crl(d)\ar[l,tick,"\Crl(\nu)"]
   \ar[lu,phantom,       "{\Box}"]
\end{tikzcd}
\]
in the double category $\RelVect$.  In other words if the open
systems $F\in \Crl(a)$ and $G\in \Crl(b)$ are $\mu$-related
then $\varphi^*F$ and $\psi^*G$ are $\nu$-related.
\end{lemma}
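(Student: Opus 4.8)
The two sentences of the lemma are the same assertion: unwinding the definitions of the linear relations $\Crl(\mu)$, $\Crl(\nu)$ (Section~\ref{sec:open}) and of a $2$-cell in $\RelVect$, the claimed $2$-cell is precisely the inclusion $(\psi^*\times\varphi^*)\bigl(\Crl(\mu)\bigr)\subseteq\Crl(\nu)$, i.e.\ the statement that $\mu$-relatedness of $F\in\Crl(a)$ and $G\in\Crl(b)$ forces $\nu$-relatedness of $\varphi^*F$ and $\psi^*G$. So the plan is to prove this implication by a direct diagram chase, feeding in only the commutativity of the square \eqref{eq:square} and the explicit formula for pullback along an interconnection morphism.

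First I would record the ingredients. Since $\psi$ and $\varphi$ are interconnection morphisms, their state components $\psi_\st$, $\varphi_\st$ are diffeomorphisms (Definition~\ref{def:intercon}), so by Remark~\ref{rmrk:3.6}
\[
\varphi^*F = T(\varphi_\st)\inv\circ F\circ\varphi_\tot,\qquad
\psi^*G = T(\psi_\st)\inv\circ G\circ\psi_\tot ,
\]
and in particular $\varphi^*F\in\Crl(d)$ and $\psi^*G\in\Crl(c)$ really are control systems on the correct submersions. Commutativity of \eqref{eq:square} in $\SSub$ gives, on total spaces and on state spaces respectively,
\[
\mu_\tot\circ\varphi_\tot=\psi_\tot\circ\nu_\tot,\qquad
\mu_\st\circ\varphi_\st=\psi_\st\circ\nu_\st ,
\]
and $\mu$-relatedness of $F$ and $G$ is the identity $T\mu_\st\circ F=G\circ\mu_\tot$.

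The computation then proceeds from the right-hand side of the desired identity $T\nu_\st\circ\varphi^*F=\psi^*G\circ\nu_\tot$: substitute the formula for $\psi^*G$; use the total-space relation to replace $\psi_\tot\circ\nu_\tot$ by $\mu_\tot\circ\varphi_\tot$; use $\mu$-relatedness to replace $G\circ\mu_\tot$ by $T\mu_\st\circ F$; and finally use functoriality of $T$ together with the state-space relation rewritten as $\psi_\st\inv\circ\mu_\st=\nu_\st\circ\varphi_\st\inv$ (legitimate because $\psi_\st$ and $\varphi_\st$ are invertible) to turn $T(\psi_\st)\inv\circ T\mu_\st$ into $T\nu_\st\circ T(\varphi_\st)\inv$. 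What is left is exactly $T\nu_\st\circ\varphi^*F$, which closes the chain.

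There is no real obstacle: this lemma is bookkeeping. The two points that warrant a moment's care are keeping the source/target conventions of the horizontal relations $\Crl(\mu)$ and $\Crl(\nu)$ straight (which of the two systems plays the role of ``source''), and noticing that it is precisely the invertibility of the state components built into Definition~\ref{def:intercon} that both makes $\psi^*$ and $\varphi^*$ genuine linear maps rather than mere relations and permits carrying $T$ past the inverses in the last step.
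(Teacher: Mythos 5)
Your proposal is correct and is essentially the paper's own proof: the same diagram chase using the formulas $\varphi^*F = T(\varphi_\st)\inv\circ F\circ\varphi_\tot$, $\psi^*G = T(\psi_\st)\inv\circ G\circ\psi_\tot$, the $\mu$-relatedness identity, and commutativity of \eqref{eq:square} on total and state spaces (with the chain rule to move $T$ past the inverses). The only difference is that you run the chain of equalities starting from $\psi^*G\circ\nu_\tot$ while the paper starts from $T\nu_\st\circ\varphi^*F$, which is immaterial.
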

\begin{proof}
Since $F$ and $G$ are $\mu$-related  
\begin{equation}\label{eq:7-2}
T\mu_\st \circ F = G\circ \mu_\tot
\end{equation}
(see Notation~\ref{note:2.15}).  By definition of $\varphi^*$ (see
Remark~\ref{rmrk:3.6})
\begin{equation}\label{eq:7-3}
\varphi^*F = T(\varphi_\st)\inv \circ F \circ \varphi_\tot.
\end{equation}
Similarly
\begin{equation}\label{eq:7-4}
\psi^*G = T(\psi_\st)\inv \circ G \circ \psi_\tot.
\end{equation}
We compute:
\begin{eqnarray*}
  T \nu_{\st} \circ  \varphi^*F'&= 
T\nu_\st \circ T(\varphi_\st)\inv \circ F \circ \varphi_\tot&\qquad 
\textrm{ by \eqref{eq:7-3}}\\
  &= T(\psi_\st)\inv \circ T\mu_\st \circ F \circ \varphi_\tot
&\qquad \textrm{ by commutativity of \eqref{eq:square} and chain rule}
\\
  &= T(\psi_\st)\inv \circ G \circ \mu_\tot \circ \varphi_\tot
  &\qquad \textrm{ by \eqref{eq:7-2}}\\
  &= T(\psi_\st)\inv \circ G \circ \psi_\tot \circ \nu_\tot
&\qquad\textrm{ since \eqref{eq:square} commutes}\\
 &=\psi^*G \circ \nu_\tot & \qquad \textrm{ by \eqref{eq:7-4}.}
\end{eqnarray*}
\end{proof}

\section{Maps between networks of open systems}\label{sec:net_open}

Recall (Definition~\ref{def:network}) that a network of open
systems is an unordered list of submersions $\tau:X\to \SSub$ indexed
by a finite set $X$ together with an interconnection morphism
$\psi:b\to \prod_{x\in X} \tau(x)\equiv \PP_\SSub (\tau)$.

\begin{definition}[Maps between networks of open systems]
\label{def:map_of_networks} 
A {\sf map} from a network $(\tau:X\to \SSub, \psi:b\to\PP_\SSub
(\tau)\equiv \prod_{x\in X}\tau(x) )$ of open systems to a network
$(\mu:Y\to \SSub, \nu:c\to\PP_\SSub (\mu)\equiv \prod_{y\in Y}\mu(y))$
is a morphism
\[
\xy
(-10, 10)*+{X} ="1"; 
(10, 10)*+{Y} ="2";
(0,-2)*+{\SSub }="3";
{\ar@{->}_{\tau} "1";"3"};
{\ar@{->}^{\varphi} "1";"2"};
{\ar@{->}^{\mu} "2";"3"};
{\ar@{=>}_{\scriptstyle \Phi} (4,6)*{};(-0.4,4)*{}} ; 
\endxy 
\]
in the category $(\FinSet/\SSub)^\Leftarrow$ of lists of submersions
together with a map of submersions $f:c\tickar b$ so that the diagram
of submersions
\begin{equation}\label{eq:open-net}
\begin{tikzcd}[column sep=large, row sep=large]
\prod_{y\in Y}\mu(y)	\ar[r,tick,"{\PP_\SSub(\varphi, \Phi)}"]	&        \prod_{x\in X}\tau(x)\\
c 	\ar[u,"\nu"]	\ar[r,tick,"f"']& 
	b	\ar[u,"\psi"']% \\
% 	 \prod_{y\in Y}\mu(y)
% 	\ar[u,tick,"{\PP_\SSub(\varphi, \Phi)}"]	
% &
% 	c 	\ar[u,tick,"f"']	\ar[l,"\nu"]
	\ar[lu,phantom,description,"\Box"]
\end{tikzcd}
\end{equation}
defines a 2-cell in the double category $\SSub^\Box$.  That is, we
require that
\[
\psi \circ f =  \PP_\SSub(\varphi, \Phi) \circ \nu.
\]
\end{definition}

\begin{example}\label{ex:9.1}  Here is an example of a map between two
 networks of open systems.  

  Our first network is the network $(\tau:X\to \SSub, \psi:b\to
  \prod_{x\in X}\tau(x))$ of Example~\ref{ex:3.13}, where $M, U$ are
  smooth manifolds, $p:M\times U\to M$ is a trivial fiber bundle,
  $\phi:M\to U$ a smooth map, $X=\{1, 2, 3\}$, $\tau:X\to \SSub$ is
  constant map with $\tau(1)= \tau(2)= \tau(3) = (p:M\times U\to M)$,
  $b$ is the trivial submersion $\id_{M^3}:M^3 \to M^3$ and
\[
\psi: b\to \prod_{x\in X}\tau(x) \simeq ((M\times U)^3 \to M^3)
\]
is the interconnection morphism with $\psi_\tot$ given by
\[
\psi_\tot(m_1, m_2, m_3)= ((m_1, \phi(m_2)), (m_2, \phi(m_1)),(m_3, \phi(m_2))).
\]
Our second network is $(\mu:Y\to \SSub,  \nu:c \to \prod_{y\in
  Y}\mu(y))$ where $Y=\{*\}$, $\mu(*) = (p:M\times U\to M)$, $c$ is
the trivial submersion $\id_M:M\to M$ and $\nu: c\to \mu(*)$ is the
interconnection morphism with $\nu_\tot$ given by
\[
\nu_\tot(m) = (m,\phi(m)).
\]
We now write down a map of networks.  We take $\varphi: X\to Y$ to be
the only possible map and set $\Phi_i:\mu(*)\to \tau(i)$ to be the
identity map for all $i\in X$.  That is, consider the morphism of
lists
\[
\xy
(-10, 8)*+{\{1,2,3\}} ="1"; 
(10, 8)*+{\{*\}} ="2";
(0,-4)*+{\SSub}="3";
{\ar@{->}_{\tau} "1";"3"};
{\ar@{->}^{\varphi} "1";"2"};
{\ar@{->}^{\mu} "2";"3"};{\ar@{=>}_{\scriptstyle \Phi} (4,5)*{};(-0.4,3)*{}} 
\endxy . 
\]
  In this case the induced map 
\[
\PP(\varphi, \Phi): (M\times U\to M) \to
  (M\times U\to M)^3
\]
 is the diagonal map
\[
\Delta: (M\times U\to M) \to (M\times U\to M)^3,
\qquad \Delta (m,u) = ((m,u), (m,u),(m,u)).
\]
We take $f:c\to b$ to be the diagonal map as well:
\[
f_\tot(m) = (m,m,m)
\]
Since 
\[
(\psi_\tot \circ f_\tot)\, (m) = ((m,\phi(m)), (m,\phi(m)),(m,\phi(m)))
= ((\PP_\SSub(\varphi, \Phi))_\tot \circ \nu_\tot)\, (m)
\]
for all $m\in M$,
 $\left(\xy
(-10, 6)*+{X} ="1"; 
(10, 6)*+{Y} ="2";
(0,-6)*+{\SSub }="3";
{\ar@{->}_{\tau} "1";"3"};
{\ar@{->}^{\varphi} "1";"2"};
{\ar@{->}^{\mu} "2";"3"};
{\ar@{=>}_{\scriptstyle \Phi} (4,2)*{};(-0.4,0)*{}} ; 
\endxy , f:c\tickar b\right)$ is a morphism of networks.  \\[12pt]

Note a curious feature of the example described above.  Consider an arbitrary
open system $F\in \Crl(M\times U\to M)= \Crl (\mu(*))$. The
interconnection map $\nu: (M\to M)\to (M\times U\to M)$ pulls $F$ back
to a vector field $v:M\to TM$ which is given by
\[
v(m) = F(m,\phi(m)).
\]
On the other hand
\[
F\times F\times F: (M\times U)^3 \to TM^3
\]
is an open system on $\prod_{x\in X}\tau(x) = (M\times U\to M)^3$ and
the interconnection map $\psi: b\to \prod_{x\in X}\tau(x)$ pulls
$F\times F \times F$ to a vector field $u:M^3 \to TM^3$.  This vector
field is given by
\begin{equation}\label{eq:vfu}
u(m_1, m_2, m_3)= (F(m_1,\phi(m_2)), F(m_2,\phi(m_1)),F(m_3,\phi(m_2)).
\end{equation}
Note that the diagonal 
\[
\Delta_{M^3}=\{(m_1, m_2, m_3) \in M^3 \mid m_1 = m_2 = m_3\}
\]
is an invariant submanifold of the vector field $u$.  We can rephrase
the invariance of the diagonal by saying that the vector fields $v$
and $u$ are $f:M\to M^3$-related, where as before $f$ denotes the
diagonal map.  Theorem~\ref{thm:main} below shows that the invariance
of the diagonal $\Delta_{M^3}$ for all vector fields on $M^3$ of the
form \eqref{eq:vfu} is not just a lucky accident.
\end{example}

\begin{theorem}\label{thm:main}
  A map 
\[
((\varphi,\Phi), f):(\tau:X\to \SSub,
  \psi:b\to\PP_\SSub (\tau)) \to (\mu:Y\to
  \SSub, \nu:c\to\PP_\SSub (\mu))
\]
of networks of open systems 
gives rise to a 2-cell
\begin{equation}\label{eq:9}
\begin{tikzcd}[column sep=large, row sep=large]
          \bigoplus_{y\in Y} \Control(\mu(y))
	\ar[r,tick,"{\odot(\varphi,\Control\circ \Phi)}"]
	\ar[d,"\nu^*\circ \Control_{\mu}"']
&
	 \bigoplus_{x\in X} \Control(\tau(x))
	\ar[d,"\psi^*\circ \Control_{\tau}"]
\\
	\Control(c)
	\ar[r,tick,"{\Control(f)}"']
&
	\Control(b)
	\ar[lu,phantom,description,"\Box"]
\end{tikzcd}
\end{equation}
in the double category $\RelVect$ of vector spaces, linear maps and
linear relations.
\end{theorem}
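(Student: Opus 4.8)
The plan is to assemble the desired 2-cell from the two 2-cells already constructed in Lemmas~\ref{thm:5.9mrk2} and \ref{lemma:crl_ssub_relvect} by horizontal (or rather vertical) pasting in the double category $\RelVect$. Recall that a map of networks consists of a morphism $(\varphi,\Phi)$ of lists of submersions together with a map of submersions $f:c\tickar b$ making the square \eqref{eq:open-net} commute in $\SSub^\Box$, i.e.\ $\psi\circ f = \PP_\SSub(\varphi,\Phi)\circ \nu$. So the first step is to observe that this commuting square is precisely a 2-cell
\[
\begin{tikzcd}
\PP_\SSub(\mu)\ar[d,"\nu"']&\PP_\SSub(\tau)\ar[l,tick,"{\PP_\SSub(\varphi,\Phi)}"']\ar[d,"\psi"]\\
c&b\ar[l,tick,"f"]\ar[lu,phantom,"{\Box}"]
\end{tikzcd}
\]
in the double category $\SSub^\Box$ (here I am using that $\nu,\psi$ are interconnection morphisms, which is part of the data of a network).

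The second step is to feed this $\SSub^\Box$ 2-cell into Lemma~\ref{lemma:crl_ssub_relvect}. That lemma converts any $\SSub^\Box$ 2-cell with interconnection morphisms on the vertical edges into a $\RelVect$ 2-cell; applying it here yields
\[
\begin{tikzcd}
\Crl(\PP_\SSub(\mu))\ar[d,"\nu^*"']&\Crl(\PP_\SSub(\tau))\ar[l,tick,"{\Crl(\PP_\SSub(\varphi,\Phi))}"']\ar[d,"\psi^*"]\\
\Crl(c)&\Crl(b)\ar[l,tick,"{\Crl(f)}"]\ar[lu,phantom,"{\Box}"]
\end{tikzcd}
\]
in $\RelVect$. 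The third step is to stack this 2-cell below the 2-cell \eqref{eq:4.70} produced by Lemma~\ref{thm:5.9mrk2}, whose bottom edge $\Crl(\PP_\SSub(\varphi,\Phi))$ and whose vertical legs $\Control_\mu,\Control_\tau$ match the top edge and the top vertical legs of the 2-cell just obtained. Composing vertically in $\RelVect$ — which on vertical edges is ordinary composition of linear maps, on horizontal edges is identity, and on the labels amounts to the transitivity of the relation ``being $F$-related'' for composable $F$'s — gives a 2-cell with vertical legs $\nu^*\circ\Control_\mu$ and $\psi^*\circ\Control_\tau$, top edge $\odot(\varphi,\Control\circ\Phi)$, and bottom edge $\Control(f)$, which is exactly \eqref{eq:9}.

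The only genuine content beyond citing the two lemmas is checking that vertical composition of 2-cells in $\RelVect$ is legal here, i.e.\ that the edges really do match: the bottom-left and bottom-right objects of \eqref{eq:4.70} are $\Control(\PP(\mu))$ and $\Control(\PP(\tau))$, the top-left and top-right objects of the second square are the same, and the shared horizontal 1-cell is $\Control(\PP(\varphi,\Phi))$ in both, so the composite is well-defined by Definition~\ref{def:double_cat}. I expect the main (very mild) obstacle is purely bookkeeping: making sure the identifications $T(\PP(\tau)_\st)=\prod_{x}T\tau(x)_\st$ used implicitly in Lemma~\ref{thm:5.9mrk2} are compatible with the definitions of $\nu^*$ and $\psi^*$ via $T(\nu_\st)\inv$ and $T(\psi_\st)\inv$, so that the pasted 2-cell's vertical legs are literally the composites $\nu^*\circ\Control_\mu$ and $\psi^*\circ\Control_\tau$ named in the statement, rather than merely isomorphic to them. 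Once that identification is in place, the theorem is immediate from the vertical composability of 2-cells in the strict double category $\RelVect$.
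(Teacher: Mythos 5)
Your proposal is correct and follows essentially the same route as the paper's own proof: recognize the commuting square $\psi\circ f=\PP_\SSub(\varphi,\Phi)\circ\nu$ as a 2-cell in $\SSub^\Box$, convert it via Lemma~\ref{lemma:crl_ssub_relvect}, and vertically compose the result in $\RelVect$ with the 2-cell supplied by Lemma~\ref{thm:5.9mrk2}. The bookkeeping point you raise about the identification $T(\PP(\tau)_\st)=\prod_x T\tau(x)_\st$ is already built into the construction of $\Crl_\tau$ in Lemma~\ref{lem:5.14}, so no further verification is needed.
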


\begin{proof}
By Lemma~\ref{thm:5.9mrk2} the map \[
\xy
(-10, 10)*+{X} ="1"; 
(10, 10)*+{Y} ="2";
(0,-6)*+{\SSub }="3";
{\ar@{->}_{\tau} "1";"3"};
{\ar@{->}^{\varphi} "1";"2"};
{\ar@{->}^{\mu} "2";"3"};
{\ar@{=>}_<<<{\scriptstyle \Phi} (4,6)*{};(-2,2)*{}} ; 
\endxy 
\] 
in the category of lists of submersions gives rise to the 2-cell
\begin{equation}\label{eq:9.4}
\begin{tikzcd}[column sep=huge, row sep=large]
	% \odot(Y, \Control\circ \mu)=
        \bigoplus_{y\in Y} \Control(\mu(y))
	\ar[r,tick,"{\odot(\varphi,\Control\circ \Phi)}"]
	\ar[d,"\Control_{\mu}"']
&
	 \bigoplus_{x\in X} \Control(\tau(x))
	\ar[d,"\Control_{\tau}"]
\\
	\Control(\prod_{y\in Y}\mu(y))
	\ar[r,tick,"{\Control(\PP_\SSub(\varphi, \Phi))}"']
&
	\Control(\prod_{x\in X}\tau(x))
	\ar[lu,phantom,description,"\Box"]
\end{tikzcd}
\end{equation}
in $\RelVect$.   By Lemma~\ref{lemma:crl_ssub_relvect} the 2-cell
\[
\begin{tikzcd}[column sep=large, row sep=large]
        \prod_{y\in Y}\mu(y)
	\ar[r,tick,"{\PP_\SSub(\varphi, \Phi)}"]
&
	\prod_{x\in X}\tau(x)	%\ar[l,"\psi"']
\\
	 c	\ar[u,"\nu"'] 	\ar[r,tick,"f"']		
&
	b	\ar[u,"\psi"']	
	\ar[lu,phantom,description,"\Box"]
\end{tikzcd}
\]
in $\SSub^\Box$ gives rise to the 2-cell
\begin{equation}\label{eq:9.5}
\begin{tikzcd}[column sep=huge, row sep=large]
        \Crl(\prod_{y\in Y}\mu(y))\ar[d,"\nu^*"]
	\ar[r,tick,"{\Crl(\PP_\SSub(\varphi, \Phi))}"]
&
	\Crl(\prod_{x\in X}\tau(x))\ar[d,"\psi^*"]
\\
	 \Crl(c)		\ar[r,tick,"f"']		
&
	\Crl(b)	%\ar[u,"\psi^*"']
% 	\Crl(b)\ar[d,"\psi^*"']&\Crl(a)\ar[l,tick, "\Crl(\mu)"']
% \ar[d,"\varphi^*"]\\
% 	\Crl(c)&\Crl(d)\ar[l,tick,"\Crl(\nu)"]
%    \ar[lu,phantom,       "{\Box}"]
\end{tikzcd}
\end{equation}
in $\RelVect$.  The vertical composite of the 2-cell \eqref{eq:9.4}
followed by the 2-cell \eqref{eq:9.5} is the 2-cell \eqref{eq:9}.
\end{proof}
We now illustrate Theorem~\ref{thm:main} in several more examples.
Before we do that We remind the reader that the space $$\Crl(\id: M\to
M)$$ of open systems on the identity map $\id: M\to M$ for some
manifold $M$ is the space of vector fields $\scX(M)$ on the manifold
$M$. %
\begin{remark}
Note  that a
map $f$ from a trivial fiber bundle $p: M\times U\to M$ to a trivial
fiber bundle $q: N\times W \to N$ is completely determined by the map
$f_{\tot}: M\times U \to N\times W$ between their total spaces (cf.\
Definition~\ref{def:mor_ssub}). Indeed, since the diagram 
\[
 \xy 
(-15,10)*+{M\times U}="1";
(15, 10)*+{N\times W}="2";
(-15,-5)*+{M}="3";
(15, -5)*+{N}="4";
{\ar@{->}^{f_{\tot}} "1";"2"};
{\ar@{->}_{} "1";"3"};
{\ar@{->}^{} "2";"4"};
{\ar@{->}_{f_{\st}} "3";"4"};
\endxy
\]
commutes, the map $f_\tot$ has to be of the form
\[
f_\tot(m,u) = (f_\st(m), h(m,u))
\]
 for some smooth map $h:M\times U\to W$.
\end{remark}

\begin{example}\label{ex:1.2}
Consider the map of lists 
\[
\xy
(-16, 9)*+{X=\{1,2\}} ="1"; 
(16, 9)*+{\{*\}=Y} ="2";
(0,-4)*+{\SSub}="3";
{\ar@{->}_{\tau} "1";"3"};
{\ar@{->}^{\varphi} "1";"2"};
{\ar@{->}^{\mu} "2";"3"};{\ar@{=>}_{\scriptstyle \Phi} (4,5)*{};(-0.4,3)*{}} 
\endxy  
\]
where $\varphi :X\to Y$ is the only possible map.%   We consider the projection on the first factor $\R\times \R\to \R$ (where $M$, $U$
% are manifolds)
and set $\tau(1) =\tau(2) = \mu(*)= (p: \R\times \R\to \R) $, where
$p$ is the projection on the first factor.  % To make things more
% concrete we now take $M= U= \R$, the real line.
We define $\Phi=
\{\Phi_i:\mu(\varphi(i))\to \tau(i)\}_{i =1}^2$ as follows.  We choose
\[
\Phi_1:\mu(\varphi(1))= (p:\R\times \R \to \R) \to (p:\R\times \R \to
R) =\tau(1)
\]
 to
  be the map 
\[
\Phi_1 (x,u) = (x^2, u) 
\]
and  
\[
\Phi_2:\mu(\varphi(2))=(p:\R\times \R \to \R) \to (p:\R\times \R \to \R) =\tau(2)
\]
to be the map 
\[
\Phi_2 (x,u) = (x, u^2). 
\]
Then $\PP(\varphi, \Phi): (p:\R\times \R \to \R) \to (p:\R\times
\R \to R)^2$ is given by
\[
\PP(\varphi, \Phi) (x,u) = ((x^2, u), (x, u^2)).
\] 
We choose $s: M\to U$ to be the identity map.  This gives us an
interconnection map
\[
\nu: (\id:\R \to \R) \to (p:\R\times \R \to \R), \qquad \nu(x) = (x, x).
\]
We choose $\psi: (\id:\R \to \R)^2 \to (p:\R\times \R \to \R)^2$ to be 
\[
\psi(x_1, x_2) = ((x_1,x_2), (x_2, x_1)).
\]
We choose $f: M = \R \to \R^2 = M^2$ to be the map 
\[
f(x) = (x^2, x).
\]
It is easy to see that 
\[
\PP(\varphi, \Phi) \circ \nu = \psi\circ f.
\]
In this case Theorem~\ref{thm:main} tells us that given any three open
systems $G, F_1, F_2 \in \Crl (p:\R^2 \to \R)$ so that $G$ is
$\Phi_i$-related to $F_i$ ($i\in X$), the vector field $\nu^*G$ is
$f$-related to the vector field $\psi^* (F_1, F_2)$.  Consequently since
the image of $f$ is the parabola
\[
\{(x_1, x_2) \in \R^2\mid x_1 = (x_2)^2\},
\]
the parabola is an invariant submanifold of the vector field $\psi^*
(F_1, F_2)$.  Such an invariant submanifold can never arise in the
coupled cell networks formalism.

A reader may wonder the the vector space of triples $G, F_1, F_2 \in
\Crl (p:\R^2 \to \R)$ so that $G$ is $\Phi_i$-related to $F_i$ ($i\in
X$) is non-zero.  It is not hard to see that the space of such triples
is at least as big as the space $C^\infty (\R^2)$. Indeed, given a function
$g\in C^\infty (\R^2)$ let
\begin{gather*}
G(x,u) = \frac{1}{2} xg (x^2, u^2)\\
F_1 (v,u) = v g(v, u^2)\\
F_2 (x,w) =\frac{1}{2} xg (x^2, w).
\end{gather*}
Then 
\[
F_2 \circ \Phi_2 (x,u) = T(\Phi_2)_\st\, G(x,u) (=G(x,u))
\]
 and 
\[
(F_1 \circ \Phi_1) (x,u) =
x^2 g(x^2, u^2) = 2x G(x,u) = (T(\Phi_1)_\st \circ G) (x, u).
\]
Consequently the parabola $\{x_1= (x_2)^2\}$ is an invariant submanifold
for any vector field $v:\R^2 \to \R^2$ of the form
\[
v(x_1,x_2)  = (x_1 g(x_1, x_2^2), \frac{1}{2} x_2 g(x_2^2, x_1))
\]
for any function $g\in C^\infty (\R^2)$.\hfill $\Box$.
\end{example}

\begin{example} \label{ex:1.3} In Examples~\ref{ex:9.1} and
  \ref{ex:1.2} we started with two collections of open systems and
  ended up with two related closed systems.  It is easy to modify Example~\ref{ex:1.2}  so that the end result are two related open systems.   

We now carry out the modification. As before let $X =\{1,2\}$, $Y=
  \{*\}$, and let $\varphi: X\to Y$ be the only possible map.  Now
  consider the surjective submersion 
\[
q:\R^3 \to \R\qquad q(x,u,v) = x.
\]
Set $\mu(*) = \tau (1) = \tau (2) = (q:\R^3 \to \R)$.  Choose
$\Phi_i:\mu(*) \to \tau(i)$, $i=1,2$, to be the maps
\[
\Phi_1(x,u, v) = (x^2, u,v),\qquad \Phi_2(x,u, v) = (x, u^2,v).
\]
This defines a morphism
\[
\xy
(-10, 8)*+{\{1,2\}} ="1"; 
(10, 8)*+{\{*\}} ="2";
(0,-4)*+{\SSub}="3";
{\ar@{->}_{\tau} "1";"3"};
{\ar@{->}^{\varphi} "1";"2"};
{\ar@{->}^{\mu} "2";"3"};{\ar@{=>}_{\scriptstyle \Phi} (4,5)*{};(-0.4,3)*{}} 
\endxy 
\]
In the category of lists of submersions.
It is easy to see that 
\[
\PP(\varphi, \Phi) \,(x,u,v) = ((x^2, u,v), (x,u^2, v)).
\]
Choose the interconnection maps $\psi, \nu$ as follows:
\begin{gather*}
\nu:(p:\R^2 \to \R) \to (q:\R^3 \to \R),\qquad  \nu(x, v) = (x, x,v); \\
\psi:(p:\R^2 \to \R)^2 \to (q:\R^3 \to \R)^2,\qquad  
\psi ((x_1, v_1), (x_2,v_2)) = ((x_1, x_2 ,v), (x_2, x_1, v)).
\end{gather*}
Let $f: (p:\R^2 \to \R) \to (p:\R^2 \to \R)^2$ be the map
\[
f(x,v) = ((x^2, v),(x,v)).
\]
It is again easy to check that the equality 
\[
\PP(\varphi, \Phi) \circ \nu = \psi\circ f
\]
holds with our choices of $\psi,\nu, f, p$ and $\Phi$.  Therefore,
by Theorem~\ref{thm:main}, given three open systems $G,F_1, F_2\in
\Crl(q:\R^3 \to \R)$ so that $G$ is $\Phi_1$-related to $F_1$ and
$\Phi_2$-related to $F_2$, the open systems $\nu^*G$ is $f$-related
to the open system $\psi^*(F_1,F_2)$.
\end{example}

\section{Networks of manifolds} \label{sec:networks}

The first result of this section shows that the networks of manifolds
defined in \cite{DL1} (hence the coupled cell networks of Golubitsky,
Stewart et al.) are a special case of the networks of open systems
(Definition~\ref{def:network}).  In particular they are morphisms in
the colored operad $\cO((\SSubi)^\op$.  We then show that Theorem~3 of
\cite{DL1} (which is the main result of that paper) is a direct
consequence of Theorem~\ref{thm:main} above.  In particular we show
that fibrations of networks of manifolds
(Definition~\ref{def:fibration}) give rise to maps of networks of open
systems in the sense of Definition~\ref{def:map_of_networks}.  To give
credit where it is due, Definition~\ref{def:map_of_networks} and
Theorem~9.3 were directly inspired by Theorem~3 of \cite{DL1}.  We
start by setting up notation, which differs somewhat from the notation
of \cite{DL1}.

\begin{definition} A finite directed {\sf graph} $G$ is a pair of
  finite sets $G_1$ (arrows/edges), $G_0$ (nodes/vertices) and two
  maps $s,t: G_1 \to G_0$ (source and target). We write: $G =
  \{G_1\toto G_0\}$.
 \end{definition}

 \begin{definition}\label{def:net.mfld} A {\sf network of manifolds}
is a pair $(G,\cP)$ where $G = \{G_1\toto G_0\}$ is a finite graph and
$\cP: G_0\to \Man$ is a list of manifolds (i.e., $
G_0\xrightarrow{\cP}\Man$ is an object of the category $\FinSet/\Man$).

 A {\sf map of networks of manifolds} $\varphi:(G, \cP) \to
   (G' , \cP')$ is a map of graphs $\varphi: G\to G'$ so that
   $\cP'\circ \varphi = \cP$.
\end{definition}
Recall that since the category of manifolds $\Man$ has finite
products, we have a functor
\[
\PP= \PP_\Man:  (\FinSet/\Man)^{op} \to \Man,
\]
which assigns to a list $\mu: X\to \Man$ the corresponding product:
\[
\PP (X\xrightarrow{\mu}\Man)= \prod_{a\in X} \mu(a)
\]
(cf.\ Section~\ref{sec:lists}). In particular to every network
$(G, \cP)$ of manifolds in the sense of Definition~\ref{def:net.mfld}
the functor $\PP$ assigns the manifold $\PP(\cP)= \PP(G_0\xrightarrow{\cP}
\Man)$ which we think of as the {\sf total phase space} of the
network. And to every map of networks of manifolds $\varphi: (G,\cP))
\to (G',\cP'))$ the functor $\PP$ assigns a map
\[
\PP(\varphi):\PP(\cP')\to \PP (\cP).
\]
between their total phase spaces.

\begin{proposition}\label{prop:5.5}
A  network of manifolds $(G,\cP)$ encodes 
\begin{enumerate}
\item a list of submersions $I: G_0\to \SSub$ (i.e., an object of
  $\FinSet/\SSub$) and
\item an interconnection morphism $\cI_G:(\PP_\Man(\cP) \xrightarrow{\id}
  \PP_\Man(\cP)) \to \PP_\SSub(I) $.
\end{enumerate}
Consequently a network of manifolds in the sense of
Definition~\ref{def:net.mfld} does give rise to a network of open
systems in the sense of Definition~\ref{def:network}.
\end{proposition}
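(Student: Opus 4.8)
The plan is to read the list of submersions and the interconnection morphism straight off the combinatorial data of $(G,\cP)$. For a node $v\in G_0$ write $E(v):=t^{-1}(v)\subseteq G_1$ for the set of edges with target $v$. I would define the submersion $I(v)$ to have total space
\[
I(v)_\tot:=\cP(v)\times\prod_{e\in E(v)}\cP(s(e)),
\]
state space $I(v)_\st:=\cP(v)$, and structure map $p_{I(v)}$ the projection onto the first factor. Since a product projection is a surjective submersion (indeed $I(v)$ is a trivial fiber bundle), $I\colon G_0\to\SSub$ is a well-defined object of $\FinSet/\SSub$. Intuitively $I(v)$ is the submersion whose state variable is the state at the node $v$ and whose input variables are the states at the sources of the edges feeding into $v$; when $E(v)=\varnothing$ the empty product is a point and $I(v)$ reduces to $\id_{\cP(v)}$, i.e.\ a closed system, as it should.

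Next I would unwind $\PP_\SSub(I)=\prod_{v\in G_0}I(v)$: its state space is $\prod_{v\in G_0}\cP(v)=\PP_\Man(\cP)$ and its total space is $\prod_{v\in G_0}I(v)_\tot$. I would then define $\cI_G$ by declaring its state component to be $\id_{\PP_\Man(\cP)}$ and its total component to be the ``duplicate-and-distribute'' map
\[
(\cI_G)_\tot\colon\PP_\Man(\cP)\to\prod_{v\in G_0}I(v)_\tot,\qquad
(x_w)_{w\in G_0}\longmapsto\bigl((x_v,(x_{s(e)})_{e\in E(v)})\bigr)_{v\in G_0},
\]
which copies the state $x_v$ into the state slot of the $v$-th factor and, for each edge $e$ into $v$, copies the state $x_{s(e)}$ of its source into the corresponding input slot of the $v$-th factor.

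It then remains to verify two things. First, that $\cI_G=((\cI_G)_\tot,(\cI_G)_\st)$ is a morphism of submersions in the sense of Definition~\ref{def:mor_ssub}: composing $(\cI_G)_\tot$ with the structure map of $\PP_\SSub(I)$ — that is, projecting each factor of $\prod_v I(v)_\tot$ onto its state space $\cP(v)$ via the $p_{I(v)}$ — returns $(x_v)_{v\in G_0}$, which is exactly $(\cI_G)_\st$ applied after $\id$, so the defining square commutes. Second, that $\cI_G$ is an interconnection morphism in the sense of Definition~\ref{def:intercon}: its state component is $\id_{\PP_\Man(\cP)}$, which is a diffeomorphism. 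Finally, by Definition~\ref{def:network} the pair $(I\colon G_0\to\SSub,\ \cI_G\colon\id_{\PP_\Man(\cP)}\to\PP_\SSub(I))$ is a network of open systems, which yields the last assertion.

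I do not expect a genuine obstacle here; the only point requiring care is purely notational — matching the indexing of the input slots of $I(v)$ by the incoming edges $E(v)$ against the description of $\PP_\SSub$ through its product projections $\pi_v$, so that the commuting square of Definition~\ref{def:mor_ssub} is checked factor by factor.
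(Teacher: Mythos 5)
Your proposal is correct and follows essentially the same construction as the paper: the submersion $I(v)$ is exactly the projection $\cP(v)\times\prod_{e\in t^{-1}(v)}\cP(s(e))\to\cP(v)$, and your ``duplicate-and-distribute'' map is precisely the paper's interconnection morphism, which the paper merely packages more functorially as $\PP_\Man$ applied to the map of finite sets $\bigsqcup_{a\in G_0}t^{-1}(a)\to G_0$ over $\Man$. Your explicit coordinate verification of the commuting square and of the interconnection condition replaces, with no loss, the paper's appeal to the universal properties of products and coproducts.
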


\begin{remark} Recall (Remark~\ref{rmrk:2.10}) that for any manifold
  $M$ the space $\Crl(M\xrightarrow{id}M)$ of control systems is the
  space $\scX(M)$ of vector fields on the manifold $M$.  Hence the
  interconnection map $\cI_G:(\PP_\Man(\cP) \xrightarrow{\id}
  \PP_\Man(\cP)) \to \PP_\SSub(I) $ gives rise to the linear map
\[
\cI_G^* :\Crl(\PP_\SSub(I)) \to \scX(\PP_\Man(\cP)) 
\]
from the space of the open systems on the product $\PP_\SSub(I)=\prod_{a\in
  G_0}I(a)$ of submersions to the space of vector fields on the manifold
$\PP_\Man(\cP)$.
 \end{remark}

\begin{remark}\label{rmrk:10.*}
  Recall that by Lemma~\ref{lem:5.14} a list of submersions $\tau:X\to
  \SSub$ gives rise to a canonical linear map $\Crl_\tau: \bigoplus
  _{x\in X}\Crl(\tau(x))\to \Crl(\prod_{x\in X} \tau(x)) $.
  Consequently for any network of manifolds $(G,\cP)$ we get a linear
  map
\[
\Crl_I: \bigoplus_{a\in G_0}\Crl(I(a)) \to \Crl(\PP_\SSub(I)),
\]
hence  the composite map 
\[
(\cI_G)^* \circ \Crl_I: \bigoplus_{a\in G_0}\Crl(I(a)) \to \scX(\PP_\Man(\cP).
\]
\end{remark}
\begin{proof}[Proof of \protect{Proposition~\ref{prop:5.5}}]
Given a node $a$ of a graph $G$ we associate two maps of finite sets:
\begin{itemize}
\item $\iota_a: \{a\} \hookrightarrow G_0$ and
\item $s|_{t\inv (a)}: t\inv(a) \to G_0$ (recall that $s, t:
  G_1\to G_0$ are the source and target maps of the graph $G$).
\end{itemize}
The set
$t\inv(a)$ is the collection of arrows of $G$ with target $a$ and $s$
sends this collection to the sources of the arrows. The composition
with $\cP: G_0 \to \Man$ gives us two lists of manifolds: 
\[
\cP\circ
\iota_a :\{a\}\to \Man
\] 
and 
\[
\xi_a:= \cP \circ s|_{t\inv (a)}: t\inv(a)
\to \Man.
\] 
Applying the functor $\PP$ gives us two manifolds: $\PP (\cP\circ
\iota_a)$, which is just $\cP(a)$, and $\PP (\xi_a)= \prod_{\gamma \in
  t\inv (a)} \cP(s(\gamma))$. We define the submersion $I(a)$ to be
the projection on the first factor %
\[
I(a):=(\cP(a)\times \PP (\xi_a) \xrightarrow{pr_1} \cP(a)).
\]
This gives us the desired list of submersions
\[
I:G_0\to\SSub, \qquad a\mapsto I(a).
\]
Since 
\[
I(a) = \left( \cP(a)\times \PP (t\inv (a)\xrightarrow{\xi_a} \Man)\to
\cP(a)\right),
\]
\[
\prod_{a\in G_0} I(a) =  \left( \prod_{a\in G_0}\cP(a)\times \prod_{a\in G_0} \PP (t\inv (a)\xrightarrow{\xi_a} \Man)\to
\prod_{a\in G_0}\cP(a)\right).
\]
By definition
\[
\PP_\SSub(I) = \prod_{a\in G_0} I(a).
\]
Therefore 
\[
\PP_\SSub(I) = \left((\PP(G_0\xrightarrow{\cP} \Man)\times
    (\PP(\bigsqcup{a\in G_0} \xrightarrow{\sqcup \xi_a} \Man)\to 
    (\PP(G_0\xrightarrow{\cP}\to \Man) \right).
\]
It follows that in order to construct an interconnection map 
\[
\cI_G:(\PP(\cP) \xrightarrow{\id}\PP(\cP)) \to\PP_\SSub(I) 
\]
 it suffices to construct a map 
\[
f: \PP_\Man(\cP) \to \prod_{a\in G_0} \PP_\Man(t\inv
(a)\xrightarrow{ \xi_a} \Man) .
\] 
This map $f$ too comes from a map of
finite sets. Namely, the family $\{s|_{t\inv (a)}:t\inv(a) \to
G_0\}_{a\in G_0}$ defines
\[
\sqcup s|_{t\inv (a)}: \bigsqcup _{a\in G_0}t\inv(a) \to G_0,
\]
and the diagram 
\[
\xy
(-15, 10)*+{ \bigsqcup t\inv(a) } ="1"; 
(15, 10)*+{G_0} ="2";
(0,-8)*+{\Man }="3";
{\ar@{->}_{\sqcup \xi_a} "1";"3"};
{\ar@{->}^{\sqcup (s|_{t\inv (a)})} "1";"2"};
{\ar@{->}^{\cP} "2";"3"};
\endxy 
\]
commutes. So set $f := \PP_\Man( \sqcup s|_{t\inv (a)})$.
\end{proof}

\begin{example} \label{ex:5.9}
Let $G$ be the graph 
\begin{center}
\tikzset{every loop/.style={min distance=15mm,in=-30,out=30,looseness=10}}
\begin{tikzpicture}[->,>=stealth',shorten >=1pt,auto,node distance=2cm,
  thick,main node/.style={circle,draw,font=\sffamily\bfseries}]
  \node[main node] at (2,-1) (1) {1};
  \node[main node] at (4,-1) (2) {2};
  \node[main node] at (6,-1) (3) {3};
  \path[]
  (1) edge [bend left] node {} (2)
  (2) edge [bend left] node [below] {} (1)
  (2) edge  node {}(3)  ;
\end{tikzpicture}.
\end{center}
Let $\cP$ be the function that assigns to every node the same manifold $M$.
Then 
\[
I(a) = (M\times M\xrightarrow{pr_1} M)
\]
for every $a\in G_0$, 
\[
\PP_\SSub( I) = (M\times M \to M)^3 \simeq ((M^2)^3 \to M^3)
\]
and the interconnection map $\cI_G: M^3 \to (M^2)^3
$ is given by
\[
\cI_G(x_1, x_2, x_3) = ((x_1,x_2),  (x_2, x_1), (x_3, x_2) )
\]
for all $(x_1, x_2, x_3) \in M^3$. Finally 
\[
(\cI_G)^* \circ
\Crl_{I}:\Crl (M\times M\to M)^{\oplus 3} \to \scX (M^3)
\]
(see Remark~\ref{rmrk:10.*}) is given by
\[
\left(\,(\cI_G)^* \circ \Crl_I)(w_1,w_2, w_3)\,\right)\, (x_1, x_2, x_3) =
\big(w_1(x_1, x_2), w_2(x_2,x_1), w_3(x_3,x_2)\big).
\]
\mbox{}\hfill $\Box$
\end{example}

In \cite{DL1, DL2} a class of maps of networks of manifolds was singled out.

\begin{definition}[Fibration of networks of manifolds]\label{def:fibration}
 A map $\varphi:G\to  G'$ of
  directed graphs is a {\sf graph fibration} if for any vertex $a$ of $ G$
  and any edge $e'$ of $ G'$ ending at $\varphi(a)$ (i.e., the target
  $t(e')$ of the edge $e'$ is 
  $\varphi(a)$) there is a
  unique edge $e$ of $ G$ ending at $a$ with $\varphi (e) = e'$.

  A map of networks of manifolds $\varphi:(G,\cP) \to
  (G',\cP')$ is a {\sf fibration} if $\varphi: G\to G'$ is a
  graph fibration.
\end{definition}

\begin{remark}
  A graph fibration $\varphi: G\to G'$ is in general neither injective
  nor surjective on vertices. However, for every vertex $a\in G_0$ it
  induces a bijection between the set $t\inv (a)$ of arrows of $G$
  with target $a$ and the set $t\inv (\varphi(a))$ of arrows of $G'$
  with target $\varphi(a)$.  
\end{remark}

The reason for singling out fibrations of networks of manifold is that
they give rise to maps of dynamical systems.  This is the main result
of \cite{DL1} and of \cite{DL2}. (In \cite{DL2} only the so called
``groupoid invariant'' vector fields were considered.  The requirement
of the groupoid symmetry turned out to be irrelevant and was dropped
in \cite{DL1}.)  As we have seen above, a network of manifolds is
a morphism in the operad $\cO((\SSubi)^\op)$ whose target happens to
be a fibration of the form $M\xrightarrow{id_M}M$.  With the benefit
of hindsight, the results of \cite{DL1} can be reformulated as the two
lemmas and the theorem below.

\begin{lemma}\label{lem:10.9}
Let $\varphi: (G,\cP) \to (G',\cP')$ be a fibration of networks of
manifolds.  Then for each vertex $a\in G_0$ we have isomorphism of
submersions
\[
\Phi(a): I(a) \to I'(\varphi(a)).
\]
Hence a fibration $\varphi$ gives rise to a morphism
\begin{equation}
\xy
(-10, 10)*+{G_0} ="1"; 
(10, 10)*+{G_0'} ="2";
(0,-2)*+{\SSub}="3";
{\ar@{->}_{I} "1";"3"};
{\ar@{->}^{\varphi} "1";"2"};
{\ar@{->}^{I'} "2";"3"};
{\ar@{=>}_{\scriptstyle \Phi} (4,6)*{};(-0.4,4)*{}} ; 
\endxy
\end{equation}
in the category $(\FinSet/\SSub)^\Leftarrow$.
\end{lemma}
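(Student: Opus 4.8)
The plan is to read off the isomorphisms $\Phi(a)$ from the bijection on incoming edges supplied by a graph fibration, and then to observe that, $G_0$ being a discrete category, a family $\{\Phi(a)\}_{a\in G_0}$ of such isomorphisms is already a $2$-cell in $(\FinSet/\SSub)^\Leftarrow$ with no further condition to check. First I would fix $a\in G_0$ and record the edge bijection. Since $\varphi$ is a morphism of graphs, $e\mapsto\varphi(e)$ carries $t\inv(a)$ into $t\inv(\varphi(a))$, and the defining property of a graph fibration says precisely that this assignment is a bijection $\varphi_a\colon t\inv(a)\xrightarrow{\sim}t\inv(\varphi(a))$, its inverse sending $e'$ to the unique $e\in t\inv(a)$ with $\varphi(e)=e'$. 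Next I would check that $\varphi_a$ is an isomorphism in $\FinSet/\Man$ between the lists $\xi_a=\cP\circ s|_{t\inv(a)}$ and $\xi'_{\varphi(a)}=\cP'\circ s|_{t\inv(\varphi(a))}$ of Proposition~\ref{prop:5.5}: for $e\in t\inv(a)$,
\[
\xi'_{\varphi(a)}(\varphi_a(e)) = \cP'\bigl(s(\varphi(e))\bigr) = \cP'\bigl(\varphi(s(e))\bigr) = \cP(s(e)) = \xi_a(e),
\]
using that $\varphi$ commutes with the source maps and that $\cP'\circ\varphi=\cP$. Applying the contravariant functor $\PP=\PP_\Man$ to $\varphi_a\inv$ then yields a diffeomorphism $\PP(\varphi_a\inv)\colon\PP(\xi_a)\xrightarrow{\sim}\PP(\xi'_{\varphi(a)})$.

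Now I would assemble $\Phi(a)$. Recall from Proposition~\ref{prop:5.5} that $I(a)=(\cP(a)\times\PP(\xi_a)\xrightarrow{pr_1}\cP(a))$ and, analogously, $I'(\varphi(a))=(\cP'(\varphi(a))\times\PP(\xi'_{\varphi(a)})\xrightarrow{pr_1}\cP'(\varphi(a)))$; moreover $\cP(a)=\cP'(\varphi(a))$ because $\cP'\circ\varphi=\cP$. Set
\[
\Phi(a)_\st := \id_{\cP(a)}, \qquad \Phi(a)_\tot := \id_{\cP(a)}\times\PP(\varphi_a\inv).
\]
Since $\Phi(a)_\tot$ acts as the identity on the first factor it intertwines the two projections $pr_1$, so $\Phi(a)=(\Phi(a)_\tot,\Phi(a)_\st)$ is a morphism of submersions $I(a)\to I'(\varphi(a))$, and it is an isomorphism because both components are. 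Finally, because $G_0$ is discrete the family $\{\Phi(a)\}_{a\in G_0}$ (or, if one prefers the orientation of the $2$-cell as drawn, the family of inverses $\{\Phi(a)\inv\colon I'(\varphi(a))\to I(a)\}$) is automatically a natural transformation $I'\circ\varphi\Rightarrow I$, hence gives the desired morphism $(\varphi,\Phi)$ in $(\FinSet/\SSub)^\Leftarrow$.

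The identities $\varphi\circ s=s\circ\varphi$ and $\cP'\circ\varphi=\cP$ are immediate from the definitions of a map of graphs and of a map of networks of manifolds, so I would not dwell on them. The only place where anything happens is the extraction of the bijection $\varphi_a$ from the graph fibration axiom and the bookkeeping of the variance of $\PP_\Man$ when transporting products along it; neither is a genuine obstacle, the lemma being in essence the functoriality of the construction of Proposition~\ref{prop:5.5} restricted to fibrations.
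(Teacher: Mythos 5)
Your proof is correct and follows essentially the same route as the paper's: extract the bijection $t\inv(a)\to t\inv(\varphi(a))$ from the fibration axiom, observe it is an isomorphism of lists over $\Man$ because $\cP'\circ\varphi=\cP$ and $\varphi$ commutes with sources, apply $\PP_\Man$ to get a diffeomorphism of products, and pair it with the identity on $\cP(a)=\cP'(\varphi(a))$ to get the isomorphism of submersions, the family then being a $2$-cell since $G_0$ is discrete. Your extra attention to the contravariance of $\PP_\Man$ and to the orientation of the components of $\Phi$ is a minor refinement of bookkeeping the paper glosses over (its components are isomorphisms, so the direction is harmless), not a different argument.
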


\begin{proof}
Since $\varphi: G\to G'$ is a graph fibration, the restriction 
\[
\varphi|_{t\inv (a)}: t\inv (a)\to t\inv (\varphi(a))
\]
is a bijection for each vertex $a$ of the graph $G$. Since $\cP'\circ
\varphi = \cP$
\[
\phi(a):= \varphi|_{t\inv (a)}:(t\inv (a)\xrightarrow{\xi_a} \Man ) \to 
(t\inv (\varphi(a))\xrightarrow{ \xi_{\varphi(a)}} \Man)
\]
is an isomorphism in $\FinSet/\Man$. Here, as in the proof of
Proposition~\ref{prop:5.5}, 
\[
\xi_a = \cP\circ s|_{t\inv (a)}
\]
and $\xi_{\varphi(a)}$ is defined similarly.
Consequently
\[
\PP_\Man(\varphi|_{t\inv (a)}):\PP_\Man(\xi_a ) \to \PP_\Man (t\inv (\xi_{\varphi(a)})
\]
is an isomorphism in $\Man$. Thus for each $a\in G_0$ we have an
isomorphism of surjective submersions
\[
\Phi(a):\left( \PP_\Man(\xi_a ) \times \cP(a) \to
  \cP(a)\right)\quad
\xrightarrow{\quad} %\\
\quad \left( \PP_\Man( \xi_{\varphi(a)} )
  \times \cP'(\varphi(a))\to \cP'(\varphi(a))
\right)
\]
(note that $\cP(a) = \cP'(\varphi(a))$). The family of isomorphisms
$\{\Phi(a)\}_{a\in G_0}$ together with the map $\varphi$ define a
morphism \[
\xy
(-10, 10)*+{G_0} ="1"; 
(10, 10)*+{G_0'} ="2";
(0,-2)*+{\SSub }="3";
{\ar@{->}_{I} "1";"3"};
{\ar@{->}^{\varphi} "1";"2"};
{\ar@{->}^{I'} "2";"3"};
{\ar@{=>}_{\scriptstyle \Phi} (4,6)*{};(-0.4,4)*{}} ; 
\endxy 
\]
in $(\FinSet/\SSub)^\Leftarrow$.
\end{proof}

\begin{example} \label{ex:10.11}
Let $\varphi\colon G\to G'$ be the graph fibration
\begin{center}
\tikzset{every loop/.style={min distance=15mm,in=-30,out=30,looseness=10}}
\begin{tikzpicture}[->,>=stealth',shorten >=1pt,auto,node distance=2cm,
  thick,main node/.style={circle,draw,font=\sffamily\bfseries}]

  \node[main node] at (2,-1) (1) {1};
  \node[main node] at (4,-1) (2) {2};
  \node[main node] at (6,-1) (3) {3};
  \node at (1,-1) (rb){};
  \node at (-1,-1) (lb){};
  \node[main node] at (-3,-1) (circ){*};

  \path[]
  (1) edge [bend left] node {} (2)
  (2) edge [bend left] node [below] {} (1)
  (2) edge  node {}(3)
  (circ) edge [loop right=20] node {} (circ)
  (rb) edge node [above] {$\varphi$} (lb) ;
\end{tikzpicture}.
\end{center}
Define $\cP': G_0'\to \Man$ by setting $\cP'(*) = M$ for some manifold
$M$ on the single vertex $*$ of $G'$. Define $\cP: G_0\to \Man$ by
setting $\cP(i) = M$ for $i=1,2,3$. Then 
\[
\varphi:(G, \cP)\to (G', \cP')
\]
is a fibration of networks of manifolds.  As in Example~\ref{ex:5.9} we have a list of submersions $I= I_G: G_0 \to \SSub$ with 
\[
I(a) = (M\times M\xrightarrow{pr_1} M)
\]
for every $a\in G_0$.% 

Similarly we have $I': G_0'= \{*\}\to \SSub$ given by 
\[
I'(*) = (M\times M\xrightarrow{pr_1} M).
\]
Tracing through the proof of Lemma~\ref{lem:10.9} we see that the maps 
\[
\Phi(a): I'(\varphi(a))\to I(a)
\]
of submersions are 
the identity maps for all $a\in G_0$.
Consequently the diagram
\[
\xy
(-10, 10)*+{G_0} ="1"; 
(10, 10)*+{G_0'} ="2";
(0,-2)*+{\SSub }="3";
{\ar@{->}_{I} "1";"3"};
{\ar@{->}^{\varphi} "1";"2"};
{\ar@{->}^{I'} "2";"3"};
{\ar@{=>}_{\scriptstyle \id} (4,6)*{};(-0.4,4)*{}} ; 
\endxy 
\]
commutes.
\end{example}

\begin{remark}\label{rmrk:graph}
  Note that for a fibration $\varphi: (G,\cP) \to (G',\cP')$ of
  networks of manifolds, the components $\Phi(a): I(a) \to
  I'(\varphi(a))$ of the natural transformation $\Phi:I'\circ \varphi
  \Rightarrow I$ are all isomorphisms of submersions.  Consequently
  the relations
\[
\Crl (\Phi(a)):\Crl(I'(\varphi(a))\tickar \Crl(I(a))
\]
are graphs of linear maps.  It follows from Lemma~\ref{lem:6.2} that
the relation
\[
\odot(\varphi, \Crl\circ \Phi):\bigoplus _{b\in G'_0} \Crl(I'(b) \tickar
\bigoplus _{a\in G_0}\Crl(I(a))
\]
is a graph of the linear map $\oplus(\varphi, \Crl\circ \Phi)$.
\end{remark}

\begin{remark}\label{rmrk:identify} 
We identify the category $\Man$ of manifolds with a subcategory
of the category $\SSub$ of surjective submersions.  Namely we identify
a manifold $M$ with the submersion $id_M:M\to M$ and a map $f:M\to N$
with the map of submersions $(f,f): (M\xrightarrow{id_M}M)\to
(N\xrightarrow{\id_N}N)$.  In particular given a morphism of lists of
manifolds
\[
\xy
(-10, 8)*+{X} ="1"; 
(10, 8)*+{Y} ="2";
(0,-4)*+{\Man}="3";
{\ar@{->}_{\tau} "1";"3"};
{\ar@{->}^{\varphi} "1";"2"};
{\ar@{->}^{\mu} "2";"3"};
\endxy
\]
we have a map of {\sf submersions}
\[
\PP(\varphi):\PP(\mu)\to \PP(\tau).
\]
\end{remark}

\begin{lemma}\label{lem:10.14}
 A fibration of networks of manifolds $\varphi: (G,\cP)
\to (G',\cP')$ gives rise to a map of networks of open systems
\begin{multline*}
((\varphi, id), \PP(\varphi) : (I':G'_0\to \SSub,
\cI_{G'}:\PP(\cP')\to \PP (I')) \to \\(I:G_0\to \SSub,
\cI_{G}:\PP(\cP)\to \PP( I)),
\end{multline*}
where, as in Remark~\ref{rmrk:identify} the category of manifolds is identified
with a subcategory of submersions.
\end{lemma}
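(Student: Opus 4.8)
The plan is to produce the two pieces of data required by Definition~\ref{def:map_of_networks} and then verify the one compatibility square it asks for. The morphism of lists of submersions is handed to us by Lemma~\ref{lem:10.9}: the fibration $\varphi$ determines a morphism $(\varphi,\Phi)$ in $(\FinSet/\SSub)^\Leftarrow$ connecting $(G_0\xrightarrow{I}\SSub)$ and $(G_0'\xrightarrow{I'}\SSub)$, whose components $\Phi(a)\colon I'(\varphi(a))\to I(a)$ are the isomorphisms of submersions constructed there --- the identity on the common state space $\cP(a)=\cP'(\varphi(a))$, and on total spaces the reindexing of a $t^{-1}(\varphi(a))$-tuple into a $t^{-1}(a)$-tuple along the bijection $\varphi|_{t^{-1}(a)}\colon t^{-1}(a)\to t^{-1}(\varphi(a))$ supplied by the graph fibration. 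For the horizontal edge of submersions I would take $f:=\PP_\Man(\varphi)$: because $\cP'\circ\varphi=\cP$, the vertex map is an arrow $(G_0\xrightarrow{\cP}\Man)\to(G_0'\xrightarrow{\cP'}\Man)$ of $\FinSet/\Man$, so $\PP_\Man$ turns it into a map $\PP_\Man(\cP')\to\PP_\Man(\cP)$, which by Remark~\ref{rmrk:identify} we read as a map of the trivial submersions $(\PP_\Man(\cP')\xrightarrow{\id}\PP_\Man(\cP'))\to(\PP_\Man(\cP)\xrightarrow{\id}\PP_\Man(\cP))$.

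With that in place the only real content is the commutativity of the square of Definition~\ref{def:map_of_networks}, i.e.\ the identity $\cI_G\circ\PP_\Man(\varphi)=\PP_\SSub(\varphi,\Phi)\circ\cI_{G'}$ of maps of submersions with target $\PP_\SSub(I)$. I would check it by postcomposing with each projection $\pi_a\colon\PP_\SSub(I)=\prod_{b\in G_0}I(b)\to I(a)$, which is enough since $\PP_\SSub(I)$ is a categorical product. On state spaces both composites reduce to $\PP_\Man(\varphi)$, so there is nothing to do there. On total spaces, feeding $(x'_b)_{b\in G_0'}$ through $\cI_{G'}$ yields, in the identification used in the proof of Proposition~\ref{prop:5.5}, the pair made of $(x'_b)_{b\in G_0'}$ itself and the edge-indexed tuple $(x'_{s(e')})_{e'\in G_1'}$; applying the defining relation $\pi_a\circ\PP_\SSub(\varphi,\Phi)=\Phi(a)\circ\pi_{\varphi(a)}$ of $\PP_\SSub$ on morphisms (see \eqref{eq:*3}) followed by the reindexing $\Phi(a)$ produces $\big(x'_{\varphi(a)},\,(x'_{s(\varphi(e))})_{e\in t^{-1}(a)}\big)$. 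On the other side, $\PP_\Man(\varphi)$ sends $(x'_b)_{b\in G_0'}$ to $(x'_{\varphi(a)})_{a\in G_0}$, and the explicit formula for $\cI_G$ then gives $\pi_a$-component $\big(x'_{\varphi(a)},\,(x'_{\varphi(s(e))})_{e\in t^{-1}(a)}\big)$. The two agree because $\varphi$ is a morphism of graphs, so $s\circ\varphi=\varphi\circ s$ on edges. The square is then a $2$-cell of $\SSub^\Box$ in the sense of Definition~\ref{def:ssub_box}, since $\cI_G$ and $\cI_{G'}$ are interconnection morphisms (Proposition~\ref{prop:5.5}) while $\PP_\SSub(\varphi,\Phi)$ and $\PP_\Man(\varphi)$ are maps of submersions; hence $((\varphi,\Phi),\PP_\Man(\varphi))$ is a map of networks of open systems.

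The argument carries no conceptual surprise, so the main obstacle is organizational: one must keep straight the identification $\Man\hookrightarrow\SSub$ together with the three products in play --- $\PP_\Man(\cP)$ indexed by $G_0$, each $\PP_\Man(\xi_a)$ indexed by $t^{-1}(a)$, and $\PP_\SSub(I)=\prod_{b\in G_0}I(b)$ --- without conflating index sets, and recognize that the single non-formal ingredient is precisely the compatibility of $\varphi$ with source maps that is built into the notion of a graph morphism.
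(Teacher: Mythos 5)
Your proposal is correct and takes essentially the same route as the paper: the morphism of lists is the one supplied by Lemma~\ref{lem:10.9}, the horizontal edge is $\PP_\Man(\varphi)$ viewed via the embedding $\Man\hookrightarrow\SSub$, and the only real content is the commutativity $\cI_G\circ\PP_\Man(\varphi)=\PP_\SSub(\varphi,\Phi)\circ\cI_{G'}$, which in both arguments rests precisely on $\varphi$ commuting with the source maps of the graphs. The only difference is stylistic: the paper assembles the bijections $\varphi|_{t\inv(a)}$ into a single map of coproducts $\Psi$ and concludes by the universal properties of products after applying $\PP_\Man$, while you verify the same identity componentwise by postcomposing with the projections $\pi_a$ and evaluating on points.
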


\begin{proof}  By Lemma~\ref{lem:10.9} the fibration $\varphi$ gives rise to a map 
to a morphism
\[
\xy
(-10, 10)*+{G_0} ="1"; 
(10, 10)*+{G_0'} ="2";
(0,-2)*+{\SSub}="3";
{\ar@{->}_{I} "1";"3"};
{\ar@{->}^{\varphi} "1";"2"};
{\ar@{->}^{I'} "2";"3"};
{\ar@{=>}_{\scriptstyle \Phi} (4,6)*{};(-0.4,4)*{}} ; 
\endxy
\]
in the category $(\FinSet/\SSub)^\Leftarrow$.  Consequently we have a
map of submersions
\[
\PP(\varphi, \Phi): \PP(I') = \prod_{b\in G_0'} I'(b)\to \prod_{a\in G_0} I(a) =\PP(I).
\]
We need to check that the diagram
\[
\xy
(-18,8)*+{ \prod_{b\in G'_0} I'(b)}="1";
(18,8)*+{ \prod_{a\in G_0} I(a))}="2";
(-18,-10)*+{ \PP(\cP')}="3";
(18,-10)*+{\PP(\cP)}="4";
{\ar@{->}^{\PP(\varphi, \Phi)} "1";"2"};
{\ar@{->}^{\cI'} "3";"1"};
{\ar@{->}_{\cI} "4";"2"};
{\ar@{->}_{\PP(\varphi)} "3";"4"};
\endxy.
\]
commutes in the category $\SSub$ of submersions, hence defines a
2-cell in the double category $\SSub^\Box$.

Since $\varphi$ is a fibration of networks of manifolds, for
each node $a$ of the graph $G$ we have a commuting square in
$\FinSet/\Man$:
\begin{equation}\label{eq:square2}
\xy
(-10,8)*++{t\inv(a)}="1";
(30,8)*++{G_0}="2";
(-10,-8)*++{t\inv (\varphi(a))}="3";
(30,-8)*++{G'_0}="4";
(14,-24)*++{\Man}="5";
%(24,-20)*++{N}="6";
{\ar@{->}^{s|_{t\inv(a)}} "1";"2"};
{\ar@{->}_{\phi(a)} "1";"3"};
{\ar@{->}^{\varphi} "2";"4"};
{\ar@{->}^{s|_{t\inv(\varphi(a))}} "3";"4"};
{\ar@{->}^<<<<<{\xi_a  = \cP\circ s|_{t\inv (a)}
\quad} "1";"5"};
{\ar@{->}_<<<<<{\cP} "2";"5"};
%{\ar@{->}_{h} "5";"6"};
{\ar@{->}_{\xi_{\varphi(a)} } "3";"5"};
{\ar@{->}^{\cP'} "4";"5"};
\endxy .
\end{equation}
Here as before $\xi_a = \cP\circ s|_{t\inv (a)}$ and $\xi_{\varphi(a)}
= \cP' \circ s|_{t\inv (\varphi(a))}$.  We now drop the maps to $\Man$
to reduce the clutter and only keep track of maps of finite sets. By
the universal property of coproducts the diagrams \ref{eq:square2}
define a unique map
\[
\Psi: \bigsqcup_{a\in G_0} t\inv (a) \to \bigsqcup_{b\in G'_0} t\inv (b)
\]
in $\FinSet/\Man$ 
so that the diagram
\[
\xy
(-20, 10)*+{t\inv(a)\quad}="1"; 
(20,10)*+{ \bigsqcup_{x\in G_0} t\inv (x)}="2";
(65,10)*+{G_0}="22";
(-20, -10)*+{t\inv(\varphi(a))\quad}="3"; 
(20,-10)*+{\bigsqcup_{b\in G'_0} t\inv (b)}="4";
(65,-10)*+{G_0'}="24";
{\ar@{->}^{ }"1";"2"};
{\ar@{->}_{\phi(a)} "1";"3"};
{\ar@{->}_{} "3";"4"};
{\ar@{->}^{\Psi} "2";"4"};
{\ar@{->}^{\varphi} "22";"24"};
{\ar@{->}_{\sqcup_{b\in G'_0} s|_{t\inv(b)}} "4";"24"};
{\ar@{->}^{\sqcup_{a'\in G_0} s|_{t\inv(a')}} "2";"22"};
\endxy
\]
commutes for all nodes $a\in G_0$. Applying the functor $\PP= \PP_\Man$ gives
the commuting diagram in $\Man$:
\[
\xy
(15,10)*+{ \PP( \bigsqcup_{a\in G_0} t\inv (a))}="2";
(65,10)*+{\PP(G_0)}="22";
(15,-10)*+{ \PP(\bigsqcup_{b\in G'_0} t\inv (b))}="4";
(65,-10)*+{\PP(G_0')}="24";
{\ar@{<-}_{\PP(\Psi)} "2";"4"};
{\ar@{<-}^{\PP (\varphi)} "22";"24"};
{\ar@{<-}_{\qquad\PP(\sqcup_{b\in G'_0} s|_{t\inv(b)})} "4";"24"};
{\ar@{<-}^{\qquad\PP(\sqcup_{a'\in G_0} s|_{t\inv(a')})} "2";"22"};
\endxy.
\]
The fact that $\PP$ takes coproducts to products and the universal
properties ensure that
\[
(\PP(\Psi)\times \PP (\varphi), \PP (\varphi)) = \PP_\SSub(\varphi, \Phi).
\]
\end{proof}

The main result of \cite{DL1} % 
can now be stated as follows.
\begin{theorem}[\protect{\cite[Theorem~3]{DL1}}]\label{thm3ofDL1}
  A fibrations of networks of manifolds $\varphi:(G, \cP) \to
  (G',\cP')$ gives rise to a 2-cell
\begin{equation}\label{eq:10.17}
\begin{tikzcd}[sep=large]
\bigoplus_{b\in G_0'} \Control(I'(b))\ar[r,tick,"{\odot(\varphi, \Crl\circ \Phi)}"]
	\ar[d,"{(\cI_{G'})^*\circ \Crl_{I'}}"']
&
	\bigoplus _{a\in G_0}\Control(I(a))\ar[d,"{ (\cI_G)^*\circ \Crl_{I}}"]
\\
	\scX(\PP_\Man(\cP'))\ar[r,tick,"\scX(\PP_\Man(\varphi))"']
&
	\scX(\PP_\Man(\cP))\ar[ul,phantom,"\Box"]	
\end{tikzcd}
\end{equation}
in $\RelVect$.  Here as before $\Phi: I'\circ \varphi\Rightarrow I$ is
the natural transformation of Lemma~\ref{lem:10.9}.

Consequently 
for every family open systems
$(w_b)_{b\in G_0'}\in \bigoplus_{b\in G_0'} \Control(I'(b))$ we have a
map of dynamical systems
\begin{multline}
\PP(\varphi): \left((\PP_\Man(\cP'), ((\cI_G')^*\circ
  \Crl_{I'})\,((w_b))\right) \xrightarrow{\quad} \\
\left(
(\PP_\Man(\cP),
((\cI_G)^*\circ \Crl_{I}\circ \oplus(\varphi, \Crl\circ \Phi))\,((w_b))
\right)
\end{multline}
(q.v.\ Remark~\ref{rmrk:graph}).
\end{theorem}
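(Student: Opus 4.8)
The plan is to obtain \eqref{eq:10.17} by feeding the map of networks of open systems supplied by Lemma~\ref{lem:10.14} into Theorem~\ref{thm:main}, and then to translate the resulting $2$-cell of $\RelVect$ into the asserted morphism of dynamical systems. Essentially all of the genuine work has already been carried out in Lemmas~\ref{thm:5.9}, \ref{lemma:crl_ssub_relvect} and~\ref{lem:10.14}; what remains for this theorem is identification and unwinding.

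First I would invoke Lemma~\ref{lem:10.14}: the fibration $\varphi\colon (G,\cP)\to(G',\cP')$ produces a map of networks of open systems whose underlying morphism of lists of submersions is the morphism $(\varphi,\Phi)$ of Lemma~\ref{lem:10.9} and whose state-space component is $\PP_\Man(\varphi)$, the category $\Man$ being viewed inside $\SSub$ as in Remark~\ref{rmrk:identify}. Applying Theorem~\ref{thm:main} to this map of networks yields a $2$-cell in $\RelVect$ of the shape \eqref{eq:9}: its top edge is the relation $\odot(\varphi,\Crl\circ\Phi)$, its vertical edges are the linear maps $(\cI_{G'})^{*}\circ\Crl_{I'}$ and $(\cI_{G})^{*}\circ\Crl_{I}$, and its bottom edge is $\Crl$ applied to the map of submersions determined by $\PP_\Man(\varphi)$. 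Next I would identify this $2$-cell with \eqref{eq:10.17}. In this map of networks the submersions playing the roles of $b$ and $c$ in Definition~\ref{def:map_of_networks} are the identity submersions $\PP_\Man(\cP)\xrightarrow{\id}\PP_\Man(\cP)$ and $\PP_\Man(\cP')\xrightarrow{\id}\PP_\Man(\cP')$, so by Remark~\ref{rmrk:2.10} their spaces of open systems are the vector-field spaces $\scX(\PP_\Man(\cP))$ and $\scX(\PP_\Man(\cP'))$; moreover $\Crl$ applied to the map of submersions $(\PP_\Man(\varphi),\PP_\Man(\varphi))$ between these identity submersions is by construction the linear relation of $\PP_\Man(\varphi)$-related vector fields, which is exactly the relation denoted $\scX(\PP_\Man(\varphi))$ in \eqref{eq:10.17}. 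Making these substitutions, together with the index identifications $X=G_0$, $Y=G_0'$, $\tau=I$, $\mu=I'$, $\psi=\cI_G$ and $\nu=\cI_{G'}$, turns the $2$-cell of Theorem~\ref{thm:main} into \eqref{eq:10.17} verbatim.

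Finally, for the ``consequently'' clause I would unwind what it means for \eqref{eq:10.17} to be a $2$-cell of $\RelVect$, namely that the pair of vertical linear maps carries $\odot(\varphi,\Crl\circ\Phi)$ into the relation $\scX(\PP_\Man(\varphi))$. By Remark~\ref{rmrk:graph} (which rests on Lemma~\ref{lem:6.2} and the fact that each $\Phi(a)$ is an isomorphism of submersions) the relation $\odot(\varphi,\Crl\circ\Phi)$ is the graph of the linear map $\oplus(\varphi,\Crl\circ\Phi)$. Hence for every $(w_b)_{b\in G_0'}\in\bigoplus_{b\in G_0'}\Crl(I'(b))$ the pair $\bigl((w_b),\,\oplus(\varphi,\Crl\circ\Phi)((w_b))\bigr)$ lies in $\odot(\varphi,\Crl\circ\Phi)$, and therefore
\[
\left(\ \bigl((\cI_{G'})^{*}\circ\Crl_{I'}\bigr)((w_b))\ ,\ \ \bigl((\cI_G)^{*}\circ\Crl_{I}\circ\oplus(\varphi,\Crl\circ\Phi)\bigr)((w_b))\ \right)\ \in\ \scX\bigl(\PP_\Man(\varphi)\bigr),
\]
i.e.\ the first vector field on $\PP_\Man(\cP')$ is $\PP_\Man(\varphi)$-related to the second vector field on $\PP_\Man(\cP)$. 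Spelling out the definition of $f$-relatedness of vector fields, this is exactly the assertion that $\PP_\Man(\varphi)$ is a morphism of dynamical systems from $\bigl(\PP_\Man(\cP'),\,((\cI_{G'})^{*}\circ\Crl_{I'})((w_b))\bigr)$ to $\bigl(\PP_\Man(\cP),\,((\cI_G)^{*}\circ\Crl_{I}\circ\oplus(\varphi,\Crl\circ\Phi))((w_b))\bigr)$, which is the displayed conclusion. The one point requiring care is keeping the variance consistent — $\PP$ and the morphisms of lists are contravariant — so that \eqref{eq:10.17} is read with the orientation dictated by Lemmas~\ref{lem:10.9} and~\ref{lem:10.14}; beyond that I anticipate no real obstacle, the analytic content having been discharged already in Section~\ref{sec:7}.
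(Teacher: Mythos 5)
Your proposal is correct and follows the paper's own argument: invoke Lemma~\ref{lem:10.14} to get the map of networks of open systems, apply Theorem~\ref{thm:main} to obtain the 2-cell \eqref{eq:10.17}, and then use Remark~\ref{rmrk:graph} (that $\odot(\varphi,\Crl\circ\Phi)$ is the graph of $\oplus(\varphi,\Crl\circ\Phi)$) to unwind the 2-cell condition into the statement that the two vector fields are $\PP_\Man(\varphi)$-related. Your extra identification of the identity submersions with vector-field spaces and of the bottom relation with $\scX(\PP_\Man(\varphi))$ is just a more explicit spelling-out of what the paper leaves implicit.
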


\begin{proof}
By Lemma~\ref{lem:10.14} a fibration of networks of manifolds $\varphi: (G,\cP)
\to (G',\cP')$ gives rise to a map of networks of open systems
\begin{multline*}
((\varphi, id), \PP(\varphi) : (I':G'_0\to \SSub,
\cI_{G'}:\PP(\cP')\to \PP (I')) \to \\(I:G_0\to \SSub,
\cI_{G}:\PP(\cP)\to \PP( I)),
\end{multline*}  
By Theorem~\ref{thm:main} the map of networks of open
systems give rise to the 2-cell \eqref{eq:10.17} in the double
category $\RelVect$.

By Remark~\ref{rmrk:graph} the relation $\odot(\varphi, \Crl\circ
\Phi)$ is the graph of the linear map $\oplus (\varphi, \Crl\circ
\Phi)$.  The fact that \eqref{eq:10.17} is 2-cell in $\RelVect$
translates into the conditions that for any tuple $(w_b)_{b\in
  G_0'}\in \bigoplus_{b\in G_0'} \Control(I'(b))$ the vector fields
$(\cI_{G'}^*\circ \Crl_{I'})\,((w_b))$ and $(\cI_G)^*\circ
\Crl_{I}\circ \oplus(\varphi, \Crl\circ \Phi))\,((w_b))$ are
$\PP(\varphi)$-related.
\end{proof}
\begin{example}
We illustrate Theorem~\ref{thm3ofDL1} by considering  the fibration $\varphi:(G, \cP)\to (G', \cP')$ of networks of manifold of 
 Example~\ref{ex:10.11}:
\begin{center}
\tikzset{every loop/.style={min distance=15mm,in=-30,out=30,looseness=10}}
\begin{tikzpicture}[->,>=stealth',shorten >=1pt,auto,node distance=2cm,
  thick,main node/.style={circle,draw,font=\sffamily\bfseries}]

  \node[main node] at (2,-1) (1) {1};
  \node[main node] at (4,-1) (2) {2};
  \node[main node] at (6,-1) (3) {3};
  \node at (1,-1) (rb){};
  \node at (-1,-1) (lb){};
  \node[main node] at (-3,-1) (circ){*};

  \path[]
  (1) edge [bend left] node {} (2)
  (2) edge [bend left] node [below] {} (1)
  (2) edge  node {}(3)
  (circ) edge [loop right=20] node {} (circ)
  (rb) edge node [above] {$\varphi$} (lb) ;
\end{tikzpicture}.
\end{center}
with $\cP'(*) = M$ for some manifold $M$ and with $\cP(a) = M$ for all
$a\in G_0=\{1,2,3\}$.  As we have seen in Example~\ref{ex:10.11} the list
$I:G_0\to \SSub$ is given by
\[
I(a) = (M\times M\xrightarrow{pr_1} M)
\]
for every $a\in G_0$ and $I': G_0'= \{*\}\to \SSub$ given by 
\[
I'(*) = (M\times M\xrightarrow{pr_1} M).
\]
Consequently 
\[
\PP_\SSub( I) = (M\times M \to M)^3 \simeq
((M \times M)^3 \to M^3).
\]
The interconnection map $\cI_G:  M^3 \to (M^2)^3 $ is given
by
\[
\cI_G (x_1, x_2, x_3) = ((x_1,x_2),  (x_2, x_1), (x_3, x_2) )
\]
for all $(x_1, x_2, x_3) \in M^3$.
The map  
\[
(\cI_G)^* \circ \Crl_{I}:\Crl (M\times M\to
M)^{\oplus 3} \to \scX (M^3)
\]
 is given by
\[
\left((\cI_G)^* \circ \Crl_{(G_0, I)})(w_1,w_2, w_3)\right)\, (x_1, x_2, x_3) =
\big(w_1(x_1, x_2), w_2(x_2,x_1), w_3(x_3,x_2)\big).
\]
Finally $(\cI_G)^* \circ \Crl_I: \Crl (M\times M\to
 M)^{\oplus 3} \to \scX (M^3)$ is given by
\[
\left((\cI_G)^* \circ \Crl_I)(w_1,w_2, w_3)\right)\, (x_1, x_2, x_3) =
\big(w_1(x_1, x_2), w_2(x_2,x_1), w_3(x_3,x_2)\big)
\]
for all $(x_1,x_2, x_3)\in M^3$ and all $(w_1,w_2, w_3)\in \Crl
(M\times M\to M)^{\oplus 3}$.  

On the other hand
\[
\PP_\SSub(I') = (M\times M \to M),
\]
and the interconnection map  $\cI_{G'}:  M \to M^2 $ is given by
\[
\cI_{G'} (x) = (x,x)
\]
for all $x\in M$.  Consequently 
\[
(\cI_{G'})^* \circ \Crl_{I'}: \Crl (M\times M\to M) \to \scX (M)
\] 
is given by 
 \[
\left((\cI_{G'})^* \circ \Crl_{I'}(w)\right)\, (x) =w(x, x).
 \]
 for all $w\in \Crl(M\times M\to M) = \oplus _{b\in G_0'}\Crl(I'(b))$
 and all $x\in M$.

  Observe first that 
\[
\PP_\SSub(\varphi, \id): (M\times M \to M) \to (M\times M \to M)^3 
\]
is the diagonal map.  Next note that the linear map
\[
\oplus(\varphi, \Crl\circ \id  ):
\Crl (M\times M\to M) \to \Crl (M\times M\to M) ^{\oplus 3}
\]
(the graph of which is the relation $\odot(\varphi, \Crl\circ \id  )$)
is given by 
\[
\oplus(\varphi, \Crl\circ \id  ) (w ) = (w, w, w).
\]
By Theorem~\ref{thm3ofDL1} for any open
system $w\in\Crl (M\times M\to M)$ the vector fields
\[
v:M\to TM, \qquad v(x) := w(x,x)\quad (=\left((\cI_{G'})^* \circ \Crl_{I'}(w)\right)\, (x))
\]
and 
\begin{eqnarray*}
u: M^3 \to TM^3, \qquad u(x_1, x_2, x_3) 
&:=& ((w(x_1, x_2), w(x_2,x_1), w(x_3,x_2))\\
\Big(&=&\left(\right((\cI_{G})^* \circ \Crl_{I} \circ 
\oplus(\varphi, \Crl\circ id)\left)\,(w)\right)\, (x_1, x_2, x_3))\,\Big)
\end{eqnarray*}
are $\PP_\Man (\varphi)$-related.  The fact that $v$ is $\PP_\Man
(\varphi)$-related to $u$ for any choice of an open system $w\in
\Crl(M\times M\to M)$ can also be checked directly.
\end{example}

\end{document}